\numberwithin{equation}{section}
\newcommand{\ul}[1]{\underline{#1}}
\DeclareMathOperator{\Spec}{Spec}
\DeclareMathOperator*{\colim}{colim}
\DeclareMathOperator{\tr}{tr}
\DeclareMathOperator{\fib}{fib}
\DeclareMathOperator{\Frac}{Frac}
\DeclareMathOperator{\trdeg}{trdeg}
\DeclareMathOperator{\gr}{gr}
\DeclareMathOperator{\Fil}{Fil}
\DeclareMathOperator{\THH}{THH}
\DeclareMathOperator{\HH}{HH}
\DeclareMathOperator{\TR}{TR}
\DeclareMathOperator{\TC}{TC}
\DeclareMathOperator{\dlog}{dlog}
\newcommand{\red}{\mathrm{red}}
\newcommand{\SCR}{\mathrm{SCR}}
\newcommand{\op}{\mathrm{op}}
\newcommand{\Poly}{\mathrm{Poly}}
\newcommand{\Zar}{\mathrm{Zar}}
\newcommand{\N}{{\mathbb N}}
\newcommand{\bbL}{\mathbb{L}}
\newcommand{\Z}{{\mathbb Z}}
\newcommand{\F}{\mathbb{F}}
\newcommand{\p}{\mathfrak{p}}
\newcommand{\q}{\mathfrak{q}}
\newcommand{\fm}{\mathfrak{m}}
\newcommand{\sh}{\mathrm{sh}}
\DeclareMathOperator{\height}{ht}
\newcommand{\cO}{\mathcal{O}}
\newcommand{\cF}{\mathcal{F}}
\DeclareMathOperator{\pdim}{\mathit{p}-dim}
\newcounter{zaehler}
\theoremstyle{plain}
\newtheorem{introthm}[zaehler]{Theorem}
\newtheorem{introquestion}[zaehler]{Question}
\theoremstyle{plain}
\newtheorem{thm}{Theorem}[section]
\newtheorem{lem}[thm]{Lemma}
\newtheorem{lemma}[thm]{Lemma}
\newtheorem{cor}[thm]{Corollary}
\newtheorem{prop}[thm]{Proposition}
\newtheorem*{conj*}{Conjecture}
\theoremstyle{definition}
\newtheorem{ex}[thm]{Example}
\newtheorem{defn}[thm]{Definition}
\newtheorem{claim}[thm]{Claim}
\newtheorem{rmk}[thm]{Remark}
\title{Towards Vorst's conjecture in positive characteristic}
\author{Moritz Kerz}
\email{moritz.kerz@ur.de}
\address{Fakult\"at f\"ur Mathematik, Universit\"at Regensburg, 93040 Regensburg, Germany}
\author{Florian Strunk}
\email{florian.strunk@ur.de}
\address{Fakult\"at f\"ur Mathematik, Universit\"at Regensburg, 93040 Regensburg, Germany}
\author{Georg Tamme}
\email{georg.tamme@ur.de}
\address{Fakult\"at f\"ur Mathematik, Universit\"at Regensburg, 93040 Regensburg, Germany}
\thanks{The authors are supported by the DFG through CRC 1085 \textit{Higher Invariants} (Universit\"at Regensburg). }
\date{\today}
\begin{document}

\begin{abstract}
Vorst's conjecture relates the regularity of a ring with the $\mathbb{A}^1$-homotopy invariance of its $K$-theory.
We show a variant of this conjecture in positive characteristic.
\end{abstract}

\maketitle

\section{Introduction}

A commutative unital ring $A$ is called $K_n$-regular if the canonical map $K_n(A)\to
K_n(A[X_1,\ldots,X_m])$ is an isomorphism for all positive integers $m$.
By~\cite[Cor.~2.1]{Vorst} a $K_{n+1}$-regular ring is also $K_n$-regular. 
It is well known that a regular noetherian ring is $K_{n}$-regular  for all $n$.
In \cite{Vorst} Vorst conjectured the following partial converse.

\begin{conj*}[Vorst]
Let $k$ be a field, and let $A$ be essentially of finite type over $k$.
If $A$ is $K_{\dim(A)+1}$-regular, then $A$ is regular.
\end{conj*}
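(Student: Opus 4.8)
The characteristic-zero case of the conjecture is the theorem of Cortiñas, Haesemeyer and Weibel, so the plan is to transport their argument to characteristic $p>0$. Their proof combines a descent mechanism for $K$-theory along resolutions of singularities with a computation, via Goodwillie's theorem, of the fiber of the map from $K$-theory to homotopy $K$-theory $KH$ in terms of negative cyclic homology. Only the rational part of $K$-theory is visible to the second ingredient, and it has no useful analogue over $\F_p$; the idea is to replace it by topological cyclic homology $\TC$ and by pro-cdh descent for $K$-theory. Since base change to the perfect closure does not preserve regularity, this is a genuine restriction, I would work under the hypothesis that $k$ is perfect; and since the descent mechanism still consumes resolution of singularities, the plan would give an unconditional proof only in the dimensions where resolution is available (in particular $\dim A\le 3$) and a reduction of the conjecture to resolution of singularities in general.

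The formal reductions are standard. Since $K_n$-regularity localizes and descends in $n$ (Vorst), it suffices to show that a local ring $A$ essentially of finite type over $k$, with $d:=\dim A$ and $A$ being $K_{d+1}$-regular, is regular, and I would induct on $d$. For $d=0$ this is immediate: a nonzero nilpotent $x\in A$ gives $1+xX\in A[X]^\times$, witnessing $K_1(A)\ne K_1(A[X])$, so $K_1$-regularity forces $A$ reduced, hence (being Artinian local) a field. In general $K_1$-regularity again forces $A$ reduced and then $K_0$-regularity forces $A$ seminormal (Traverso). Finally, using the vanishing $K_{-i}(A)=0$ for $i>d$ — Weibel's conjecture, now a theorem in all characteristics (Kerz–Strunk–Tamme) — one upgrades $K_{d+1}$-regularity to the statement that $K_n(A)\to KH_n(A)$ is an isomorphism for every $n\le d$. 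The problem thus becomes a detection statement: if $A$ is not regular, then $K_n(A)\ne KH_n(A)$ for some $n\le d$.

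For this I would set up the descent machinery: $KH$ satisfies cdh descent; $K$-theory satisfies pro-cdh descent along abstract blow-up squares (Kerz–Strunk–Tamme); and the infinitesimal $K$-theory $K^{\inf}=\fib(K\to\TC)$ is nil-invariant and cdh-local, being a truncating invariant (Land–Tamme). Choosing a resolution of singularities $\pi\colon\tilde X\to X=\Spec A$, an isomorphism away from a closed $Z\subsetneq X$ with $\dim Z\le d-1$, one obtains a cdh-cartesian square for $KH$ and a pro-cdh-cartesian square for $K$ attached to $\pi$; the discrepancy between them is governed by $\fib(K\to KH)$ on the four corners. It vanishes on the regular $\tilde X$, and on $Z$, on the exceptional locus, and on their infinitesimal thickenings it is accessible by induction on dimension — the inductive statement being the low-degree vanishing of (a $\TC$-refinement of) this fiber for all essentially-finite-type schemes of bounded dimension, which needs no $K$-regularity hypothesis. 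A diagram chase should then express $\fib(K(A)\to KH(A))$ through lower-dimensional cdh-data of the exceptional locus of $\pi$; comparing with the $d$-connectivity coming from $K_{d+1}$-regularity forces that locus to be empty, so $\pi$ is an isomorphism and $A$ is regular. The final step, concluding regularity rather than merely ``no exceptional locus'', I would close with a positive-characteristic criterion such as Kunz's (flatness of the Frobenius).

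The part I expect to be genuinely hard, and where characteristic $p$ parts ways with Cortiñas–Haesemeyer–Weibel, is the integral detection of singularities together with the bookkeeping of the non-reduced thickenings appearing in the pro-systems. Rationally the detection is their explicit computation of $\fib(K\to KH)$ via cyclic homology and differential forms; integrally over $\F_p$ I would route it through the cyclotomic trace, using the $p$-complete agreement of relative $K$-theory and relative $\TC$ for henselian pairs (Clausen–Mathew–Morrow, Dundas–Morrow) to control the thickenings $\{A/I^r\}_r$, pro-cdh descent for $\TC$ for the geometry, the de Rham–Witt complex (or syntomic cohomology) to make the relevant groups explicit on regular schemes, and descent along de Jong–Gabber $\ell'$-alterations — which split after inverting $\ell$ by a transfer argument — for the prime-to-$p$ part where ordinary resolution is unavailable. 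Getting nil-invariance of $K^{\inf}$, pro-cdh descent for $\TC$, and $\ell'$-alteration descent with torsion coefficients to interlock so as to propagate the low-degree statement across the resolution square and one dimension higher is, I believe, the crux of the whole argument, and it is presumably where the hypotheses on $k$ and, in higher dimension, on resolution of singularities are consumed.
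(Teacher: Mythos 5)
Your proposal runs along the Geisser--Hesselholt axis that the paper is precisely trying to supersede: descent along a resolution of singularities (or $\ell'$-alterations), cyclotomic trace, pro-cdh descent for $K$, nil-invariance of $K^{\inf}$ --- and, as you note yourself, this route stays conditional on resolution in high dimension. The paper's decisive move is to eliminate resolution altogether. It runs cdh descent for $KH/p$ over the filtered system of \emph{all} modifications $X'\to X$, identifies the colimit with cohomology of the Zariski--Riemann space $ZR(X)$, and reduces the vanishing of $KH_i(X;\Z/p)$ for $i>\pdim(X)$ to a statement about the stalks of $ZR(X)$, which are valuation rings. The new input is then Theorem~C, $K_i(V;\Z/p)=0$ for $i>\pdim(\Frac V)$ for any valuation ring $V$ in characteristic $p$, proved via Gabber's Cartier isomorphism for valuation rings (the appendix), the resulting derived-functor and torsion-freeness statements for $W_n\Omega^*_V$, Hesselholt's computation of $\TR$, and the Clausen--Mathew--Morrow injectivity of the cyclotomic trace. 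No pro-cdh descent for $K$, no $K^{\inf}$, no alteration transfer and no conditional hypothesis appear; those are exactly the ingredients the valuation-ring picture renders unnecessary.

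Two concrete problems with your sketch. First, the detection step is not there: ``comparing with the $d$-connectivity forces the exceptional locus to be empty,'' followed by an appeal to Kunz's criterion, produces no contradiction, because nothing in your setup manufactures a nonzero $K$-class to play against the vanishing. What the paper supplies (Lemma~4.1) is the explicit statement that for $B=k[x_1,\dots,x_r]/(x_1,\dots,x_r)^2$ and $p$-independent $y_1,\dots,y_s\in k$, the Steinberg symbol $\{y_1,\dots,y_s,1+x_1,\dots,1+x_r\}$ has nonzero image in $K_{s+r}(B)/p$, detected by the Dennis trace in Hochschild homology; lifting along $A\twoheadrightarrow A/\fm^2\cong B$ and confronting this with the $KH$-vanishing forces the embedding dimension $r$ to satisfy $\pdim(k)+r\le\pdim(k)+d$, hence $r\le d$. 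Kunz's criterion plays no role. Second, restricting to perfect $k$ does not dispose of the $p$-dimension: the induction localizes and completes, and the residue fields $k(\p)$ of intermediate primes are not perfect even when $k$ is. The invariant $\pdim$ is exactly what keeps the inductive bookkeeping coherent through these reductions; without it, or some equivalent refinement, the induction you describe does not close.
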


The case $\dim(A)=0$ is easy and the case $\dim(A)=1$ was shown by Vorst in \cite[Thm.~3.6]{Vorst}.
For fields $k$ of characteristic zero, Corti\~nas, Haesemeyer, and Weibel proved the conjecture in \cite[Thm.~0.1]{CHW-Vorst}.
Geisser and Hesselholt in~\cite{GH-vorst} proved the conjecture for $A$ of finite type
over a  perfect field $k$ of positive characteristic assuming resolution
of singularities. 

In order to formulate our results, we introduce the $p$-dimension of an $\F_p$-algebra $A$.
This number is defined as
\[
\pdim(A) = \sup \{  \pdim(k(\p)) + \height(\p) \, | \, \p \subset A \text{ prime ideal}\}
\]
where $\pdim(k(\p))$ is the $p$-rank of the residue field $k(\p)$, see Section~\ref{section:p-dim} for details.
In general $\pdim(A)\geq \dim(A)$ and equality holds for instance if $A$ is of finite type over a perfect field. 

\begin{introthm}\label{introthm:A}
Let $A$ be an excellent noetherian $\F_p$-algebra such that $[k(x):k(x)^p]<\infty$ for all points $x\in\Spec(A)$.
If $A$ is $K_{\pdim(A)+1}$-regular, then $A$ is regular.
\end{introthm}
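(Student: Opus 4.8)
The plan is to reduce the statement to a local question and then exploit $K$-regularity via the $cdh$-topology and a control on negative $K$-theory, following the strategy of Corti\~nas--Haesemeyer--Weibel but replacing resolution of singularities by alterations. First I would reduce to the case where $A$ is local: regularity can be checked at each point, and $K_n$-regularity localizes in the sense that if $A$ is $K_{n}$-regular then so is each localization $A_{\mathfrak p}$ (this uses that $K$-theory of the polynomial rings commutes with the relevant filtered colimit, plus Vorst's descending implication $K_{n+1}\text{-reg}\Rightarrow K_n\text{-reg}$); along the way one checks that $\pdim$ can only drop under localization, so the hypothesis is inherited. Thus I may assume $A$ is an excellent noetherian local $\F_p$-algebra of Krull dimension $d$ with $[k(x):k(x)^p]<\infty$ for all $x$, and $d':=\pdim(A)\ge d$; I want to show that $A$ is regular given that it is $K_{d'+1}$-regular.

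The core input is the relation between $K_n$-regularity and the $cdh$-descent obstruction. Let $\mathbb K$ denote the nonconnective $K$-theory spectrum and $\mathbb K^{cdh}$ its $cdh$-sheafification; write $\mathcal F$ for the fiber of $\mathbb K\to\mathbb K^{cdh}$, the ``$cdh$-local-to-global'' fiber term. The key facts I would assemble are: (i) Weibel's vanishing / the theorem that $K_n(A)=0$ for $n<-\dim A$ and that $\mathbb K^{cdh}$ agrees with $\mathbb K$ on regular schemes; (ii) that $A$ being $K_n$-regular forces $\pi_i\mathcal F=0$ in a range of degrees $\le n$, because $\mathcal F$ is built from $\mathbb A^1$-nonhomotopy-invariant pieces — concretely, one uses that $K^{\mathbb A^1}:=L_{\mathbb A^1}\mathbb K$ satisfies $cdh$-descent (Corti\~nas--Haesemeyer--Weibel / Kerz--Strunk--Tamme), so the fiber $\mathcal F$ agrees with the fiber of $K^{\mathbb A^1}\to\mathbb K$ up to a shift in the relevant range, and $K_n$-regularity of $A$ says precisely that $K_n(A)\to K_n^{\mathbb A^1}(A)$ behaves well. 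Then a descent spectral sequence for the $cdh$-topology on $\Spec A$, whose $E_2$-term involves cohomology of the sheaves $a_{cdh}\pi_j\mathcal F$ in cohomological degrees $\le \dim A\le d'$, shows that $K_{d'+1}$-regularity forces $\mathcal F$ to vanish in high enough degrees near the closed point.

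To make the $cdh$-descent usable in positive characteristic without resolution of singularities, I would invoke de Jong's theory of alterations together with the $cdh$- (or $\ell$dh-/eh-)descent results that are available unconditionally: the point is that the singular locus can be resolved by a proper, generically finite \emph{alteration} of degree prime to $p$ after inverting nothing, or by iterated blow-ups along regular centers plus finite covers, and the term $\mathcal F$ — being an infinitesimal/birational-type invariant — is insensitive to the prime-to-$p$ degree issues in the relevant range because of the control $[k(x):k(x)^p]<\infty$ and the definition of $\pdim$. Here the hypothesis $[k(x):k(x)^p]<\infty$ and the use of $\pdim$ rather than $\dim$ enter essentially: they guarantee that the relevant differential forms $\Omega^\bullet$ and hence the $cdh$-cohomological dimension relevant to $\mathcal F$ is bounded by $\pdim(A)$, so that the descent spectral sequence truncates at the correct spot and $K_{\pdim(A)+1}$-regularity is exactly enough.

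Putting it together: $K_{d'+1}$-regularity of $A$ implies, via the spectral sequence and the dimension bound $d'=\pdim(A)$, that the canonical map $K_n(A)\to K_n^{cdh}(A)$ is an isomorphism for $n=d',d'+1$; but $\mathbb K^{cdh}$ is computed by $cdh$-hypercovers of $\Spec A$ by regular schemes (alterations), on which $K$-theory is $\mathbb A^1$-invariant, so $\mathbb K^{cdh}(A)$ is itself $\mathbb A^1$-invariant, forcing $A$ to be $K_{d'}$-regular ``$cdh$-locally.'' The final step is a local algebra argument: an excellent noetherian local $\F_p$-algebra of dimension $d$ that is $K_{d}$-regular (equivalently, for which the $cdh$-obstruction $\mathcal F$ vanishes through degree $d$) must be regular — this is where one shows the vanishing of $\pi_{-d}\mathcal F$ or of the appropriate relative $\Omega$-term detects singularity, reducing to the known cases $\dim\le 1$ of Vorst's theorem by Noetherian induction on the singular locus via the $cdh$-blow-up squares. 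I expect the main obstacle to be precisely this last extraction — controlling the $cdh$-sheafified fiber term $\mathcal F$ in positive characteristic well enough to detect non-regularity without resolution, i.e.\ showing that $\mathcal F$ (or its homotopy sheaves) is a nonzero obstruction exactly when $A$ is singular, using alterations and the finiteness $[k(x):k(x)^p]<\infty$ as substitutes for the char-$0$ cyclic-homology computations of Corti\~nas--Haesemeyer--Weibel.
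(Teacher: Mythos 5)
Your proposal diverges from the paper's proof in two places, both of which are genuine obstructions rather than just a different route.

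First, the replacement of resolution of singularities by alterations does not work here, because the whole argument in the paper is carried out \emph{mod $p$}. De Jong's alterations can be chosen of degree prime to a fixed prime $\ell\neq p$, and they do give descent-type control on $KH[1/p]$ or $KH/\ell$, but they give no control on $KH/p$: a proper generically finite cover of degree prime to $p$ (or indeed any alteration) does not allow one to transfer $p$-torsion information. What the paper actually does (Proposition~\ref{prop:vanishing}) is take the inverse limit over all \emph{modifications} (proper birational maps), i.e.\ the Zariski--Riemann space, whose stalks are valuation rings. The vanishing $KH_i(X;\Z/p)=0$ for $i>\pdim(X)$ is then reduced to the vanishing of $K_i(V;\Z/p)$ for valuation rings $V$, which is Theorem~\ref{introthm:C}; this in turn rests on the cyclotomic trace being injective (Clausen--Mathew--Morrow), on a computation of $\TC$ via de Rham--Witt forms, and ultimately on the Cartier isomorphism for valuation rings from the appendix. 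None of this has an analogue in your alteration picture, and the ``$cdh$-cohomological dimension bounded by $\pdim$'' heuristic does not appear: the bound comes from the concrete vanishing $W_n\Omega^i_F=0$ for $i>\pdim(F)$ over the stalk fields $F$.

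Second, and more seriously, the last step of your proposal is circular. You write that the final ingredient is ``a local algebra argument: an excellent noetherian local $\F_p$-algebra of dimension $d$ that is $K_d$-regular\ldots must be regular,'' to be obtained ``by reducing to the known cases $\dim\le 1$.'' But this is essentially the theorem you are trying to prove, and the paper does not deduce it by induction reducing to the $\dim\le1$ case. The actual detection mechanism for the singularity is the explicit non-vanishing result Lemma~\ref{lem:non-vanishing}: for $B=k[x_1,\dots,x_r]/(x_1,\dots,x_r)^2$ and $p$-independent $y_1,\dots,y_s\in k$, the symbol $\{y_1,\dots,y_s,1+x_1,\dots,1+x_r\}$ survives in $K_{s+r}(B)/p$, proved by computing its Dennis trace in $\HH_*(B/\F_p)\cong\Omega^*_k\otimes_{\F_p}\HH_*(B_0/\F_p)$ and invoking the Geisser--Hesselholt computation for $B_0=\F_p[x]/(x)^2$. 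Combined with the reduction to a complete local ring (via Thomason--Trobaugh excision, using the inductive hypothesis to make the punctured spectrum regular) and the vanishing $KH_{e+1}(A;\Z/p)=0$ for $e=\pdim(A)$, one obtains both $K_{e+1}(A;\Z/p)=0$ and $K_{\pdim(k)+r}(A;\Z/p)\neq 0$, forcing the embedding dimension $r$ to equal $\dim(A)$. This pincer argument is the heart of the proof and is absent from your plan; without it the ``local algebra argument'' has no content.
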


In particular, this implies the result of Geisser and Hesselholt mentioned above without assuming resolution of singularities.
The theorem indicates that the condition that $A$ be essentially of finite type over a field is not necessary for the conjecture to hold.
In fact, using the result of Corti\~nas--Haesemeyer--Weibel, we show the following generalization in characteristic zero.

\begin{introthm}\label{introthm:B}
Let $A$ be an excellent noetherian ring of characteristic zero. If $A$ is
$K_{\dim(A)+1}$-regular, then $A$ is regular.
\end{introthm}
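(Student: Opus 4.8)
The plan is to deduce the theorem from the Corti\~nas--Haesemeyer--Weibel resolution of Vorst's conjecture \cite{CHW-Vorst} by carrying out its proof over an excellent base. The geometric input of \cite{CHW-Vorst} --- Hironaka's resolution of singularities --- is available for every excellent scheme over $\mathbb{Q}$ by Temkin's desingularization theorem, and the homotopical inputs --- cdh-descent for homotopy $K$-theory $KH$, the vanishing of negative $K$-theory, and the cdh-descent of infinitesimal $K$-theory --- hold for all noetherian schemes of finite Krull dimension, the last of these (via Corti\~nas' excision theorem for $\mathbb{Q}$-algebras) once resolution of singularities is available.

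First I would reduce to $A$ local, using that regularity is a local property and that $K_n$-regularity is inherited by localizations, up to a harmless shift of the index (cf.\ \cite{CHW-Vorst}). Write $d=\dim(A)$. For $d\le 1$ the statement is due to Vorst \cite[Thm.~3.6]{Vorst}, so I argue by induction on $d$: applying the inductive hypothesis to the localizations $A_{\mathfrak p}$ with $\dim A_{\mathfrak p}<d$ shows that $A$ is regular away from a closed subset of dimension $<d$, and then, after the reduction along the normalization and its conductor carried out in \cite{CHW-Vorst}, one may assume $A$ normal. Now the hypothesis enters: by Vorst's implication $K_{n+1}$-regular $\Rightarrow K_n$-regular \cite{Vorst}, $A$ is $K_n$-regular for all $n\le d+1$; feeding this into the spectral sequence that computes $KH(A)$ from the iterated $NK$-groups of $A$, and using that $K_n(A[t_1,\dots,t_m])=0$ for $n<-(d+m)$, one obtains that $K_n(A)\to KH_n(A)$ is an isomorphism for $n\le d+1$, i.e.\ the fibre $\mathcal F(A):=\fib(K(A)\to KH(A))$ is $(d+1)$-connective.

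Next comes the geometric heart, which I would run as in \cite{CHW-Vorst}. Using cdh-descent for $KH$, the cdh-descent of infinitesimal $K$-theory, and the Goodwillie comparison, the fibre $\mathcal F(A)$ agrees with the cdh-fibre of periodic cyclic homology $HP(A/\mathbb{Q})$; through the $SBI$-sequences and the rational Hochschild--Kostant--Rosenberg decomposition this controls the cdh-fibres of the derived exterior powers $\wedge^i L_{A/\mathbb{Q}}$ of the cotangent complex. A resolution of singularities $\pi\colon\widetilde X\to X=\Spec(A)$ exists by Temkin's theorem; since $\mathbb{Q}$ is perfect and $\widetilde X$ is regular, $\widetilde X$ is geometrically regular over $\mathbb{Q}$, so $L_{\widetilde X/\mathbb{Q}}$ and its exterior powers are flat sheaves concentrated in degree $0$, whose cdh-cohomology vanishes in cohomological degrees $>d$ by Grothendieck vanishing. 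Playing this boundedness against the $(d+1)$-connectivity of $\mathcal F(A)$ forces $X$ to have cdh-fibrant structure sheaf and cdh-fibrant differential forms $\Omega^{\bullet}_{A/\mathbb{Q}}$; combined with the normality of $A$, the concluding lemma of \cite{CHW-Vorst} then gives that $A$ is regular.

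The main obstacle, I expect, is precisely this geometric step. The cdh-descent of infinitesimal $K$-theory and the final differential-forms lemma of \cite{CHW-Vorst} were established for schemes essentially of finite type over a field of characteristic zero, using Hironaka's resolution; they must be reproven over an excellent $\mathbb{Q}$-base with Temkin's resolution in place of Hironaka's, the chief technical nuisance being that $\Omega^1_{A/\mathbb{Q}}$ and the exterior powers of $L_{A/\mathbb{Q}}$ are only quasi-coherent --- not coherent --- when $A$ is not essentially of finite type over a field, so that the finiteness arguments of \cite{CHW-Vorst} must be rephrased (for instance in terms of the cotangent complex directly, or after passing to the completions $\widehat{A}_{\mathfrak m}$ via the Cohen structure theorem, using that $A\to\widehat{A}_{\mathfrak m}$ is regular). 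Granting this, together with the connectivity bookkeeping and the vanishing of negative $K$-theory in the excellent noetherian setting, the argument proceeds as in \cite{CHW-Vorst}.
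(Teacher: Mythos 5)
Your proposal runs into a real gap at the geometric heart. You propose to re-run the Corti\~nas--Haesemeyer--Weibel argument over an excellent $\mathbb{Q}$-base, replacing Hironaka's resolution by Temkin's, but the ingredients of \cite{CHW-Vorst} that you invoke --- cdh-descent for infinitesimal $K$-theory and for periodic cyclic homology, the comparison with the cdh-fibres of $\wedge^i \bbL_{-/\mathbb{Q}}$, and the concluding differential-forms criterion for regularity --- are all established in \emph{loc.\ cit.} only for schemes essentially of finite type over a characteristic-zero field, and the coherence of $\Omega^1$ and of the Hochschild complex is used at several points. You acknowledge that these ``must be reproven'' and ``rephrased'', but that re-proof \emph{is} the missing content; ``granting this'' is not granting a technical nuisance but the main step, and the argument as written is therefore conditional. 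Since $A$ need not be essentially of finite type over any field, you cannot simply cite \cite{CHW-Vorst}, and you have not actually closed the gap between Temkin's resolution and the cyclic-homology machinery it is supposed to feed.

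The paper avoids all of this with a much shorter reduction. After the same initial steps (localize; induct on dimension so the singular locus is at most $\{\fm\}$; pass to the completion via a Thomason--Trobaugh excision diagram as in \eqref{thm.comdiag}), it picks a coefficient field $k\subset A$ by Cohen's theorem and invokes Hironaka--Artin algebraization \cite[Thm.~3.8]{Artin}: because $A$ is complete with at most an isolated singularity, it is the $\fm$-adic completion of a finite-type $k$-algebra $R$ that is regular away from $\fm$, with $\dim R=\dim A$. A second excision diagram transfers $K_{\dim(A)+1}$-regularity from $A$ to $R$, and then \cite[Thm.~0.1]{CHW-Vorst} applies verbatim to $R$; regularity of $R$ gives regularity of $A=\widehat R_\fm$. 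In short, the paper \emph{algebraizes} the isolated singularity to land back in the already-proven finite-type case, rather than attempting to generalize the cdh and cyclic-homology internals of \cite{CHW-Vorst} to excellent bases. If you want a self-contained proof, that is the route to take.
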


We also prove the following result for curves  in mixed characteristic. This seems to be
the first result of that form in mixed characteristic.

\begin{introthm}\label{introthm:mixed-char}
Let $A$ be an excellent noetherian ring with $\dim(A)\leq 1$ such that $A/\fm$ is perfect of characteristic $p> 2$ for every maximal ideal $\fm \subset A$.
If $A$ is $K_{2}$-regular, then $A$ is regular.
\end{introthm}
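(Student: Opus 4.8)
My plan is to follow the pattern of the proof of Theorem~\ref{introthm:A}, carried out $p$-adically and rationally in parallel and using $\dim(A)\le 1$ to keep both parts manageable.

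\medskip
\emph{Setup.} By \cite[Cor.~2.1]{Vorst} a $K_2$-regular ring is $K_1$- and $K_0$-regular, and $K_1$-regularity forces $A$ to be reduced (a nonzero nilpotent $a$ would give the class $1+at\in A[t]^{\times}$, nontrivial in $NK_1(A)$). Since $A$ is excellent its normalization $B=\tilde A$ is a finite $A$-algebra, and $B$, being normal noetherian of dimension $\le 1$, is regular, hence $K_n$-regular for all $n$. I must show $A=B$, so assume not and let $I\subsetneq A$ be the conductor, a nonzero ideal of both $A$ and $B$. Then $\Spec(A/I)$ is the (finite, closed) non-normal locus of $\Spec(A)$, so $A/I$ and $B/I$ are Artinian with perfect residue fields of characteristic $p>2$, and the conductor square
\[
\begin{tikzcd}
A \ar[r]\ar[d] & B \ar[d] \\
A/I \ar[r] & B/I
\end{tikzcd}
\]
is a Milnor square, and geometrically an abstract blow-up (cdh) square.

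\medskip
\emph{Reduction to a birelative statement.} Put $F=\fib(K\to KH)$. Since $KH$ satisfies cdh descent and is nil-invariant, the conductor square becomes cartesian after applying $KH$, so the total fibre of the square obtained by applying $F$ is the birelative $K$-theory $K(A,B,I)$ of the Milnor square; moreover $F(B)=0$ ($B$ regular), and $F(A/I)$, $F(B/I)$ are the relative $K$-theories of the nilpotent extensions $A/I\to(A/I)_{\red}$, $B/I\to(B/I)_{\red}$ onto the reduced (hence regular) quotients, whence
\[
K(A,B,I)\ \simeq\ \fib\!\Big(F(A)\longrightarrow \fib\big(F(A/I)\to F(B/I)\big)\Big).
\]
Next, $K_2$-regularity of $A$ gives $\pi_jF(A)=0$ for $j\le 1$, while the birelative $K$-theory of any Milnor square vanishes in degrees $\le 0$; feeding both into the fibre sequence shows $\pi_j\fib(F(A/I)\to F(B/I))=0$ for $j\le 1$. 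In degree $1$, $\pi_1F(R)=1+\mathrm{Nil}(R)$ for Artinian $R$ with perfect residue fields ($SK_1$ vanishes, and $K_2$ surjects onto $K_2$ of the reduction by Matsumoto's theorem and liftability of Steinberg symbols), so $\pi_0\fib(F(A/I)\to F(B/I))=0$ forces $1+\mathrm{Nil}(A/I)\twoheadrightarrow 1+\mathrm{Nil}(B/I)$, i.e.\ $\mathrm{Nil}(B/I)\subset A/I$. As $I$ is the largest $B$-ideal contained in $A$, this means $I$ is a radical ideal of $B$; hence $A/I$ and $B/I$ are reduced products of perfect fields, $F(A/I)=F(B/I)=0$, and $K(A,B,I)\simeq F(A)$.

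\medskip
\emph{The final step.} It now suffices to show that $A\neq B$ still forces $\pi_1K(A,B,I)=\pi_1F(A)\neq 0$, i.e.\ that such a non-normal $A$ is \emph{not} $K_1$-regular. I would compute the birelative $K$-theory of the conductor square by trace methods: $p$-adically it agrees with birelative topological cyclic homology (Geisser--Hesselholt), rationally with birelative cyclic homology (Corti\~nas), and via the birelative topological Hochschild homology of the square it is governed by the cotangent complex of the square, which fails to patch because the pushout of rings defining the square is not flat ($B$ is not flat over $A$); this defect is nonzero in the relevant degree precisely because $A$ is not normal. The hard part will be making this last computation — of relative $K_2$, equivalently relative $\TC$, and of the Bloch--van der Kallen--Stienstra-type description of the low-degree groups in terms of differential forms — work in mixed characteristic, and it is exactly here that $p=2$ behaves exceptionally, which is the reason for the hypothesis $p>2$.
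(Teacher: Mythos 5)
Your opening reductions (reducing to $A$ reduced via $K_1$-regularity, passing to the normalization $B$, forming the conductor square) are in the same spirit as the paper, and your observation that the surjectivity $1+\mathrm{Nil}(A/I)\twoheadrightarrow 1+\mathrm{Nil}(B/I)$ forces the conductor $I$ to be radical in $B$ (hence $A$ seminormal) is a nice reduction that the paper does not make explicitly. However, the approach then diverges, and the divergence is exactly where the gap opens up.

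The paper does \emph{not} try to compute the birelative $K$-theory $K(A,B,I)$ of the conductor square. Instead it (i) reduces to $A$ complete local with \emph{algebraically closed} residue field by passing to the strict henselization (van der Kallen's theorem on $NK_n$ under \'etale maps and excellence of $A^{\sh}$) and then to the completion (Thomason--Trobaugh excision as in diagram~\eqref{thm.comdiag}); (ii) proves the global vanishing $K_2(A)/p=0$, where the key input, after reducing via cdh descent and Corollary~\ref{cor:vanishing-on-valuation-rings} to $K_2(F)/p$ for $F=\Frac(\tilde A)$, is Merkurjev--Suslin together with Lang's theorem ($F$ is $(C_1)$, hence of cohomological dimension $\leq 1$); and (iii) derives a contradiction by exhibiting a specific nonzero class in $K_2(A)/p$, distinguishing the cases $p\in\fm^{2}$ (reduces to the equal-characteristic symbol argument of Lemma~\ref{lem:non-vanishing}) and $p\notin\fm^{2}$ (constructs a surjection $A/\fm^2\twoheadrightarrow B(k)=W_2(k)[\epsilon]/(p\epsilon)$, identifies $K_2(B(k))$ with Milnor $K$-theory using van der Kallen, and checks via the $\dlog$ map of Proposition~\ref{prop:dlog} that $\{1+p,1+\epsilon\}$ survives in $W_n\Omega^2_{B(k)}/p$ — which is Proposition~\ref{sec6:nonvanishing}, whose proof is a substantial direct de Rham--Witt computation).

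The genuine gap in your proposal is the ``final step''. You reduce to showing that $A\ne B$ forces a nonzero birelative $K$-group in a low degree, and then write that you ``would compute'' this via birelative $\TC$ and birelative cyclic homology and that ``the hard part will be making this last computation work in mixed characteristic''. But this is not a plan, it is a restatement of the problem: the nonvanishing you need in low degree is precisely what occupies the entire Section 5 of the paper (Lemmas~\ref{lem:Witt-vectors-dual-numbers}--\ref{lem:deRham-Witt-dual-numbers}, Propositions~\ref{prop:non-vanishing-in-dRW} and~\ref{sec6:nonvanishing}), and the cotangent-complex/non-flatness heuristic you offer does not give it. In particular, your global conductor-square setup discards the two structural ingredients the paper's nonvanishing argument actually uses: the coefficient ring $W(k)$ of the complete local ring (Cohen), which produces the surjection onto $B(k)$, and the algebraic closedness of $k$ (from strict henselization), which is needed both for the abstract-blow-up descent argument ($Y'_{\red}$ a product of $\Spec(k)$'s) and for Lang's theorem when proving $K_2(F)/p=0$. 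You also never use the hypothesis $p>2$ in a concrete place — you gesture at it at the end, but in the paper it enters through the $\dlog$-map (Proposition~\ref{prop:dlog}) and the explicit de Rham--Witt computation. As it stands, the proposal sets up a promising frame but stops short of the computation that constitutes the actual content of the theorem.
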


These results motivate the following question, already asked similarly by Vorst in~\cite{Vorst}.

\begin{introquestion}
  Let $A$ be an excellent noetherian ring which is $K_{\dim(A)+1}$-regular. Is $A$
  necessarily regular?
  \end{introquestion}

The proof of Theorem~\ref{introthm:mixed-char} is based on the $\dlog$-map to (absolute) de
Rham--Witt forms and  calculations of Hesselholt--Madsen~\cite{HesselholtMadsen-drW}.  
The proof of Theorem~\ref{introthm:A} essentially follows the strategy of
Geisser--Hessel\-holt. We replace resolution of singularities by an argument involving
the Zariski--Riemann space of $\Spec(A)$. This forces us to study $K$-theory of valuation
rings. In fact, we use the following vanishing result for the $K$-theory of valuation rings.

\begin{introthm}\label{introthm:C}
Let $V$ be a valuation ring of characteristic $p$ with field of fractions $F$.
Then $K_{i}(V; \Z/p) = 0$ for $i > \pdim(F)$.
\end{introthm}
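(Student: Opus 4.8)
We may assume $d:=\pdim(F)<\infty$, for otherwise there is nothing to prove; then $[F:F^{p}]=p^{d}$ and $\Omega^{1}_{F/\F_{p}}$ is a $d$-dimensional $F$-vector space. The plan is to compute $K(V;\Z/p)$ through its syntomic (``motivic'') filtration, exploiting that a valuation ring of characteristic $p$ is \emph{Cartier smooth}, and to identify the graded pieces with logarithmic de Rham--Witt forms, which vanish above degree $d$.

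First I would establish that $\Omega^{n}_{V/\F_{p}}=0$ for all $n>d$. By Gabber--Ramero the $V$-module $\Omega^{1}_{V/\F_{p}}$ is flat, and $\Omega^{1}_{V/\F_{p}}\otimes_{V}F=\Omega^{1}_{F/\F_{p}}$ has dimension $d$ over $F$. Presenting the flat module $\Omega^{1}_{V/\F_{p}}$ as a filtered colimit of finite free $V$-modules (Lazard), the transition maps become zero after applying $\Lambda^{d+1}$ --- their generic fibres vanish while their targets are torsion free --- so $\Omega^{n}_{V/\F_{p}}=\Lambda^{n}_{V}\Omega^{1}_{V/\F_{p}}=0$ for $n>d$. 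In particular the logarithmic de Rham--Witt sheaf $\nu(n)=W_{1}\Omega^{n}_{V,\log}$, a subsheaf of $\Omega^{n}_{V/\F_{p}}$, vanishes for $n>d$.

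Next, recall that on $\F_{p}$-algebras $K(-;\Z/p)$ carries a complete, multiplicative filtration with $\Fil^{0}=K(-;\Z/p)$ and $n$-th graded piece the mod $p$ syntomic complex, $\gr^{n}K(R;\Z/p)\simeq\Z/p(n)(R)[2n]$; this is obtained, e.g., by transporting the Bhatt--Morrow--Scholze/Antieau--Mathew--Morrow--Nikolaus filtration on $\TC(-;\Z/p)$ along the cyclotomic trace, which for $\F_{p}$-algebras is an equivalence in positive homological degrees. Since $V$ is Cartier smooth, its syntomic complex has the classical shape $\Z/p(n)(V)\simeq R\Gamma_{\Zar}(\Spec V,\nu(n))[-n]$ (the Cartier isomorphism collapses the conjugate-filtration computation to logarithmic forms, as in Geisser--Levine); and as $V$ is local, global sections over $\Spec V$ are exact on abelian Zariski sheaves, so $\Z/p(n)(V)\simeq\nu(n)(V)[-n]$ and $\gr^{n}K(V;\Z/p)\simeq\nu(n)(V)[n]$, concentrated in homological degree $n$. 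By the previous paragraph this vanishes for $n>d$; hence, by completeness, the filtration on $K(V;\Z/p)$ is finite, its nonzero graded pieces $\nu(0)(V),\,\nu(1)(V)[1],\,\dots,\,\nu(d)(V)[d]$ sitting in pairwise distinct homological degrees. Therefore the associated spectral sequence has no differentials and no extension problems, and $K_{i}(V;\Z/p)\cong\nu(i)(V)$ for $0\le i\le d$, while $K_{i}(V;\Z/p)=0$ for $i>d=\pdim(F)$.

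The substance is to supply the two facts about $V$ used here: the Cartier smoothness of valuation rings of characteristic $p$ --- the flatness of $\Omega^{1}_{V/\F_{p}}$ (Gabber--Ramero) together with the Cartier isomorphism --- and the availability of the syntomic filtration with the de Rham--Witt description $\Z/p(n)(V)\simeq\nu(n)(V)[-n]$ of its graded pieces for the non-noetherian, not necessarily quasisyntomic ring $V$. For the latter one argues by quasisyntomic descent, or by continuity of $K(-;\Z/p)$ reducing to smooth $\F_{p}$-algebras (where $\gr^{n}K(R;\Z/p)\simeq R\Gamma_{\Zar}(\Spec R,\nu(n))[n]$ is the classical Geisser--Levine statement and the finiteness of $\pdim(F)$ bounds the differential forms occurring in the colimit); one also uses the comparison between mod $p$ $K$-theory and $\TC$ in positive degrees for $\F_{p}$-algebras.
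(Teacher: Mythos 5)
Your approach is genuinely different from the paper's: you go through the syntomic (``motivic'') filtration on $K(-;\Z/p)$, identify the graded pieces with logarithmic de~Rham--Witt forms using Cartier smoothness, and read off the vanishing; this is in essence the strategy of Kelly--Morrow, whom the paper cites as having independently proved the finer statement $K_{i}(V;\Z/p^{r})\cong W_{r}\Omega^{i}_{V,\log}$. The paper instead computes the pro-groups $\{\TR^{n}_{i}(V;p)\}$ directly via derived de~Rham--Witt (using the Cartier isomorphism and $L W_{n}\Omega^{i}_{V}\simeq W_{n}\Omega^{i}_{V}[0]$), deduces $\TC_{i}(V;\Z/p)=0$ for $i>\pdim(F)$, and then uses only the \emph{injectivity} of the cyclotomic trace for local rings (Clausen--Mathew--Morrow, Theorem~D) to pull the vanishing back to $K$-theory. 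Your preliminary observation that $\Omega^{n}_{V}=0$ for $n>\pdim(F)$ is correct (torsion-freeness plus vanishing of the generic fibre does it), though the paper avoids needing it by working directly with $W_{n}\Omega^{i}_{F}$.

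The gap in your argument sits exactly where it matters: the assertion that the cyclotomic trace $K(-;\Z/p)\to\TC(-;\Z/p)$ ``for $\F_{p}$-algebras is an equivalence in positive homological degrees'' is not a general fact, and you cannot transport the BMS/AMMN filtration from $\TC$ to $K$ on this basis. (Even for $\F_{p}$ itself the two spectra differ in degree $-1$; for general $\F_{p}$-algebras the trace is far from an equivalence in any range.) What is true, and what Kelly--Morrow prove, is that for a \emph{local Cartier-smooth} $\F_{p}$-algebra the trace identifies $K(-;\Z/p)$ with an appropriate truncation of $\TC(-;\Z/p)$; establishing this, together with the existence and identification of a motivic filtration on $K$-theory of the non-noetherian, non-quasisyntomic ring $V$, is the bulk of the work and cannot simply be cited as known. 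Your fallback suggestion (left Kan extension from polynomial algebras, or quasisyntomic descent, combined with Geisser--Levine for smooth algebras) is closer to a correct argument, but it still requires proving that $K(V;\Z/p)$ agrees with the left Kan extended theory, which is precisely the point at which the paper's more economical route---compute $\TR$ by derived methods, get $\TC$-vanishing, use CMM \emph{injectivity} rather than an equivalence---pays off. If you want to complete your route, the cleanest fix is to adopt the paper's $\TR$-computation and CMM injectivity for the final vanishing and keep the motivic filtration only as motivation, or to carry out the Kelly--Morrow argument in full.
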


Using a recent result of Clausen, Mathew, and Morrow \cite{ClausenMathewMorrow}, this follows from an analogous vanishing of topological cyclic homology (see Proposition~\ref{prop:tc-vanishing-on-valuation-rings}). 
The main new ingredient is a Cartier isomorphism for valuation rings, a proof of which was outlined in a letter of Gabber to the first author~\cite{Gabber_letter}.
We give a detailed account of his argument in the appendix.

This existence of a Cartier isomorphism is also used in a recent preprint of Kelly and Morrow \cite{KellyMorrow}, where they independently prove the finer result $K_i(V;\Z/p^r) \cong W_r\Omega^i_{V,\log}$ by similar methods.

\subsection*{Acknowledgement}
We thank Ofer Gabber for sending us an outline of the proof of the Cartier isomorphism
mentioned above and for helpful comments on our presentation of his results in the appendix.
We are grateful to the referees for a careful reading of our paper and several helpful comments. In particular, they suggested an alternative proof of the vanishing result for the topological cyclic homology of a valuation ring mentioned above.

\subsection*{Notation}
All rings in this text are assumed to be commutative and unital.
For a presheaf of spectra $E$ and an integer $p$ we write $E/p$ for the cofibre of the $p$-multiplication on $E$ and set
$E_i(X;\Z/p)=\pi_i(E/p(X))$.

\section{Preliminaries on the \textit{p}-dimension}
\label{section:p-dim}

Let $p$ be a prime number.
In this section we introduce the notion of $p$-dimension for $\F_{p}$-algebras. 
Let $R$ be an $\F_{p}$-algebra, and let $A$ be an $R$-algebra. By $R[A^{p}] \subset A$ we denote the $R$-subalgebra of $A$ generated by the $p$-th powers of the elements in $A$. Recall the following definition from \cite[$0_{\text{IV}}$(21.1.9)]{EGA}.

\begin{defn}\label{sec2.defpbase}
A family  $(x_{i})_{i\in I}$ of elements of $A$ is called a \emph{$p$-basis of $A$ over $R$} if the family of monomials
\begin{equation}
	\label{eq:p-monomials}
\prod_{i} x_{i}^{n_{i}} \quad (0 \leq n_{i} < p, n_{i} = 0 \text{ for all but finitely many } i \in I)
\end{equation}
forms a basis of $A$ as an $R[A^{p}]$-module. It is called \emph{$p$-independent over $R$} if the family \eqref{eq:p-monomials} is linearly independent over $R[A^{p}]$. The monomials \eqref{eq:p-monomials} are called the \emph{$p$-monomials} of the family $(x_{i})_{i\in I}$.
A $p$-basis for $A$ over $\F_{p}$ is called an \emph{absolute $p$-basis} or simply a \emph{$p$-basis for $A$}.
\end{defn}

We record the following simple observation.
\begin{lemma}
	\label{lem:p-basis-localization}
Assume that the family $(x_{i})_{i\in I}$ forms a $p$-basis of $A$ over $R$. If $S \subset A$ is a multiplicative set, then $(\frac{x_{i}}{1})_{i\in I}$ forms a $p$-basis of the localization $S^{-1}A$ over $R$.
\end{lemma}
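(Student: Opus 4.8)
The plan is to deduce the statement from the fact that localization is exact and commutes with direct sums, once the relevant subring of $S^{-1}A$ has been correctly identified. Write $B = R[A^{p}]$, and let $N$ be the set of finitely supported functions $n\colon I \to \{0,\dots,p-1\}$, so that $x^{n} := \prod_{i} x_{i}^{n_{i}}$ ranges over the $p$-monomials of $(x_{i})_{i\in I}$ and, by hypothesis, $A = \bigoplus_{n\in N} B\,x^{n}$ as a $B$-module. The one subtlety to watch is that $S$ need not be contained in $B$, so one cannot localize $B$ directly at $S$; instead one localizes at $S^{p} := \{s^{p} \mid s\in S\} \subseteq B$. Beyond this bookkeeping I do not expect a genuine obstacle.

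So the first step is to record two compatibilities inside $S^{-1}A$. Since $S^{p}\subseteq S$ and every $s\in S$ is invertible in $(S^{p})^{-1}A$ (with inverse $s^{p-1}/s^{p}$), the canonical map $A\to (S^{p})^{-1}A$ factors through an isomorphism $S^{-1}A \cong (S^{p})^{-1}A$ compatible with the maps out of $A$; I will use this to identify the two rings. Next, I claim that, as subrings of $S^{-1}A$, one has $R[(S^{-1}A)^{p}] = (S^{p})^{-1}B$: the inclusion $\subseteq$ holds because $(a/s)^{p} = a^{p}/s^{p}$ with $a^{p}\in B$, $s^{p}\in S^{p}$, and $R\subseteq (S^{p})^{-1}B$; the inclusion $\supseteq$ holds because $(S^{p})^{-1}B$ is generated as a ring by $R$, by the elements $a^{p} = (a/1)^{p}$ for $a\in A$, and by the elements $1/s^{p} = (1/s)^{p}$ for $s\in S$, all of which lie in $R[(S^{-1}A)^{p}]$.

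With these identifications, the conclusion is formal. Each $B\,x^{n}$ is free of rank one over $B$ on $x^{n}$ (because $\{x^{n}\}_{n}$ is a $B$-basis of $A$), so applying the exact, direct-sum-preserving functor $(S^{p})^{-1}(-)$ to $A = \bigoplus_{n\in N} B\,x^{n}$ gives
\[
S^{-1}A \;=\; (S^{p})^{-1}A \;=\; \bigoplus_{n\in N} (S^{p})^{-1}\!\bigl(B\,x^{n}\bigr) \;=\; \bigoplus_{n\in N} \bigl((S^{p})^{-1}B\bigr)(x^{n}/1) \;=\; \bigoplus_{n\in N} R[(S^{-1}A)^{p}]\,(x^{n}/1).
\]
As $x^{n}/1 = \prod_{i}(x_{i}/1)^{n_{i}}$ is precisely the $p$-monomial of the family $(x_{i}/1)_{i\in I}$ indexed by $n$, this exhibits the $p$-monomials of $(x_{i}/1)_{i\in I}$ as a basis of $S^{-1}A$ over $R[(S^{-1}A)^{p}]$; that is, $(x_{i}/1)_{i\in I}$ is a $p$-basis of $S^{-1}A$ over $R$, as desired.
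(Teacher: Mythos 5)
Your proof is correct, and it rests on the same key observation as the paper's — that every element of $S^{-1}A$ can be written in the form $a/s^{p}$, equivalently that $S^{-1}A = (S^{p})^{-1}A$. You simply carry this out more systematically: identifying $R[(S^{-1}A)^{p}]$ with $(S^{p})^{-1}R[A^{p}]$ and then applying the exact, direct-sum-preserving functor $(S^{p})^{-1}(-)$ yields generation and $p$-independence in one stroke, where the paper sketches generation from the $a/s^{p}$ form and leaves $p$-independence to the reader.
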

\begin{proof}
To see that the family of $p$-monomials associated with $(\frac{x_{i}}{1})_{i \in I}$ generates $S^{-1}A$ as an $R[(S^{-1}A)^{p}]$-module, note that we can  write any element of $S^{-1}A$ in the form $\frac{a}{s^{p}}$ with $a\in A$ and $s\in S$. It is also easy to see that the family  $(\frac{x_{i}}{1})_{i \in I}$ is $p$-independent over $R$.
\end{proof}

If the family  $(x_{i})_{i\in I}$ is a $p$-basis of $A$ over $R$, then it is also a differential basis, i.e.~the family $(dx_{i})_{i\in I}$ is a basis of the  $A$-module of K\"ahler differentials $\Omega_{A/R}$, see \cite[$0_{\text{IV}}$ Cor.~21.2.5]{EGA}. The converse holds if $R \to A$ is a field extension \cite[Thm.~26.5]{Matsumura}. In particular, if $k' \subset k$ is any  extension of fields of characteristic $p$, then $k$ admits a $p$-basis over $k'$ and  any two $p$-bases have the same cardinality. This cardinality is called the \emph{$p$-rank of $k$ over $k'$}. The $p$-rank of $k$ over $\F_{p}$ is simply called the \emph{$p$-rank of k} and denoted by $\pdim(k)$. So $\pdim(k) = \dim_{k} \Omega_{k}$, where $\Omega_{(-)}= \Omega_{(-)/\F_{p}}$ denotes the module of absolute K\"ahler differentials.

\begin{lemma}
	\label{lem:p-dim-field-extension}
Assume that $k \subset k'$ is a finitely generated field extension in characteristic $p$.
Then 
\[
\pdim(k') = \pdim(k) + \trdeg_{k} k'.
\]
In particular, $\pdim(k) = \pdim(k')$ in case the extension is finite.
\end{lemma}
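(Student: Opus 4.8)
The plan is to reduce to two elementary building blocks: adjoining a single transcendental element, and a finite extension. The reduction rests on the observation that if the asserted equality holds for two consecutive steps $k\subset k'$ and $k'\subset k''$ of a tower, then it holds for $k\subset k''$, since $\trdeg_k k''=\trdeg_k k'+\trdeg_{k'}k''$. Concretely, I would pick a transcendence basis $t_1,\dots,t_d$ of $k'$ over $k$ (so $d=\trdeg_k k'$) and refine $k\subset k'$ to the tower
\[
k\ \subset\ k(t_1)\ \subset\ \cdots\ \subset\ k(t_1,\dots,t_d)\ \subset\ k',
\]
whose first $d$ inclusions each adjoin one transcendental element and whose last inclusion is finite; it then suffices to prove the formula for each individual step.

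For a step $K\subset K(t)$ with $t$ transcendental over $K$, I would use that $K[t]$ is smooth over $K$, so the cotangent sequence for $\F_p\to K\to K[t]$ is split short exact; combined with $\Omega_{K[t]/K}=K[t]\,dt$ and the compatibility of K\"ahler differentials with localization, this yields
\[
\Omega_{K(t)/\F_p}\ \cong\ \bigl(K(t)\otimes_K\Omega_{K/\F_p}\bigr)\oplus K(t)\,dt ,
\]
hence $\pdim(K(t))=\pdim(K)+1=\pdim(K)+\trdeg_K K(t)$, as required.

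For a finite step $K\subset K'$ one has $\trdeg_K K'=0$, so the point is to show $\pdim(K')=\pdim(K)$. Assume first $\pdim(K)<\infty$. Since every element of $\F_p$ is a $p$-th power, $\F_p[K^p]=K^p$, so an absolute $p$-basis of $K$ is precisely a $p$-basis of $K$ over $K^p$; therefore $K$ is free over $K^p$ on the $p^{\pdim(K)}$ many $p$-monomials and $[K:K^p]=p^{\pdim(K)}$, and likewise $[K':(K')^p]=p^{\pdim(K')}$. The Frobenius $x\mapsto x^p$ is a ring isomorphism $K'\xrightarrow{\ \sim\ }(K')^p$ carrying $K$ onto $K^p$, so it identifies the extension $K^p\subset (K')^p$ with $K\subset K'$; in particular $[(K')^p:K^p]=[K':K]$. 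Comparing the two factorizations of $[K':K^p]$, through $K$ and through $(K')^p$, and cancelling the finite integer $[K':K]$, I obtain $[K:K^p]=[K':(K')^p]$, i.e.\ $\pdim(K)=\pdim(K')$. If instead $\pdim(K)=\infty$, I would reduce to a simple algebraic extension $K'=K[x]/(f)$ and observe that in the conormal sequence $(f)/(f)^2\to\Omega_{K[x]/\F_p}\otimes_{K[x]}K'\to\Omega_{K'/\F_p}\to 0$ the leftmost term is at most one-dimensional, so the image of $K'\otimes_K\Omega_{K/\F_p}$ in $\Omega_{K'/\F_p}$ is still infinite-dimensional; thus $\pdim(K')=\pdim(K)$ and the formula again holds, both sides being infinite (as $\trdeg_k k'$ is finite).

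The only substantive case is the finite one, and there the two facts that do the work are the identity $[K:K^p]=p^{\pdim(K)}$ (valid when $\pdim(K)<\infty$) and the observation that Frobenius identifies the extension $(K')^p/K^p$ with $K'/K$; everything else --- the transcendence basis, additivity of $\trdeg$, and the two cotangent sequences --- is routine bookkeeping. (Alternatively, the whole lemma can be obtained in one stroke from the four-term exact sequence $0\to\Upsilon_{K'/K}\to K'\otimes_K\Omega_{K/\F_p}\to\Omega_{K'/\F_p}\to\Omega_{K'/K}\to 0$ together with the identity $\dim_{K'}\Omega_{K'/K}-\dim_{K'}\Upsilon_{K'/K}=\trdeg_K K'$ for the imperfection module of a finitely generated extension, as developed in \cite{EGA}; I find the elementary reduction above more transparent.)
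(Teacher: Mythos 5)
Your proof is correct, and it genuinely differs from the paper's. The paper goes directly via the four-term exact sequence
\[
0 \to \Gamma_{k'/k/\F_p} \to k'\otimes_k\Omega_k \to \Omega_{k'} \to \Omega_{k'/k} \to 0
\]
combined with the Cartier equality $\dim_{k'}\Omega_{k'/k} - \dim_{k'}\Gamma_{k'/k/\F_p} = \trdeg_k k'$ (Matsumura Thm.~26.10), which handles the general finitely generated extension in one stroke; this is exactly the ``alternative'' you mention at the end. Your route instead factors the extension through a transcendence basis and handles the two building blocks by hand: for a purely transcendental step the split cotangent sequence, and for a finite step the degree bookkeeping $[K:K^p]=p^{\pdim K}$ together with the Frobenius identification $[(K')^p:K^p]=[K':K]$, reducing everything to cancelling a finite integer in $[K':K^p]$. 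Your argument is more elementary in that it avoids the imperfection module and the Cartier equality entirely (the only nontrivial input is the $[K:K^p]$ formula, which is immediate from the definition of $p$-basis), at the cost of a case split for $\pdim=\infty$; the paper's argument is shorter once the Cartier equality is black-boxed. One small remark: your finite-case degree argument implicitly also shows $\pdim(K')<\infty$ whenever $\pdim(K)<\infty$, since $[K':(K')^p]$ divides the finite number $[K':K^p]$ — worth stating explicitly so the two cases are cleanly disjoint.
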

\begin{proof}
This follows from the exact sequence
\[
0 \to \Gamma_{k'/k/\F_{p}} \to \Omega_{k} \otimes_{k} k' \to \Omega_{k'} \to \Omega_{k'/k} \to 0,
\]
where the imperfection module $\Gamma_{k'/k/\F_{p}}$ is defined as the kernel of the map in the middle,
together with the Cartier equality \cite[Thm.~26.10]{Matsumura} 
\[
\dim_{k'} \Omega_{k'/k} - \dim_{k'} \Gamma_{k'/k/\F_{p}} = \trdeg_{k'} k,
\]
which holds since the field extension $k \subset k'$ is finitely generated. 
\end{proof}

\begin{defn}
	\label{def:p-dim}
For an $\F_p$-scheme $X$ the \emph{$p$-dimension} is defined as 
\[
\pdim(X) = \sup \{  \pdim(k(x)) + \dim(\cO_{X,x}) \, | \, x \in X \}
\]
where $k(x)$ is the residue field of $X$ at $x$. For an $\F_{p}$-algebra $A$ we set $\pdim(A) = \pdim(\Spec(A))$.
\end{defn}

Note that  $\dim( \cO_{\Spec(A), \p} ) = \height(\p)$ and 
\[
\pdim(A) = \sup \{  \pdim(A/\p') \, | \, \p' \subset A \text{ minimal prime ideal} \}.
\]

In the following, we collect some elementary properties of the $p$-dimension.

\begin{lemma}
	\label{lem:p-dim-for-p-basis}
If the noetherian $\F_{p}$-algebra $A$ is reduced and has an absolute $p$-basis of cardinality $r$, then $\pdim(A) = r$.
\end{lemma}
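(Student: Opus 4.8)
The plan is to prove the sharper statement that $\pdim(k(\p)) + \height(\p) = r$ for \emph{every} prime ideal $\p \subset A$; the supremum over $\p$ then yields $\pdim(A) = r$. Since $|I| = r$ is finite, the assumption that $(x_i)_{i\in I}$ is an absolute $p$-basis unwinds to: $A$ is free of rank $p^r$ over the subring $A^p = \F_p[A^p]$ of $p$-th powers, with basis the $p^r$ associated $p$-monomials. Because $A$ is reduced, the Frobenius $x \mapsto x^p$ is an isomorphism $A \xrightarrow{\ \sim\ } A^p$; hence $A$, regarded as a module over itself through Frobenius, is free of rank $p^r$, so Frobenius is (faithfully) flat and $A$ is regular by Kunz's theorem. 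Fix $\p$ and put $B = A_\p$, $K = k(\p)$, $d = \height(\p) = \dim B$. By Lemma~\ref{lem:p-basis-localization} the localized family is an absolute $p$-basis of $B$, still of cardinality $r$ (as $B \neq 0$), so $B$ is free of rank $p^r$ over $B^p$; moreover $B$ is regular local of dimension $d$.

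The core of the argument is to compute the dimension of $B/\fm^{[p]}$ over the field $K' := B^p/(B^p \cap \fm)$ in two ways; here $\fm = \p B$ and $\fm^{[p]} \subset B$ denotes the ideal generated by the $p$-th powers of the elements of $\fm$. Using that $B$ is reduced, so that Frobenius identifies $B$ with $B^p$, one first checks that $B^p \cap \fm = \{a^p : a \in \fm\}$ is the maximal ideal of $B^p$, that $(B^p \cap \fm)B = \fm^{[p]}$, and that $K' \cong K$ via Frobenius. On the one hand $B/\fm^{[p]} = B \otimes_{B^p} K'$, which is free of rank $p^r$ over $K'$ since $B$ is free of rank $p^r$ over $B^p$. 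On the other hand, since $B$ is regular local of dimension $d$, the ideal $\fm^{[p]}$ is generated by the $p$-th powers of a regular system of parameters, so $B/\fm^{[p]}$ has finite length $p^d$ as a $B$-module — most transparently after completing, where the quotient becomes $K[t_1,\dots,t_d]/(t_1^p,\dots,t_d^p)$. All of its composition factors are isomorphic to $B/\fm = K$, on which $B^p$ acts through its image $K^p$ in $K$; so, as a vector space over $K' \cong K^p$, each factor has dimension $[K:K^p]$, whence $\dim_{K'}(B/\fm^{[p]}) = p^d\,[K:K^p]$. Comparing the two counts gives $p^r = p^d\,[K:K^p]$, hence $[K:K^p] = p^{\,r-d}$; in particular $[K:K^p] < \infty$, so $\pdim(K) = \log_p[K:K^p] = r - d$ by the identification of $p$-bases with differential bases for field extensions recalled above. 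Therefore $\pdim(k(\p)) + \height(\p) = r$ for every $\p$, and so $\pdim(A) = r$.

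I expect the two delicate points to be: the appeal to Kunz's theorem to know $B$ is regular — without regularity one cannot evaluate $\operatorname{length}_B(B/\fm^{[p]})$ — and the bookkeeping in the second count, where the residue field $K$ enters twice. It appears once as the composition factor of the $B$-module $B/\fm^{[p]}$, contributing the factor $p^d$, and once, via the Frobenius-twisted action of $B^p$ on that factor, as a $K^p$-vector space of dimension $[K:K^p] = p^{\pdim(K)}$; it is precisely this twist that produces the residue-field contribution to the $p$-dimension.
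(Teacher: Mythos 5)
Your proof is correct, and it takes a genuinely different route from the paper's. Both proofs first reduce to the local case via Lemma~\ref{lem:p-basis-localization} and both pass through the regularity of $A$, but the paper deduces regularity from the EGA formal smoothness criterion ($0_{\mathrm{IV}}$~Thm.~21.2.7 and 22.5.8: a reduced $\F_p$-algebra with a $p$-basis is formally smooth, and formally smooth noetherian local implies regular), whereas you use Kunz's theorem after observing that Frobenius is finite free of rank $p^r$. More substantively, the paper then finishes via the second fundamental exact sequence $0 \to \fm/\fm^2 \to \Omega_A \otimes_A k \to \Omega_k \to 0$, reading off $\dim_k \Omega_k = r - \dim A$ from $\dim_k(\Omega_A\otimes_A k)=r$ (the $p$-basis is a differential basis) and $\dim_k \fm/\fm^2 = \dim A$; you instead bypass Kähler differentials entirely and compute $[k(\p):k(\p)^p]$ by counting $\dim_{K'}(B/\fm^{[p]})$ in two ways, once using freeness over $B^p$ and once using the length $p^d$ and the Frobenius-twisted residue-field action. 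Your route is somewhat longer but more elementary in the sense that it only uses Kunz's theorem and the regular-sequence length count, and it yields the sharper pointwise statement $\pdim(k(\p)) + \height(\p) = r$ for every prime $\p$ rather than just after taking the supremum; the paper's route is terser and keeps the cotangent sequence explicitly in view, which is thematically closer to the rest of their Section~2. One small bookkeeping point worth making explicit in your write-up: the identity $\fm^{[p]} = (t_1^p,\dots,t_d^p)$ for a regular system of parameters uses additivity of the Frobenius, $(\sum b_i t_i)^p = \sum b_i^p t_i^p$, so that every $a^p$ with $a\in\fm$ already lies in $(t_1^p,\dots,t_d^p)$.
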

\begin{proof}
By Lemma~\ref{lem:p-basis-localization} it is enough to show that 
\[
\pdim(k) + \dim(A) = r 
\]
provided $A$ is moreover local with residue field $k$. Since $A$ is reduced, the Frobenius map $a \mapsto a^{p}$ is injective. According to \cite[$0_{\text{IV}}$ Thm.~21.2.7]{EGA} the $\F_{p}$-algebra $A$ is then formally smooth and hence regular by Theorem~22.5.8 there. Let $\fm$ denote the maximal ideal of $A$. The claim now follows from the fundamental exact sequence \cite[Thm.~25.2]{Matsumura}
\[
0 \to \fm/\fm^{2} \to \Omega_{A} \otimes_{A} k \to \Omega_{k} \to 0
\]
using $\dim_{k} \fm/\fm^{2} = \dim(A)$ by the regularity of $A$ and $\dim_{k}(  \Omega_{A} \otimes_{A} k ) = r$ by the remarks preceding Lemma~\ref{lem:p-dim-field-extension}.
\end{proof}

\begin{lemma}
	\label{lem:p-dim-finite-extension}
Let $A\subset B$ be a finite extension of integral  $\F_p$-algebras. Then $\pdim(A)=\pdim(B)$.
\end{lemma}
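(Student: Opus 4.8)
The plan is to match up, fibrewise along the map $\Spec(B)\to\Spec(A)$, the two defining suprema
\[
\pdim(A) = \sup_{\p}\bigl(\pdim(k(\p)) + \height(\p)\bigr),
\qquad
\pdim(B) = \sup_{\q}\bigl(\pdim(k(\q)) + \height(\q)\bigr).
\]
Since $A\subset B$ is finite, hence integral, the theorems of lying over, going up, and incomparability apply. Recall also that every prime of $B$ contracts to a unique prime of $A$, and, by lying over, every prime of $A$ arises this way.

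First I would compare residue fields. If $\q\subset B$ lies over $\p=\q\cap A$, then $B/\q$ is a domain which is finite as a module over the domain $A/\p$; inverting $A/\p\setminus\{0\}$ therefore exhibits $k(\q)=\Frac(B/\q)$ as a finite field extension of $k(\p)=\Frac(A/\p)$. By Lemma~\ref{lem:p-dim-field-extension} this gives $\pdim(k(\p))=\pdim(k(\q))$ for every such pair $\p\subset\q$; in particular this value is independent of the choice of $\q$ above a fixed $\p$.

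Next I would compare heights. Given any chain $\q_0\subsetneq\cdots\subsetneq\q_m=\q$ in $B$, incomparability forces the contractions $\q_0\cap A\subsetneq\cdots\subsetneq\q_m\cap A$ to be strictly increasing in $A$, so $\height(\q)\le\height(\q\cap A)$. Conversely, given a chain $\p_0\subsetneq\cdots\subsetneq\p_n=\p$ in $A$, lying over produces a prime of $B$ above $\p_0$ and repeated application of going up lifts the chain to a chain of length $n$ in $B$ whose top term lies above $\p$; hence $\height(\p)=\sup\{\height(\q)\mid\q\cap A=\p\}$. Combining this with the previous paragraph, for each prime $\p\subset A$ we get
\[
\sup_{\q\cap A=\p}\bigl(\pdim(k(\q))+\height(\q)\bigr)=\pdim(k(\p))+\height(\p),
\]
and taking the supremum over all $\p\subset A$ yields $\pdim(B)=\pdim(A)$.

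I do not expect a genuine obstacle: the argument is just the standard behaviour of prime ideals under an integral ring extension, together with Lemma~\ref{lem:p-dim-field-extension}. The only mild points of care are the bookkeeping of the nested suprema and the fact that, as $A$ is not assumed noetherian, heights and $\pdim$ may equal $\infty$ — which the sup manipulations above accommodate.
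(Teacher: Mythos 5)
Your proof is correct and follows essentially the same route as the paper's: identify $\pdim(k(\p))=\pdim(k(\q))$ via Lemma~\ref{lem:p-dim-field-extension} applied to the finite residue field extension, obtain $\height(\q)\le\height(\p)$ from integrality (incomparability), and obtain the reverse inequality on suprema via lying over and going up. The only difference is that you spell out the chain-lifting bookkeeping more explicitly than the paper does.
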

\begin{proof}
For a prime ideal $\q\subset B$ and $\p=\q\cap A\subset A$, we have the equality $\pdim(k(\q))=\pdim(k(\p))$ by Lemma~\ref{lem:p-dim-field-extension} as the residue field extension $k(\p)\subset k(\q)$ is finite.
Moreover, $\height(\q)\leq\height(\p)$, since $A \subset B$ is integral,  and hence $\pdim(B)\leq \pdim(A)$.
On the other hand, for every prime ideal $\p\subset A$ there exists a prime ideal $\q\subset B$ with $\p=\q\cap A$ and $\height(\q)\geq\height(\p)$ by going-up \cite[Thm.~9.4]{Matsumura}.
Therefore $\pdim(B)\geq \pdim(A)$.
This shows the claim.
\end{proof}

\begin{lemma}\label{lem:p-dim-and-dimension}
Let $A$ be either an $\F_p$-algebra of finite type over a field $k$ or a complete local noetherian $\F_{p}$-algebra with residue field $k$. Then
\[
\pdim(A) = \pdim(k) + \dim(A)
\]
where $\dim$ denotes the Krull dimension.
\end{lemma}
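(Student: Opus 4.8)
The plan is to treat the two cases uniformly by reducing to a statement about a local ring and then to its residue field via the fundamental exact sequence, exactly as in the proof of Lemma~\ref{lem:p-dim-for-p-basis}, but now without the reducedness hypothesis. First I would observe that in both cases $A$ is a noetherian $\F_p$-algebra that is either of finite type over $k$ or complete local with residue field $k$; in particular $A$ is excellent and, crucially, its formal fibres are geometrically regular, which is what one needs to propagate $p$-rank information along completion and localization. The inequality $\pdim(A)\le \pdim(k)+\dim(A)$ will come from the standard dimension formula: for a prime $\p\subset A$ one has, using Lemma~\ref{lem:p-dim-field-extension} in the finite-type case and the analogous statement for complete local rings, $\pdim(k(\p)) + \height(\p) \le \pdim(k) + \dim(A)$, the key input being that $\trdeg_k k(\p) + \height(\p) \le \dim(A)$ (with equality when $A$ is a domain, by the dimension formula for finite-type algebras over a field, resp.\ the corresponding statement for complete local rings), and that $\pdim(k(\p)) = \pdim(k) + \trdeg_k k(\p)$ in the finite-type case while in the complete local case one controls $\pdim(k(\p))$ against $\pdim(k)$ plus the transcendence/height contribution through the structure theory of complete local rings.

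For the reverse inequality $\pdim(A)\ge \pdim(k)+\dim(A)$ I would exhibit a single prime realizing the supremum. In the complete local case the obvious candidate is $\p = $ a minimal prime $\p'$ with $\dim(A/\p') = \dim(A)$ together with a suitable further specialization; more efficiently, one takes the maximal ideal $\fm$ and uses the fundamental exact sequence $\fm/\fm^2 \to \Omega_A\otimes_A k \to \Omega_k \to 0$ together with the fact that for a complete local $\F_p$-algebra $\Omega_A \otimes_A k$ has $k$-dimension at least $\pdim(k) + \dim(A)$ — here one invokes Cohen's structure theorem to write $A$ as a quotient of a power series ring $k'[[t_1,\dots,t_n]]$ with $k'$ a coefficient field, and computes the $p$-rank of that power series ring directly (its absolute $p$-basis is a $p$-basis of $k'$ together with the $t_i$), then uses that $\pdim(A/\fm) = \pdim(k' ) = \pdim(k)$. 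In the finite-type case one instead picks a minimal prime $\p'$ of maximal coheight and a chain of length $\dim(A)$ above it, lands at a maximal ideal $\fm$ with $k(\fm)$ finite over $k$ hence $\pdim(k(\fm)) = \pdim(k)$ by Lemma~\ref{lem:p-dim-field-extension}, and adds $\height(\fm) = \dim(A)$; alternatively and more symmetrically, pick the minimal prime $\p'$ itself, so that $\pdim(k(\p')) + \height(\p') \geq \pdim(k) + \trdeg_k k(\p')$, and note $\trdeg_k k(\p') = \dim(A/\p') = \dim(A)$ for a well-chosen $\p'$.

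I expect the main obstacle to be the complete local case, specifically making precise and correct the claim that $\pdim$ behaves well under completion and that $\pdim(k(\p)) \le \pdim(k) + \bigl(\dim(A) - \height(\p)\bigr)$ for \emph{all} primes $\p$ of a complete local ring, since $k(\p)$ need not be finitely generated over $k$ and Lemma~\ref{lem:p-dim-field-extension} does not directly apply. The clean route is: reduce to $A$ a domain (both sides only see $A/\p'$ for minimal $\p'$, by the remark after Definition~\ref{def:p-dim}), pass to the normalization if needed using Lemma~\ref{lem:p-dim-finite-extension}, and then use the dimension formula for excellent (in particular, universally catenary with geometrically regular formal fibres) local domains together with a direct computation of $\Omega$ for the Cohen presentation; the excellence hypothesis, hidden in "complete", is exactly what guarantees the residue field extensions and formal fibres are tame enough for the $p$-rank bookkeeping to close up. I would also remark that the finite-type case is essentially classical and follows from Lemma~\ref{lem:p-dim-field-extension} plus the Noether normalization-style dimension formula, so the real content is packaging both under one roof.
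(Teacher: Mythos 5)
Your finite-type case is essentially fine and parallels Lemma~\ref{lem:p-dim-and-transcendence-degree}: Lemma~\ref{lem:p-dim-field-extension} plus the dimension equality for finitely generated domains over a field yields the upper bound, and picking a maximal ideal of height $\dim A$ (or a minimal prime of maximal coheight) yields the lower bound. The lower bound in the complete local case is also immediate -- take $\p=\fm$ in the definition, no cotangent sequence needed. The genuine gap is the complete-local upper bound, which you partially recognize but do not close. For a non-maximal prime $\p$ of a complete local ring the residue field $k(\p)$ is typically not finitely generated over $k$ (e.g.\ the fraction field of $k[[t]]$ has infinite transcendence degree over $k$), so Lemma~\ref{lem:p-dim-field-extension} gives you no control over $\pdim(k(\p))$, and the strategy of bounding $\pdim(k(\p)) + \height(\p)$ prime by prime stalls. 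Your proposed fixes do not repair this: invoking Cohen to write $A$ as a \emph{quotient} $k'[[t_1,\dots,t_n]]/I$ goes in the wrong direction, since $\pdim$ is not controlled by passage to a quotient and $n$ is in general larger than $\dim A$; and normalization (via Lemma~\ref{lem:p-dim-finite-extension}) does not reduce the difficulty either. The paper's argument is different and avoids the issue entirely: after reducing to $A$ integral via the remark following Definition~\ref{def:p-dim}, it produces a finite \emph{injective} map $k[[x_1,\dots,x_d]]\hookrightarrow A$ with $d=\dim A$ (Cohen's coefficient field plus a system of parameters), applies Lemma~\ref{lem:p-dim-finite-extension} to transfer the computation of $\pdim$ to the power series ring, and then reads off $\pdim(k[[x_1,\dots,x_d]])=\pdim(k)+d$ from the explicit absolute $p$-basis using Lemma~\ref{lem:p-dim-for-p-basis}; Noether normalization plays the same role in the finite-type case. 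The conceptual point you are missing is that one should never try to bound $\pdim(k(\p))$ for arbitrary $\p$ in the complete case -- instead, move the whole computation to a ring with an explicit $p$-basis.
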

\begin{proof}
We may assume that $\pdim(k) < \infty$, since otherwise both sides of the equation are $\infty$.
By the remark after Definition~\ref{def:p-dim}, we may assume that $A$ is integral. Write $d$ for the Krull dimension of $A$.
In case $A$ is of finite type over a field $k$, Noether normalization \cite[(14.G)]{MatsumuraCA} yields a finite injective map $k[x_{1}, \dots, x_{d}] \to A$.
If $A$ is complete,  $A$  contains a coefficient field \cite[Thm.~28.3]{Matsumura}, and a choice of a system of parameters yields a finite injective map $k[[x_{1}, \dots, x_{d}]] \to A$.
By Lemma~\ref{lem:p-dim-finite-extension} it now suffices to show that 
\[
\pdim(k[x_{1}, \dots, x_{d}] ) = \pdim(k[[x_{1}, \dots, x_{d}]] ) = \pdim(k) + d.
\]
But in view of Lemma~\ref{lem:p-dim-for-p-basis} this follows from the fact that if $b_{1}, \dots, b_{r}$ form a $p$-basis of $k$, then $b_{1}, \dots, b_{r}, x_{1}, \dots, x_{d}$ form a $p$-basis of $k[x_{1}, \dots, x_{d}]$ and of $k[[x_{1}, \dots, x_{d}]]$, see \cite[Lemma~2.1.5]{GabberOrgogozo}.
\end{proof}

\begin{lemma}\label{lem:p-dim-and-transcendence-degree}
Let $A \subset B$ be an extension of finite type of integral noetherian $\F_{p}$-algebras. Then
$\pdim(B) \leq \pdim(A) + \trdeg_{\Frac(A)} \Frac(B)$.
\end{lemma}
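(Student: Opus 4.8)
The plan is to verify the inequality one prime at a time and to combine two standard facts: the behaviour of the $p$-rank under finitely generated field extensions (Lemma~\ref{lem:p-dim-field-extension}) and Krull's dimension inequality for a finitely generated extension of noetherian domains. So fix a prime ideal $\q \subset B$ and put $\p = \q \cap A$. By Definition~\ref{def:p-dim} it suffices to prove
\[
\pdim(k(\q)) + \height(\q) \ \leq\ \pdim(A) + \trdeg_{\Frac(A)}\Frac(B),
\]
and since $\pdim(A) \geq \pdim(k(\p))$ we may assume $\pdim(A) < \infty$, hence also $\pdim(k(\p)) < \infty$.

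First I would note that the residue field extension $k(\p) \subset k(\q)$ is finitely generated: if $B = A[b_{1},\dots,b_{n}]$, then $B/\q$ is a domain generated over $A/\p$ by the images of the $b_{i}$, so $k(\q) = \Frac(B/\q)$ is generated as a field over $k(\p) = \Frac(A/\p)$ by those images. Lemma~\ref{lem:p-dim-field-extension} then gives $\pdim(k(\q)) = \pdim(k(\p)) + \trdeg_{k(\p)} k(\q)$. Next, Krull's dimension inequality for the finitely generated extension $A \subset B$ of noetherian domains (see e.g.\ \cite[Thm.~15.5]{Matsumura}) yields
\[
\height(\q) + \trdeg_{k(\p)} k(\q) \ \leq\ \height(\p) + \trdeg_{\Frac(A)}\Frac(B).
\]
Adding $\pdim(k(\p))$ to both sides and using the previous equality, we obtain
\[
\pdim(k(\q)) + \height(\q) \ \leq\ \bigl(\pdim(k(\p)) + \height(\p)\bigr) + \trdeg_{\Frac(A)}\Frac(B) \ \leq\ \pdim(A) + \trdeg_{\Frac(A)}\Frac(B),
\]
where the last inequality is again Definition~\ref{def:p-dim}. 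Taking the supremum over all $\q \subset B$ finishes the proof.

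The argument is essentially bookkeeping, so I do not expect a serious obstacle. The one point that needs care is to invoke the correct direction of Krull's inequality: only the displayed inequality holds for an arbitrary finitely generated extension of noetherian domains, whereas the matching equality (the dimension formula) requires $A$ to be universally catenary — an assumption we are deliberately not making here. Everything else is a formal manipulation of the definitions together with Lemma~\ref{lem:p-dim-field-extension}.
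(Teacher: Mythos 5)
Your proof is correct and follows essentially the same route as the paper: it combines the dimension inequality \cite[Thm.~15.5]{Matsumura} with Lemma~\ref{lem:p-dim-field-extension} and then does the bookkeeping prime by prime. The only difference is that you spell out a couple of details the paper leaves implicit (why $k(\p)\subset k(\q)$ is finitely generated, and the reduction to $\pdim(A)<\infty$), which is fine.
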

\begin{proof}
Let $\q \subset B$ be a prime ideal, and let $\p = \q \cap A \subset A$. The dimension inequality \cite[Thm.~15.5]{Matsumura} gives
\[
\height(\q) + \trdeg_{k(\p)} k(\q) \leq \height(\p) + \trdeg_{\Frac(A)} \Frac(B).
\]
Since the field extension $k(\p) \subset k(\q)$ is finitely generated, Lemma~\ref{lem:p-dim-field-extension} implies that 
\[
\pdim(k(\q)) = \pdim(k(\p)) + \trdeg_{k(\p)} k(\q).
\]
Taken together, these facts imply the claim.
\end{proof}

\section{Derived differential forms and valuation rings}\label{sec:deriform}

Let $A$ be an $\F_{p}$-algebra.  Since the differential of the (absolute) de Rham complex $\Omega^{*}_{A}$ is Frobenius-linear, the subgroups of cycles
$Z\Omega^{i}_{A} \subset \Omega^{i}_{A}$ and boundaries $B\Omega^{i}_{A} \subset \Omega^{i}_{A}$, and the cohomology groups $H^{i}(\Omega^{*}_{A})$ are canonically $A$-modules via the Frobenius $a \mapsto a^{p}$.
There is a unique $A$-linear multiplicative map
\[
\gamma_{A} \colon \Omega^{i}_{A} \to H^{i}(\Omega^{*}_{A})
\]
characterized by $\gamma_{A}(1) = 1$ and $\gamma_{A}(dx) = x^{p-1}dx$, see \cite[Proof of Thm.~7.2]{Katz-nilpotent}. The map $\gamma_{A}$ is usually denoted by $C^{-1}$ and is called the \emph{inverse Cartier operator}. 
The classical theorem of Cartier \cite[Thm.~7.2]{Katz-nilpotent} says that $\gamma_{A}$ is
an isomorphism provided that $A$ is smooth over a perfect field. If $\gamma_{A}$ is an isomorphism,  the inverse induces a map $C\colon Z\Omega^{i}_{A} \to \Omega^{i}_{A}$ called the \emph{Cartier operator}.
In the appendix (Corollary \ref{cor:Cartier-iso-valuation-ring}(iii)) we explain a proof
of the following
result due to Gabber~\cite{Gabber_letter}.
\begin{thm}
	\label{thm:Cartier-iso-valuation-ring}
Let $V$ be a valuation ring of characteristic $p$. Then the inverse Cartier operator $\gamma_{V}$ is an isomorphism.
\end{thm}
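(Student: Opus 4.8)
The plan is to deduce the isomorphism from the classical Cartier isomorphism over the fraction field $F=\Frac(V)$, by controlling how the de Rham complex of $V$ — its cycles, boundaries and cohomology — behaves under the localization $V\hookrightarrow F$.

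First I would reduce to the case that $F$ is finitely generated over $\F_{p}$. The inverse Cartier operator is natural in the ring, and both $\Omega^{\bullet}$ and its cohomology commute with filtered colimits; writing $V$ as the filtered colimit of the valuation subrings $V\cap F_{\alpha}$, with $F_{\alpha}$ running over the finitely generated subfields of $F$, therefore exhibits $\gamma_{V}$ as the filtered colimit of the maps $\gamma_{V\cap F_{\alpha}}$, so it suffices to treat each $V\cap F_{\alpha}$. Hence we may assume $[F:F^{p}]=p^{d}$ with $d=\pdim(F)=\trdeg_{\F_{p}}(F)<\infty$. Note that $\gamma_{F}$ is an isomorphism in any case: the classical theorem of Cartier gives this for smooth $\F_{p}$-algebras, and an arbitrary field of characteristic $p$ is a filtered colimit of smooth $\F_{p}$-algebras.

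The heart of the matter is to establish three integrality properties of the valuation ring $V$: that $\Omega^{1}_{V}$ is flat (equivalently torsion-free) over $V$; that $H^{i}(\Omega^{\bullet}_{V})$ is torsion-free over $V$ for every $i$ — equivalently, that a differential form over $V$ which is exact over $F$ is already exact over $V$; and that the Cartier operator $C=\gamma_{F}^{-1}$ on $Z\Omega^{i}_{F}$ carries $Z\Omega^{i}_{V}$ into $\Omega^{i}_{V}$. Granting these, the theorem follows. Since $\Omega^{i}_{V}=\bigwedge^{i}\Omega^{1}_{V}$ is then flat, it embeds into $\Omega^{i}_{F}$, and by flat base change $H^{i}(\Omega^{\bullet}_{F})=H^{i}(\Omega^{\bullet}_{V})\otimes_{V}F$, so the second property yields $H^{i}(\Omega^{\bullet}_{V})\hookrightarrow H^{i}(\Omega^{\bullet}_{F})$; the square relating $\gamma_{V}$ to $\gamma_{F}$ then shows $\gamma_{V}$ injective, as $\gamma_{F}$ is. For surjectivity, represent a class of $H^{i}(\Omega^{\bullet}_{V})$ by a cycle $\omega\in Z\Omega^{i}_{V}$; by the third property $C\omega\in\Omega^{i}_{V}$, the explicit cycle $\eta$ representing $\gamma_{F}(C\omega)$ lies in $Z\Omega^{i}_{V}$, and $\omega-\eta\in\Omega^{i}_{V}$ is exact over $F$, hence exact over $V$, so the class of $\omega$ equals $\gamma_{V}(C\omega)$.

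The obstacle I expect to be decisive is proving these integrality properties for a general valuation ring, which may be non-noetherian and non-$F$-finite and for which $\Omega^{1}_{V}$ need not be a free, or even finitely generated, $V$-module — so the usual proof of the Cartier isomorphism in the smooth case, via the Koszul-type decomposition of $\Omega^{\bullet}$ attached to a $p$-basis, does not apply verbatim. I would organize the argument along the tower $V^{p}=V_{0}\subset V_{1}\subset\cdots\subset V_{d}=V$ of valuation rings with $\Frac(V_{j})=F^{p}(t_{1},\dots,t_{j})$, where $t_{1},\dots,t_{d}\in V$ is a $p$-basis of $F$ chosen inside $V$ (possible since the image of $\Omega^{1}_{V}\to\Omega^{1}_{F}$ spans): each inclusion $V_{j-1}\subset V_{j}$ is a purely inseparable extension of valuation rings of degree $p$ with $t_{j}^{p}\in V_{j-1}$, and one relates the de Rham complex of $V_{j}$ to that of $V_{j-1}$ one generator at a time. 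The delicate point in this step is to control, by means of the valuation, the discrepancy between $V_{j}$ and the (in general non-normal) subring $V_{j-1}[t_{j}]$ — that is, to clear denominators in a valuation-theoretically optimal way; this is exactly what Gabber's argument supplies, and it is carried out in detail in the appendix.
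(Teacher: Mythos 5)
Your proposal is essentially the paper's approach: both reduce to a finitely generated fraction field, then work along a tower of degree-$p$ valuation ring extensions $V^{p}=V_{0}\subset V_{1}\subset\cdots\subset V_{d}=V$ corresponding to a $p$-basis chosen inside $V$, and both rely for the decisive step on Gabber's valuation-theoretic approximation lemma to clear denominators (this is Theorem~\ref{thm.elemext} and Proposition~\ref{prop.approx} in the appendix). The paper packages the conclusion a little differently — it isolates the intermediate ``elementary'' subrings $V^{p}\subset W\subset V$ admitting a finite $p$-basis, proves the relative Cartier isomorphism and flatness of $\Omega^{1}$ directly for those from the explicit presentation $W\cong V^{p}[X_{1},\ldots,X_{r}]/(X_{i}^{p}-x_{i}^{p})$, and passes to the filtered colimit, rather than first extracting your integrality properties (a)--(c) — but this is the same content.
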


In the following, we need some nonabelian derived functors, see \cite[\S5.5.8]{htt} and \cite[Ch.~25]{sag} for a  general treatment. 
We denote by $\Poly_{\F_{p}}$  the category of polynomial $\F_{p}$-algebras in finitely many variables, and by $\SCR_{\F_{p}}$  the $\infty$-category  obtained from the category of simplicial commutative $\F_{p}$-algebras by inverting the quasi-isomorphisms. Then $\Poly_{\F_{p}}$ is a full subcategory of $\SCR_{\F_{p}}$.  Moreover, if $D$ is any $\infty$-category that admits sifted colimits, then any functor $F\colon \Poly_{\F_{p}} \to D$  admits an essentially unique extension $LF\colon \SCR_{\F_{p}} \to D$ which preserves sifted colimits. The functor $LF$ is called the derived functor of $F$.

If $F$ is a functor $\SCR_{\F_{p}} \to D$, we still denote by $LF$ the derived functor of the restriction of $F$ to $\Poly_{\F_{p}}$. There is a natural transformation $LF \to F$. If the functor $F$ commutes with filtered colimits, then $LF(A) \simeq \colim_{\Delta^{\op}} F(P)$ where $P \xrightarrow{\sim} A$ is a simplicial resolution by free $\F_{p}$-algebras.

\begin{ex}
For $F = \Omega_{(-)}$, viewed as functor on $\Poly_{\F_{p}}$ with values in the derived  $\infty$-category $D(\F_{p})$ of  $H\F_{p}$-modules, 
we obtain the functor $L\Omega_{(-)}\colon \SCR_{\F_{p}} \to D(\F_{p})$. For any $\F_{p}$-algebra $A$, $L\Omega_{A}$ is equivalent to the underlying $H\F_{p}$-module of the cotangent complex $\bbL_{A/\F_{p}} \in D(A)$, see \cite[\S25.3]{sag}.

Moreover, $L\Omega^{i}_{A}$ is equivalent to the underlying $H\F_{p}$-module of the derived exterior power $L\bigwedge^{i}_{A}\bbL_{A/\F_{p}}$, see \cite[\S25.2]{sag}. This follows directly from the constructions and the fact that for a polynomial $\F_{p}$-algebra $P$ the $P$-module $\Omega_{P}$ is free.
\end{ex}

The following result is essentially due to Gabber and Ramero.
\begin{thm}
	\label{thm:Gabber-Ramero-Omega-i}
Let $V$ be a valuation ring of characteristic $p$, and let $i \geq 0$. Then the following hold.
\begin{enumerate} 
\item $L\Omega^{i}_{V} \simeq \Omega^{i}_{V}[0]$.
\item The $V$-module $\Omega^{i}_{V}$ is torsion-free or, equivalently, $\Omega^{i}_{V}$  flat.
\end{enumerate}
\end{thm}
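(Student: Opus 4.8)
The plan is to reduce both assertions to the single statement $(\star)$: \emph{the cotangent complex $\bbL_{V/\F_p}$ is equivalent to $\Omega^1_V[0]$ and $\Omega^1_V$ is a flat $V$-module.} This is essentially the theorem of Gabber and Ramero referred to in the statement; everything else is formal.

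\emph{Deduction of the theorem from $(\star)$.} Over a valuation ring a module is flat if and only if it is torsion-free, because every finitely generated torsion-free $V$-module is free; thus the two formulations in (2) agree, and by Lazard's theorem $\Omega^1_V$ is a filtered colimit of finite free $V$-modules. The derived exterior power $L\bigwedge^i$ preserves sifted colimits and restricts to the ordinary exterior power on finite free modules, so $L\bigwedge^i_V(\Omega^1_V[0])\simeq(\bigwedge^i_V\Omega^1_V)[0]$. Since $L\Omega^i_V\simeq L\bigwedge^i_V\bbL_{V/\F_p}$, $(\star)$ gives $L\Omega^i_V\simeq(\bigwedge^i_V\Omega^1_V)[0]=\Omega^i_V[0]$, which is (1); and $\Omega^i_V=\bigwedge^i_V\Omega^1_V$ is itself a filtered colimit of finite free modules, hence flat, which is (2).

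\emph{Reduction of $(\star)$ to finitely generated fields.} Write $F=\Frac(V)$ as the filtered union of its finitely generated subfields $F'$, and correspondingly $V=\colim_{F'}(V\cap F')$. Each $V\cap F'$ is a valuation ring of $F'$, and each transition map $V\cap F'\hookrightarrow V\cap F''$ is flat, since $V\cap F''$ is a torsion-free, hence flat, module over the valuation ring $V\cap F'$. As $\bbL_{(-)/\F_p}$ commutes with filtered colimits, $\bbL_{V/\F_p}=\colim_{F'}\bbL_{(V\cap F')/\F_p}$; a filtered colimit of complexes concentrated in degree $0$ is again concentrated in degree $0$, and its $H_0$ is $\Omega^1_V=\colim_{F'}(\Omega^1_{V\cap F'}\otimes_{V\cap F'}V)$, a filtered colimit of flat $V$-modules, hence flat. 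So it suffices to prove $(\star)$ when $V$ is the valuation ring of a finitely generated field extension of $\F_p$.

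\emph{The remaining input.} For such $V$, $(\star)$ is the content of Gabber and Ramero's work, which I would cite rather than reprove. One convenient reformulation uses the perfection $V_\infty=V^{1/p^\infty}=\colim(V\xrightarrow{F}V\xrightarrow{F}\cdots)$: it is again a valuation ring, $V\to V_\infty$ is a flat local homomorphism of local rings hence faithfully flat, and $\bbL_{V_\infty/\F_p}=0$ since $V_\infty$ is perfect, so the transitivity triangle for $\F_p\to V\to V_\infty$ gives $\bbL_{V/\F_p}\otimes^{\bbL}_V V_\infty\simeq\bbL_{V_\infty/V}[-1]$. Moreover $\Omega^1_{V_\infty/V}=0$, because $\Omega^1_{V_\infty/V}=\colim_n\Omega^1_{V^{1/p^n}/V}$ and the transition maps vanish ($d(a^{1/p^n})=p\,(a^{1/p^{n+1}})^{p-1}\,d(a^{1/p^{n+1}})=0$ in characteristic $p$); hence $\bbL_{V_\infty/V}$ is concentrated in degrees $\ge1$, and by faithfully flat descent $(\star)$ is equivalent to $\bbL_{V_\infty/V}$ having the form $M[1]$ with $M$ a flat $V_\infty$-module. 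I expect this to be the main obstacle. The ``generic'' part is automatic — $F/\F_p$ is separably generated (as $\F_p$ is perfect), so $\bbL_{F/\F_p}\simeq\Omega^1_F[0]$, and $\bbL_{F/V}=0$ because $F$ is a localization of $V$, so $\bbL_{V/\F_p}\otimes_V F\simeq\Omega^1_F[0]$ already has the right shape — and all the difficulty is concentrated in ruling out torsion in the cotangent complex of the (typically non-noetherian) valuation ring $V$, which is exactly what Gabber and Ramero's argument provides.
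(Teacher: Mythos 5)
Your proposal is correct and follows essentially the same route as the paper: reduce everything to the case $i=1$ (your statement $(\star)$), which you, like the paper, attribute to Gabber and Ramero, and then deduce the higher exterior powers formally via Lazard's theorem and the fact that $L\bigwedge^i$ agrees with the ordinary exterior power on flat modules (the paper cites \cite[Prop.~25.2.3.4]{sag} for this). The paper's proof is literally this, with the sole structural difference that it cites Gabber--Ramero's Theorem~6.5.12 and Corollary~6.5.21 directly for arbitrary valuation rings, so your intermediate reduction to $\Frac(V)$ finitely generated over $\F_p$ is a superfluous (though harmless and correct) detour; interestingly, that reduction does reappear in the paper's appendix, where Gabber's alternative argument for $(\star)$ proceeds by writing $K$ as a filtered union of finitely generated subfields and by comparing $V$ with its perfection $V^{1/p^\infty}$, very much in the spirit of the transitivity-triangle sketch you give at the end.
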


\begin{proof}
  For $i=0$ both claims are clear; for $i=1$, assertion (1) follows from
  \cite[Thm.~6.5.12]{GabberRamero} and (2) is Corollary 6.5.21 there. An alternative
  argument due to Gabber is explained in the appendix,
  see Corollary~\ref{cor:Cartier-iso-valuation-ring}. Let now $i \geq 2$. Since
  $\bbL_{V} \simeq \Omega_{V}[0]$ and $\Omega_{V}$ is torsion-free and hence flat, it
  follows that
\[
L\Omega^{i}_{V} \simeq L\bigwedge\nolimits^{i}_{V}\bbL_{V} \simeq \bigwedge\nolimits^{i}_{V}\Omega_{V} = \Omega^{i}_{V}[0],
\]
see \cite[Prop.~25.2.3.4]{sag}.
It remains to prove (2) for $i \geq 2$. As $\Omega_{V}$ is torsion free, it is isomorphic
to a filtered colimit of finitely generated torsion free modules, which are  free by~\cite[VI.3.6~Lemma~1]{Bourbaki_comalg}. Since exterior powers of free modules are free, and since taking exterior powers commutes with filtered colimits, $\Omega^{i}_{V}$ is a filtered colimit of free modules and hence flat.
\end{proof}

We want to prove the analog of Theorem~\ref{thm:Gabber-Ramero-Omega-i} for the de Rham--Witt groups. For this we need some preparations. 
\begin{lemma}
	\label{lem:derived-BOmega}
Let $V$ be a valuation ring of characteristic $p$. Then $LB\Omega^{i}_{V} \simeq B\Omega^{i}_{V}[0]$ and $LZ\Omega^{i}_{V} \simeq Z\Omega^{i}_{V}[0]$.
\end{lemma}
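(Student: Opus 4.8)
The plan is to bootstrap from the two facts about $V$ that are already available: the Cartier isomorphism $\gamma_V$ (Theorem~\ref{thm:Cartier-iso-valuation-ring}) and the identification $L\Omega^i_V\simeq\Omega^i_V[0]$ (Theorem~\ref{thm:Gabber-Ramero-Omega-i}(1)), realized by the counit $L\Omega^i_V\to\Omega^i_V$. Write $\mathcal{H}^i := H^i(\Omega^*_{(-)})$. On $\Poly_{\F_p}$ the functors $B\Omega^i_{(-)}$, $Z\Omega^i_{(-)}$, $\mathcal{H}^i$ commute with filtered colimits (filtered colimits are exact for $\F_p$-vector spaces), so their derived functors are computed by simplicial resolutions; in particular the two tautological short exact sequences of functors on $\Poly_{\F_p}$
\[
0\to Z\Omega^i_{(-)}\to\Omega^i_{(-)}\xrightarrow{d}B\Omega^{i+1}_{(-)}\to 0,\qquad 0\to B\Omega^i_{(-)}\to Z\Omega^i_{(-)}\to\mathcal{H}^i\to 0
\]
become cofibre sequences in $D(\F_p)$ after applying $L$, compatibly with the original sequences via the counit. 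Moreover each of $LB\Omega^i_V$, $LZ\Omega^i_V$, $L\mathcal{H}^i_V$ is connective, being a geometric realization of a simplicial $\F_p$-module.

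First I would identify $L\mathcal{H}^i_V$. The inverse Cartier operator $\gamma$ is a natural transformation $\Omega^i_{(-)}\to\mathcal{H}^i$ of functors on $\Poly_{\F_p}$ which is an isomorphism there, by the classical theorem of Cartier applied to the (smooth) polynomial $\F_p$-algebras. Hence $L\gamma\colon L\Omega^i\xrightarrow{\sim}L\mathcal{H}^i$, and in the commutative square relating $L\gamma$, the two counit maps, and $\gamma_V$, the map $L\Omega^i_V\to\Omega^i_V$ is an equivalence by Theorem~\ref{thm:Gabber-Ramero-Omega-i}(1) and $\gamma_V$ is an equivalence by Theorem~\ref{thm:Cartier-iso-valuation-ring}; therefore the counit $L\mathcal{H}^i_V\to H^i(\Omega^*_V)$ is an equivalence, and in particular $L\mathcal{H}^i_V\simeq H^i(\Omega^*_V)[0]$.

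Next I would prove the two assertions simultaneously by induction on $i$, weaving the two cofibre sequences together. For $i=0$ one has $B\Omega^0=0$, so $LB\Omega^0_V=0=B\Omega^0_V[0]$, and the second cofibre sequence gives $LZ\Omega^0_V\xrightarrow{\sim}L\mathcal{H}^0_V\simeq Z\Omega^0_V[0]$ (note $\mathcal{H}^0=Z\Omega^0$), the equivalence being the counit. For the inductive step, assume the counit $LZ\Omega^{i-1}_V\to Z\Omega^{i-1}_V[0]$ is an equivalence. Applying the first cofibre sequence with $i$ replaced by $i-1$ gives $LB\Omega^i_V\simeq\cofib\bigl(LZ\Omega^{i-1}_V\to L\Omega^{i-1}_V\bigr)$; under the inductive hypothesis and Theorem~\ref{thm:Gabber-Ramero-Omega-i}(1) this map is identified, via the counits, with the inclusion $Z\Omega^{i-1}_V[0]\hookrightarrow\Omega^{i-1}_V[0]$, whose cofibre is the cokernel $B\Omega^i_V[0]$, and the counit $LB\Omega^i_V\to B\Omega^i_V$ is the resulting equivalence. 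Now the second cofibre sequence reads $B\Omega^i_V[0]\to LZ\Omega^i_V\to H^i(\Omega^*_V)[0]$; since the outer terms are discrete and $LZ\Omega^i_V$ is connective, the long exact homotopy sequence shows $LZ\Omega^i_V$ is discrete with $\pi_0$ an extension of $H^i(\Omega^*_V)$ by $B\Omega^i_V$. Finally, the counit $LZ\Omega^i_V\to Z\Omega^i_V$ maps this extension to the tautological sequence $0\to B\Omega^i_V\to Z\Omega^i_V\to H^i(\Omega^*_V)\to 0$ inducing isomorphisms on the outer terms, so it is an isomorphism by the five lemma; hence $LZ\Omega^i_V\simeq Z\Omega^i_V[0]$, closing the induction.

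The main obstacle I anticipate is the bookkeeping with the counit maps: to obtain the sharp conclusions $LB\Omega^i_V\simeq B\Omega^i_V[0]$ and $LZ\Omega^i_V\simeq Z\Omega^i_V[0]$ — rather than merely ``$LB\Omega^i_V$ and $LZ\Omega^i_V$ are discrete'' — one must track at every stage that the relevant equivalences are realized by the natural counit maps, so that the five lemma applies with the identity on the outer terms. A secondary point deserving an explicit statement is the naturality of the inverse Cartier operator in the $\F_p$-algebra (which follows from its characterization together with the identity $\gamma_A(dg)=g^{p-1}dg$ for $g\in A$); this is what allows one to pass from the classical Cartier isomorphism for polynomial rings to the identification of $L\mathcal{H}^i$.
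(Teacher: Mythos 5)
Your argument is correct and is essentially the approach the paper takes: both proofs run an induction on $i$ using the two short exact sequences of functors on $\Poly_{\F_p}$ (the $d$-sequence $0\to Z\Omega^i\to\Omega^i\to B\Omega^{i+1}\to 0$, and the Cartier sequence), together with Theorem~\ref{thm:Cartier-iso-valuation-ring} and Theorem~\ref{thm:Gabber-Ramero-Omega-i}(1) for $V$, comparing each derived cofibre sequence to its classical counterpart via the counit. The only cosmetic difference is bookkeeping: you isolate $\mathcal{H}^i=H^i(\Omega^*_{(-)})$ and first show $L\mathcal{H}^i_V\simeq H^i(\Omega^*_V)[0]$ via $\gamma$, then run a simultaneous induction on $B$ and $Z$; the paper instead writes the second sequence as $0\to B\Omega^i\to Z\Omega^i\xrightarrow{C}\Omega^i\to 0$ (i.e.\ pre-composes with the Cartier isomorphism $\mathcal{H}^i\cong\Omega^i$), observes once that this gives ``the $B$-claim implies the $Z$-claim,'' and then inducts on $B$ alone — which also lets it avoid the explicit discreteness-plus-five-lemma step for $LZ\Omega^i_V$, since the 2-out-of-3 property for cofibre sequences applies directly.
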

\begin{proof}
By the Cartier isomorphism, recalled at the beginning of this section, we have the following short exact sequence of functors on $\Poly_{\F_{p}}$:
\[
0 \to B\Omega^{i} \to Z\Omega^{i} \xrightarrow{C} \Omega^{i} \to 0
\]
Taking derived functors and evaluating at $V$ we obtain the cofibre sequence
\begin{equation*}
\begin{tikzcd}
LB\Omega^{i}_{V} \arrow[r] & LZ\Omega^{i}_{V} \arrow[r] & L\Omega^{i}_{V}.
\end{tikzcd}
\end{equation*}
As by Theorem~\ref{thm:Cartier-iso-valuation-ring} the inverse Cartier operator $\gamma_{V}$ is also an isomorphism for the valuation ring $V$, we also have  a cofibre sequence
\[
\begin{tikzcd}
B\Omega^{i}_{V}[0] \arrow[r]& Z\Omega^{i}_{V}[0] \arrow[r, "C"] & \Omega^{i}_{V}[0].
\end{tikzcd}
\]
As the inverse Cartier operator is natural, the following diagram, in which the vertical maps are the canonical ones, commutes:
\begin{equation*}
\begin{tikzcd}
LB\Omega^{i}_{V} \arrow[r]\arrow[d] & LZ\Omega^{i}_{V} \arrow[r]\arrow[d] & L\Omega^{i}_{V} \arrow[d, "\simeq"]\\ 
B\Omega^{i}_{V}[0] \arrow[r]& Z\Omega^{i}_{V}[0] \arrow[r] & \Omega^{i}_{V}[0].
\end{tikzcd}
\end{equation*}
The right vertical map in this diagram is an equivalence by Theorem~\ref{thm:Gabber-Ramero-Omega-i}. So we see that if we prove the assertion about $B\Omega^{i}$, then also the assertion about $Z\Omega^{i}$ follows. We now argue by induction on $i$. As $B\Omega^{0} = 0$, the assertion is clearly true in the case $i = 0$.
Similarly as above, the exact sequence of functors
\[
0 \to Z\Omega^{i} \to \Omega^{i} \xrightarrow{d} B\Omega^{i+1} \to 0
\]
gives rise to the following diagram of cofibre sequences
\[
\begin{tikzcd}
LZ\Omega^{i}_{V} \arrow[r]\arrow[d] & L\Omega^{i}_{V} \arrow[r]\arrow[d, "\simeq"] & LB\Omega^{i+1}_{V} \arrow[d] \\
Z\Omega^{i}_{V}[0] \arrow[r] & \Omega^{i}_{V}[0] \arrow[r] & B\Omega^{i+1}_{V}[0].
\end{tikzcd}
\]
By induction, the left vertical map is an equivalence, hence so is the right vertical map.
\end{proof}

We next recall the definition of the higher boundaries and cycles in the de Rham complex from \cite[I.2.2]{Illusie-drW}. Let  $A$ be an  $\F_{p}$-algebra for which the inverse Cartier operator $\gamma_{A}\colon \Omega^{i}_{A} \to H^{i}(\Omega^{*}_{A})$ is an isomorphism, for example a polynomial algebra or a valuation ring.
One defines the chain of subgroups
\begin{multline*}
0 = B_{0}\Omega^{i}_{A} \subset B_{1}\Omega^{i}_{A} \subset \cdots \subset B_{n}\Omega^{i}_{A} \subset \cdots \\
	\cdots \subset Z_{n}\Omega^{i}_{A} \subset \cdots \subset Z_{1}\Omega^{i}_{A} \subset Z_{0}\Omega^{i}_{A} = \Omega^{i}_{A}
\end{multline*}
inductively by setting $B_{1}\Omega^{i}_{A} = B\Omega^{i}_{A}$, $Z_{1}\Omega^{i}_{A} = Z\Omega^{i}_{A}$ and requiring that $\gamma$ induces isomorphisms
\begin{equation}
	\label{eq:def:higher-cycles-and-boundaries}
B_{n}\Omega^{i}_{A} \xrightarrow{\simeq} B_{n+1}\Omega^{i}_{A}/B\Omega^{i}_{A} \quad  \text{ and } \quad
Z_{n}\Omega^{i}_{A} \xrightarrow{\simeq} Z_{n+1}\Omega^{i}_{A}/B\Omega^{i}_{A}. 
\end{equation}
Since the $B_{n}\Omega^{i}$ and $Z_{n}\Omega^{i}$ define functors on $\Poly_{\F_{p}}$, we get
 derived functors $LB_{n}\Omega^{i}$ and $LZ_{n}\Omega^{i}$ defined on $\SCR_{\F_{p}}$. In particular, we may evaluate them on any $\F_{p}$-algebra.

\begin{lemma}
	\label{lem:derived-higher-boundaries}
Let $V$ be a valuation ring of characteristic $p$. Then $LB_{n}\Omega^{i}_{V} \simeq B_{n}\Omega^{i}_{V}[0]$ and $LZ_{n}\Omega^{i}_{V} \simeq Z_{n}\Omega^{i}_{V}[0]$.
\end{lemma}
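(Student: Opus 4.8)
The plan is to induct on $n$, following the proof of Lemma~\ref{lem:derived-BOmega} closely. The case $n=0$ is immediate, as $B_0\Omega^i=0$ and $Z_0\Omega^i=\Omega^i$, the latter being covered by Theorem~\ref{thm:Gabber-Ramero-Omega-i}; and the case $n=1$ is exactly Lemma~\ref{lem:derived-BOmega}, since $B_1\Omega^i = B\Omega^i$ and $Z_1\Omega^i=Z\Omega^i$. In contrast to Lemma~\ref{lem:derived-BOmega}, no interaction between different degrees $i$ occurs, so I would fix $i$ and induct on $n$ alone.

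For the inductive step, the key point is that the defining isomorphisms~\eqref{eq:def:higher-cycles-and-boundaries} yield, for every $n\geq 1$, short exact sequences of $\F_p$-linear functors on $\Poly_{\F_p}$
\[
0 \to B\Omega^i \to B_{n+1}\Omega^i \to B_n\Omega^i \to 0
\qquad\text{and}\qquad
0 \to B\Omega^i \to Z_{n+1}\Omega^i \to Z_n\Omega^i \to 0,
\]
the right-hand maps being the projections modulo $B\Omega^i$ followed by the inverses of the isomorphisms~\eqref{eq:def:higher-cycles-and-boundaries}. Passing to derived functors and evaluating at $V$ produces cofibre sequences in $D(\F_p)$, which I would assemble, together with the canonical maps to the underived functors (as in the proof of Lemma~\ref{lem:derived-BOmega}), into the commutative diagram
\[
\begin{tikzcd}
LB\Omega^i_V \arrow[r]\arrow[d, "\simeq"] & LB_{n+1}\Omega^i_V \arrow[r]\arrow[d] & LB_n\Omega^i_V \arrow[d, "\simeq"] \\
B\Omega^i_V[0] \arrow[r] & B_{n+1}\Omega^i_V[0] \arrow[r] & B_n\Omega^i_V[0]
\end{tikzcd}
\]
and the analogous diagram with $Z_{n+1}\Omega^i$, $Z_n\Omega^i$ in place of $B_{n+1}\Omega^i$, $B_n\Omega^i$ in both rows. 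Since $\gamma_V$ is an isomorphism by Theorem~\ref{thm:Cartier-iso-valuation-ring}, Illusie's higher boundaries and cycles $B_n\Omega^i_V$, $Z_n\Omega^i_V$ are defined and the bottom rows are obtained by evaluating the above short exact sequences of functors at $V$; in particular they are cofibre sequences. The left vertical map is an equivalence by Lemma~\ref{lem:derived-BOmega}, the right vertical maps are equivalences by the inductive hypothesis, and therefore the middle vertical maps are equivalences, which completes the induction.

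I expect the proof to be essentially bookkeeping, with no substantial obstacle; the only slightly delicate issues are the two that already underlie Lemma~\ref{lem:derived-BOmega}, namely that Illusie's higher boundaries and cycles are defined for $V$ and fit into the above short exact sequences — which is where Theorem~\ref{thm:Cartier-iso-valuation-ring} is needed — and that a short exact sequence of $\F_p$-linear functors on $\Poly_{\F_p}$ induces a cofibre sequence of derived functors compatibly with the canonical maps, which holds because on free $\F_p$-algebras the sequences are termwise exact and sifted colimits preserve cofibre sequences in $D(\F_p)$.
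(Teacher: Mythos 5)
Your proof is correct and follows the paper's argument: the paper likewise inducts on $n$ with base case Lemma~\ref{lem:derived-BOmega}, uses the short exact sequence $0 \to B\Omega^i \to B_{n+1}\Omega^i \xrightarrow{C} B_n\Omega^i \to 0$ of functors on $\Poly_{\F_p}$ (and the analogous one for $Z_n$), and passes to derived functors exactly as you describe. Your remark that the degree $i$ can be fixed — unlike in Lemma~\ref{lem:derived-BOmega} — is an accurate observation about the structure of the argument, and the justification in your final paragraph supplies precisely the two points the paper leaves implicit.
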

\begin{proof}
We argue by induction on $n$. The case $n=1$ is done in Lemma~\ref{lem:derived-BOmega}. By definition, we have the following exact sequence of functors on $\Poly_{\F_{p}}$:
\[
0 \to B\Omega^{i} \to B_{n+1}\Omega^{i} \xrightarrow{C} B_{n}\Omega^{i} \to 0
\]
Taking derived functors and evaluating at $V$ gives the inductive step, similarly as in the proof of Lemma~\ref{lem:derived-BOmega}. The proof for $Z_{n}\Omega^{i}$ is the same.
\end{proof}

Next we recall that to any $\F_{p}$-algebra $A$ one functorially associates its de Rham--Witt pro-complex $\{W_{n}\Omega^{*}_{A}\}_{n}$, see \cite{Illusie-drW}, \cite[Thm.~A]{HesselholtMadsen-drW}. The structure maps of the pro-system are denoted by $R$ and are called restriction maps. It follows directly from \cite[Thm.~I.1.3]{Illusie-drW} that the restriction maps are surjective.
We view $W_{n}\Omega^{i}$  as a functor on $\F_{p}$-algebras with values in $D(\Z)$. So we have its derived functor available.

\begin{prop}
	\label{prop:Witt-analog-of-Gabber-Ramero}
Let $V$ be a valuation ring of characteristic $p$ with field of fractions $F$, and let $n\geq 1$ and $i \geq 0$. Then the following hold.
	\begin{enumerate}
	\item $LW_{n}\Omega^{i}_{V} \simeq W_{n}\Omega^{i}_{V}[0]$
	\item The natural map $W_{n}\Omega^{i}_{V} \to W_{n}\Omega^{i}_{F}$ is injective.
	\end{enumerate} 
\end{prop}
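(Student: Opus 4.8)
The plan is to derive both assertions, by induction, from the results already established for $\Omega^i_V$, $B_n\Omega^i_V$ and $Z_n\Omega^i_V$ (Theorems~\ref{thm:Cartier-iso-valuation-ring}, \ref{thm:Gabber-Ramero-Omega-i} and Lemma~\ref{lem:derived-higher-boundaries}) by means of Illusie's structural analysis of the de Rham--Witt complex. Recall from \cite[I.3]{Illusie-drW} that for an $\F_p$-algebra $A$ on which the inverse Cartier operator $\gamma_A$ is an isomorphism --- for instance a polynomial $\F_p$-algebra, or, by Theorem~\ref{thm:Cartier-iso-valuation-ring}, the valuation ring $V$ --- the restriction maps equip $W_n\Omega^i_A$ with a finite filtration $\Fil^\bullet W_n\Omega^i_A$ (by the kernels of the iterated restrictions $W_n\Omega^i_A \to W_s\Omega^i_A$), functorial in $A$, whose associated graded pieces are, naturally in $A$, finite direct sums of $A$-modules of the form $\Omega^j_A/B_s\Omega^j_A$ and $\Omega^j_A/Z_s\Omega^j_A$ with $j \in \{i-1,i\}$ and $0 \le s < n$ (the comparison maps being induced by the operators $V^s$ and $dV^s$).

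For assertion (1), write $\theta_G \colon LG \to G$ for the natural transformation attached to a functor $G$ on $\F_p$-algebras (see Section~\ref{sec:deriform}). By Theorem~\ref{thm:Gabber-Ramero-Omega-i} and Lemma~\ref{lem:derived-higher-boundaries}, $\theta_G(V)$ is an equivalence for $G \in \{\Omega^j, B_s\Omega^j, Z_s\Omega^j\}$. Deriving the short exact sequences of functors $0 \to B_s\Omega^j \to \Omega^j \to \Omega^j/B_s\Omega^j \to 0$ and $0 \to Z_s\Omega^j \to \Omega^j \to \Omega^j/Z_s\Omega^j \to 0$, evaluating at $V$, and comparing via $\theta$ with the corresponding short exact sequences of $V$-modules, I obtain maps of cofibre sequences in which two of the three vertical maps are equivalences; hence so is the third. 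Thus $\theta_G(V)$ is an equivalence for $G = \Omega^j/B_s\Omega^j$ and $G = \Omega^j/Z_s\Omega^j$, hence for each graded piece of the filtration above, and hence, by dévissage along that finite filtration, for $G = W_n\Omega^i$. This is exactly (1).

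For assertion (2), I use that the filtration $\Fil^\bullet W_n\Omega^i$ is functorial in the ring, so that $W_n\Omega^i_V \to W_n\Omega^i_F$ is a filtered map and it suffices to prove injectivity on each graded piece, i.e.\ that $\Omega^j_V/B_s\Omega^j_V \to \Omega^j_F/B_s\Omega^j_F$ and $\Omega^j_V/Z_s\Omega^j_V \to \Omega^j_F/Z_s\Omega^j_F$ are injective for $j \in \{i-1,i\}$, $s < n$ (note $\Omega^j_V \hookrightarrow \Omega^j_F$ by Theorem~\ref{thm:Gabber-Ramero-Omega-i}(2)). Two inputs are needed. First, since $d(\eta/s^p) = d\eta/s^p$ for $\eta \in \Omega^{j-1}_V$ and $s \in V \setminus \{0\}$, and since every element of $\Omega^{j-1}_F$ can be written with a denominator that is a $p$-th power (using $\tfrac1t = \tfrac{t^{p-1}}{t^p}$), every element of $B\Omega^j_F$ has the form $d\eta/s^p$ with $\eta \in \Omega^{j-1}_V$, and every element of $Z\Omega^j_F$ has the form $\zeta/s^p$ with $\zeta \in Z\Omega^j_V$; iterating this along the ($p$-semilinear) inverse Cartier operator gives, for every $s$, that each element of $B_s\Omega^j_F$ (resp.\ $Z_s\Omega^j_F$) has the form $\beta/u^p$ with $\beta \in B_s\Omega^j_V$ (resp.\ $\zeta/u^p$ with $\zeta \in Z_s\Omega^j_V$). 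Hence, if an element $\omega \in \Omega^j_V$ lies in $B_s\Omega^j_F$ (resp.\ $Z_s\Omega^j_F$), then $u^p\omega \in B_s\Omega^j_V$ (resp.\ $u^p\omega \in Z_s\Omega^j_V$) for some $u \ne 0$. Second, the $V$-modules $\Omega^j_V/B_s\Omega^j_V$ and $\Omega^j_V/Z_s\Omega^j_V$ (with their natural Frobenius-twisted $V$-module structures) are torsion-free: this follows from Theorem~\ref{thm:Gabber-Ramero-Omega-i}(2), the Cartier short exact sequences $0 \to B\Omega^j_V \to Z\Omega^j_V \to \Omega^j_V \to 0$ and $0 \to Z\Omega^j_V \to \Omega^j_V \to B\Omega^{j+1}_V \to 0$ (Theorem~\ref{thm:Cartier-iso-valuation-ring}) and the defining sequences of the $B_s$ and $Z_s$, using repeatedly that over a domain a submodule of a torsion-free module, and an extension of two torsion-free modules, is again torsion-free. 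Combining the two inputs: an element of $\Omega^j_V$ mapping to $0$ in $\Omega^j_F/B_s\Omega^j_F$ (resp.\ $\Omega^j_F/Z_s\Omega^j_F$) lies in $B_s\Omega^j_F$ (resp.\ $Z_s\Omega^j_F$), hence some $u^p$-multiple of it lies in $B_s\Omega^j_V$ (resp.\ $Z_s\Omega^j_V$), hence by torsion-freeness it already lies in $B_s\Omega^j_V$ (resp.\ $Z_s\Omega^j_V$). This gives the required injectivity on graded pieces, and hence (2).

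The main obstacle is bookkeeping rather than conceptual: one must pin down the precise form of Illusie's associated graded pieces and keep careful track of the Frobenius twists in the various module structures (those on $B_s\Omega^j$, $Z_s\Omega^j$, on the operator $C$, and on the maps $V^s$ and $dV^s$), and one must verify the torsion-freeness statements by threading the Cartier short exact sequences through all the $B_s$, $Z_s$ and their quotients --- each step elementary, but collectively error-prone.
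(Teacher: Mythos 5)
Your overall strategy for both parts is the same as the paper's (dévissage along the restriction filtration on $W_n\Omega^i$, reducing to the modules $\Omega^j/B_s\Omega^j$ and $\Omega^j/Z_s\Omega^j$, which in turn follows from Lemma~\ref{lem:derived-higher-boundaries} and Theorem~\ref{thm:Gabber-Ramero-Omega-i}), but the organization is slightly different and there is one step you gloss over that the paper is careful to avoid.

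The issue is your opening claim that ``for an $\F_p$-algebra $A$ on which $\gamma_A$ is an isomorphism [\dots] the associated graded pieces are, naturally in $A$, finite direct sums of $A$-modules of the form $\Omega^j_A/B_s\Omega^j_A$ and $\Omega^j_A/Z_s\Omega^j_A$,'' citing Illusie~I.3 and applying it to $A = V$. First, a small inaccuracy: the graded piece $\gr^s W\Omega^i_A$ is an \emph{extension} of $\Omega^{i-1}_A/Z_s\Omega^{i-1}_A$ by $\Omega^i_A/B_s\Omega^i_A$, not a direct sum; this is harmless for the argument. More seriously, Illusie's Cor.~I.3.9 is stated for smooth $\F_p$-algebras, and your dévissage requires it to hold for the valuation ring $V$ itself: you compare the cofibre sequence of derived functors with an honest short exact sequence of $V$-modules, so you need the Illusie sequence to be exact at $V$, not merely as functors on $\Poly_{\F_p}$. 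This is plausible (the proof of I.3.9 is driven by the Cartier isomorphism, which you have for $V$ by Theorem~\ref{thm:Cartier-iso-valuation-ring}), but it is a nontrivial extension of Illusie's statement that needs an argument. The paper sidesteps it entirely: the Illusie sequence is used only as a short exact sequence of functors on $\Poly_{\F_p}$ to deduce concentration of $LW_n\Omega^i_V$ in degree~$0$, and the identification $\pi_0 LW_n\Omega^i_A \cong W_n\Omega^i_A$ is established \emph{separately, for all $\F_p$-algebras $A$}, by the explicit lifting argument based on \cite[Lemma~2.4]{GeisserHesselholtComplete} and the case $i = 0$. You omit this $\pi_0$ computation; without either it or a verification of Illusie~I.3.9 for $V$, the step ``hence for each graded piece of the filtration above'' is not justified.

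For part~(2) your route differs more noticeably from the paper's but is sound. You prove that the modules $\Omega^j_V/B_s\Omega^j_V$ and $\Omega^j_V/Z_s\Omega^j_V$ are torsion-free in the Frobenius-twisted $V$-module structure and combine this with the observation that $B_s\Omega^j_F$ (resp.\ $Z_s\Omega^j_F$) is the localization of $B_s\Omega^j_V$ (resp.\ $Z_s\Omega^j_V$) at nonzero $p$-th powers. The paper instead runs a direct induction on $s$, using the Cartier isomorphism $\Omega^j/Z_s\Omega^j \cong Z\Omega^j/Z_{s+1}\Omega^j$ to pass from $s$ to $s+1$; given the localization description you wrote down, the two formulations are essentially equivalent, and yours is a reasonable alternative. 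Do be careful when you appeal to ``extension of torsion-free by torsion-free'': for $\Omega^j/B_{s+1}$ you should use the exact sequence $0 \to Z\Omega^j/B_{s+1}\Omega^j \to \Omega^j/B_{s+1}\Omega^j \to \Omega^j/Z\Omega^j \to 0$ together with the Cartier isomorphism $Z\Omega^j/B_{s+1}\Omega^j \cong \Omega^j/B_s\Omega^j$, not the quotient sequence $0 \to B_{s+1}/B \to \Omega^j/B \to \Omega^j/B_{s+1} \to 0$, since a quotient of torsion-free by torsion-free need not be torsion-free. Part~(2) also implicitly relies on the Illusie sequence at $V$ via the identification of the graded pieces, so the gap noted above recurs; but once that is addressed the torsion-freeness argument closes cleanly.
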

\begin{proof}
(1) As a first step, we  treat the case $i=0$. Note that $W_{n}\Omega^{0}$ is the ring $W_{n}$ of Witt vectors of length $n$. We claim that in fact $LW_{n}(A) \simeq W_{n}(A)[0]$ for any $\F_{p}$-algebra $A$. This is clear for $n=1$. For $n >1$ the claim follows by induction using the short exact sequence 
\[
0 \to A \xrightarrow{V^{n}} W_{n+1}(A) \to W_{n}(A) \to 0
\]
which is natural in $A$.

We next prove that $\pi_{0}LW_{n}\Omega^{i}_{A} = W_{n}\Omega^{i}_{A}$ for any $\F_{p}$-algebra $A$.
Let $P \xrightarrow{\sim} A$ be a simplicial resolution of $A$ by free $\F_{p}$-algebras. 
We have to  show that the sequence
\[
 W_{n}\Omega^{i}_{P_{1}} \xrightarrow{\partial_{0} - \partial_{1}} W_{n}\Omega^{i}_{P_{0}} \to W_{n}\Omega^{i}_{A} \to 0
 \]
is exact. According to \cite[Lemma~2.4]{GeisserHesselholtComplete}, the  right-hand map is surjective and its kernel is generated by elements of the form $x\cdot \omega$ and $dx\cdot\omega$ with $x \in \ker( W_{n}(P_{0}) \to W_{n}(A))$ and $\omega \in W_{n}\Omega^{i}_{P_{0}}$ or $\omega\in W_{n}\Omega^{i-1}_{P_{0}}$, respectively. By the first step of the proof we know that $W_{n}(P) \xrightarrow{\sim} W_{n}(A)$ is a simplicial resolution. In particular, for $x$ as above there exists an element $y \in W_{n}(P_{1})$ such that $\partial_{0}(y) = x$, $\partial_{1}(y) = 0$. If $s$ denotes the degeneracy map $W_{n}(P_{0}) \to W_{n}(P_{1})$, then $y\cdot s(\omega)$ respectively $dy\cdot s(\omega)$ is a preimage of $x\cdot\omega$ respectively $dx\cdot\omega$ under $\partial_{0} - \partial_{1}$, thus showing the desired exactness.

It remains to prove that $LW_{n}\Omega^{i}_{V}$ is concentrated in degree 0 for $i \geq 1$ and $n \geq 1$.
We argue by induction on $n$. Since $W_{1}\Omega^{i} \cong \Omega^{i}$ (see \cite[Thm.~I.1.3]{Illusie-drW}), the case $n=1$ follows from Theorem~\ref{thm:Gabber-Ramero-Omega-i}(1).
Assume our assertion  is proven for some $n\geq 1$. 
Recall from \cite[I.3.1]{Illusie-drW} the canonical filtration on $W\Omega^{i}_{A} = \lim_{n} W_{n}\Omega^{i}_{A}$ given by 
\[
\Fil^{n}W\Omega^{i}_{A} = \ker ( W\Omega^{i}_{A} \xrightarrow{\text{can.}} W_{n}\Omega^{i}_{A} )
\]
 for any smooth $\F_{p}$-algebra $A$. Its associated graded pieces sit in a short exact sequence
\begin{equation}
	\label{seq:gr-drW}
	0 \to \gr^{n}W\Omega^{i}_{A} \to W_{n+1}\Omega^{i}_{A} \xrightarrow{R} W_{n}\Omega^{i}_{A} \to 0.
\end{equation}
Viewed  as a short exact sequence of functors in the  smooth $\F_{p}$-algebra $A$, the latter 
gives rise to a cofibre sequence of derived functors 
\begin{equation}
	\label{seq:cofibre-sequence-1}
L\gr^{n}W\Omega^{i} \to LW_{n+1}\Omega^{i} \to LW_{n}\Omega^{i}.
\end{equation}
By induction, it now suffices to show that $L\gr^{n}W\Omega^{i}_{V}$ is concentrated in degree 0.
By a fundamental result of Illusie  \cite[Cor.~I.3.9]{Illusie-drW}, there is a natural short exact sequence
\begin{equation}
	\label{seq:Illusie-fundamental-sequence}
	0 \to \Omega^{i}_{A} / B_{n}\Omega^{i}_{A} \to \gr^{n}W\Omega^{i}_{A} \to \Omega^{i-1}_{A} / Z_{n}\Omega^{i-1}_{A} \to 0
\end{equation}
for any smooth $\F_{p}$-algebra $A$. By the same argument as before, we now finish the proof of (1) by noting that $L(\Omega^{i}/B_{n}\Omega^{i})_{V}$ and $L(\Omega^{i-1}/Z_{n}\Omega^{i-1})_{V}$ are concentrated in degree 0. Indeed, this follows immediately from Lemma~\ref{lem:derived-higher-boundaries}  together with Theorem~\ref{thm:Gabber-Ramero-Omega-i}(1).

\medskip

\noindent (2) We again argue by induction on $n$. The case $n=1$ is Theorem~\ref{thm:Gabber-Ramero-Omega-i}(2). Note that by part (1) applied to the trivial valuation ring  $F$, we also have $LW_{n}\Omega^{i}_{F} \simeq W_{n}\Omega^{i}_{F}$ for all $n \geq 1$ and $i \geq 0$ and Lemma~\ref{lem:derived-higher-boundaries} holds with $V$ replaced by $F$. Using the cofibre sequence \eqref{seq:cofibre-sequence-1} and the result of part (1), we see that for the inductive step it suffices to show that 
\[
\pi_{0} L\gr^{n}W\Omega^{i}_{V} \to \pi_{0} L\gr^{n}W\Omega^{i}_{F}
\]
is injective for all $n \geq 0$. Then, using the cofibre sequence of derived functors obtained from \eqref{seq:Illusie-fundamental-sequence}, we  reduce to proving that the maps
\[
\Omega^{i}_{V}/B_{n}\Omega^{i}_{V} \to \Omega^{i}_{F}/B_{n}\Omega^{i}_{F} \quad \text{ and } \quad \Omega^{i}_{V}/Z_{n}\Omega^{i}_{V} \to \Omega^{i}_{F}/Z_{n}\Omega^{i}_{F}
\]
are injective for any $i \geq 0$ and $n \geq 1$.

We now prove the latter statement for the higher cycles by induction on $n$. To simplify notation, we drop the index $V$ or $F$, whenever a statement holds for both of them. In the  case $n=1$, the desired injectivity follows from the injectivity of $d\colon \Omega^{i}/Z\Omega^{i} \to \Omega^{i+1}$ and of $\Omega^{i+1}_{V} \to \Omega^{i+1}_{F}$, see Theorem~\ref{thm:Gabber-Ramero-Omega-i}(2). Assume, we have proven injectivity for some $n$. It follows from the definition of higher cycles \eqref{eq:def:higher-cycles-and-boundaries} that the inverse Cartier operator $\gamma$ induces an isomorphism 
\[
\Omega^{i}/Z_{n}\Omega^{i} \xrightarrow{\cong} Z\Omega^{i}/Z_{n+1}\Omega^{i}.
\]
Combining this isomorphism with the exact sequence
\[
0 \to Z\Omega^{i}/Z_{n+1}\Omega^{i} \to \Omega^{i}/Z_{n+1}\Omega^{i} \to \Omega^{i}/Z\Omega^{i} \to 0
\]
and the injectivity for $n=1$ gives the inductive step.
The proof for higher boundaries is  the same, replacing in the above formulas $Z_{n}$ by $B_{n}$ and $Z_{n+1}$ by $B_{n+1}$, and using the injection $\Omega^{i}/B\Omega^{i} \hookrightarrow \Omega^{i}$ given by the Cartier operator for the case $n=1$.
\end{proof}

\begin{cor}
	\label{cor:dRW-p-torsion-free}
The $p$-multiplication $p\colon W_{n}\Omega^{i}_{V} \to W_{n}\Omega^{i}_{V}$ factors as 
\[
W_{n}\Omega^{i}_{V} \xrightarrow{R} W_{n-1}\Omega^{i}_{V} \xrightarrow{\ul{p}} W_{n}\Omega^{i}_{V}
\]
and the induced map $\ul{p}$ is injective. In particular, the pro-group $\{ W_{n}\Omega^{i}_{V}\}_{n}$ is $p$-torsion-free.
\end{cor}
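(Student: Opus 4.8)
The plan is to first produce the factorization from Illusie's description of the canonical filtration together with the derived‑functor formalism already in place, and then to prove injectivity of $\ul p$ by induction on $n$; the inductive step will, via the short exact sequences coming from \eqref{seq:cofibre-sequence-1}, reduce the claim to injectivity of $\ul p$ on the associated graded pieces, where the Cartier isomorphism (Theorem~\ref{thm:Cartier-iso-valuation-ring}) is the decisive input.

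\textbf{The factorization.} For a polynomial $\F_p$-algebra $P$ one has, by Illusie \cite{Illusie-drW}, $\ker(R\colon W_n\Omega^i_P\to W_{n-1}\Omega^i_P)=V^{n-1}\Omega^i_P+dV^{n-1}\Omega^{i-1}_P$, and this is annihilated by $p$ because $V^{n-1}$ and $d$ are additive and $\Omega^\bullet_P$ is killed by $p$. Since $R$ is surjective, $p$ factors through $R$ on each $P$, and the factorizing map is natural in $P$; one thus obtains a natural transformation $\ul p\colon W_{n-1}\Omega^i\to W_n\Omega^i$ of functors on $\Poly_{\F_p}$ with $\ul p\circ R=p$ and, using surjectivity of $R$ once more, $R\circ\ul p=p$. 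Passing to derived functors and evaluating at $V$, Proposition~\ref{prop:Witt-analog-of-Gabber-Ramero}(1) identifies $LW_m\Omega^i_V$ with $W_m\Omega^i_V$ and the derived $R$ with the restriction map, yielding the asserted factorization over $V$; the map $\ul p$ is unique because $R$ is surjective over $V$, which is why one may speak of ``the'' induced map.

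\textbf{Injectivity: reduction to the graded pieces.} I would argue by induction on $n$, simultaneously in all $i$, the case $n=1$ being trivial since $W_0\Omega^i_V=0$. From \eqref{seq:cofibre-sequence-1}, Proposition~\ref{prop:Witt-analog-of-Gabber-Ramero}(1), and the concentration statements established in its proof, one gets for each $m$ a short exact sequence $0\to\gr^{m-1}W\Omega^i_V\to W_m\Omega^i_V\xrightarrow{R}W_{m-1}\Omega^i_V\to 0$, where $\gr^{m-1}W\Omega^i_V:=\pi_0L\gr^{m-1}W\Omega^i_V=\ker R$. Since $R\circ\ul p=\ul p\circ R$ (valid on $\Poly_{\F_p}$, hence over $V$), the map $\ul p\colon W_{n-1}\Omega^i_V\to W_n\Omega^i_V$ carries $\gr^{n-2}W\Omega^i_V$ into $\gr^{n-1}W\Omega^i_V$, giving a map of short exact sequences whose outer vertical maps are $\ul p\colon W_{n-2}\Omega^i_V\to W_{n-1}\Omega^i_V$ (injective by the inductive hypothesis) and the induced map $\ol{\ul p}\colon\gr^{n-2}W\Omega^i_V\to\gr^{n-1}W\Omega^i_V$; by the short five lemma it then suffices to show that $\ol{\ul p}$ is injective.

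\textbf{Injectivity on the graded pieces.} Here I would use the identity $p=VF$ on $W_m\Omega^i_A$, valid for every $\F_p$-algebra $A$: indeed $V(1)=p$ in $W_m(A)$ since this holds in $W_m(\F_p)=\Z/p^m$ and both sides are natural, and $VF(\omega)=V(1\cdot F\omega)=V(1)\cdot\omega$ by the projection formula. Since moreover $F\colon W_2\Omega^i_A\to\Omega^i_A$ lands in the cycles, kills $V\Omega^i_A$, sends $dV\eta$ to $d\eta$, and represents $\gamma_A$ on a Teichmüller lift, a direct computation gives $\ol{\ul p}(V^{n-2}\omega)=V^{n-1}(F\tilde\omega)$ for any lift $\tilde\omega\in W_2\Omega^i_V$ of $\omega$, and the analogous formula $\ol{\ul p}(dV^{n-2}\eta)=dV^{n-1}(F\tilde\eta)$. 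Feeding this into the derived version of the fundamental sequence~\eqref{seq:Illusie-fundamental-sequence} over $V$ — again concentrated in degree $0$ by Lemma~\ref{lem:derived-higher-boundaries} and Theorem~\ref{thm:Gabber-Ramero-Omega-i} — exhibits $\ol{\ul p}$ as a map from $0\to\Omega^i_V/B_{n-2}\Omega^i_V\to\gr^{n-2}W\Omega^i_V\to\Omega^{i-1}_V/Z_{n-2}\Omega^{i-1}_V\to 0$ to the same sequence with $n-2$ replaced by $n-1$, in which, by Theorem~\ref{thm:Cartier-iso-valuation-ring} and the defining isomorphisms~\eqref{eq:def:higher-cycles-and-boundaries}, the left vertical is the Cartier isomorphism $\gamma_V\colon\Omega^i_V/B_{n-2}\Omega^i_V\xrightarrow{\ \sim\ }Z\Omega^i_V/B_{n-1}\Omega^i_V$ followed by the inclusion $Z\Omega^i_V/B_{n-1}\Omega^i_V\hookrightarrow\Omega^i_V/B_{n-1}\Omega^i_V$, and the right vertical is $\gamma_V\colon\Omega^{i-1}_V/Z_{n-2}\Omega^{i-1}_V\xrightarrow{\ \sim\ }Z\Omega^{i-1}_V/Z_{n-1}\Omega^{i-1}_V$ followed by an analogous inclusion. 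Both outer verticals are injective, so $\ol{\ul p}$ is injective by the five lemma, completing the induction. The ``in particular'' then follows: if $px=0$ in $W_n\Omega^i_V$, then $px=\ul p(Rx)$, whence $Rx=0$, so $R$ annihilates the $p$-torsion of $W_n\Omega^i_V$ for every $n$, i.e. the pro-group $\{W_n\Omega^i_V\}_n$ has no $p$-torsion. The main obstacle is the identification in the last step of the two outer vertical maps as Cartier isomorphisms followed by inclusions of cycles: this requires carefully tracking the interaction of $F$, $V$ and $d$ with the subquotient description~\eqref{seq:Illusie-fundamental-sequence} of $\gr^\bullet W\Omega^\bullet_V$ and with the higher cycles and boundaries~\eqref{eq:def:higher-cycles-and-boundaries}; everything else is formal, given Proposition~\ref{prop:Witt-analog-of-Gabber-Ramero} and Theorem~\ref{thm:Cartier-iso-valuation-ring}.
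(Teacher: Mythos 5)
Your proof is correct, and the factorization part (passing through derived functors on $\Poly_{\F_p}$ and using Illusie's description of $\ker R$) is essentially the argument in the paper. For injectivity of $\ul p$, however, you take a genuinely different and considerably longer route: you run an induction on $n$ via the graded pieces $\gr^\bullet W\Omega^i_V$, identify $\ol{\ul p}$ on each graded piece through the fundamental sequence~\eqref{seq:Illusie-fundamental-sequence}, and invoke the Cartier isomorphism $\gamma_V$ plus the inclusions $Z\Omega^i/B_{n-1}\Omega^i\hookrightarrow\Omega^i/B_{n-1}\Omega^i$ and $Z\Omega^{i-1}/Z_{n-1}\Omega^{i-1}\hookrightarrow\Omega^{i-1}/Z_{n-1}\Omega^{i-1}$. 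The paper instead disposes of injectivity in one line: $\ul p$ is natural, $W_{n-1}\Omega^i_V\hookrightarrow W_{n-1}\Omega^i_F$ is injective by Proposition~\ref{prop:Witt-analog-of-Gabber-Ramero}(2), and $\ul p$ is injective on $W_\bullet\Omega^i_F$ because $F$ is ind-smooth over $\F_p$ and Illusie's Prop.~I.3.4 applies. In effect you are re-running, in a slightly different packaging, the inductive filtration analysis that the paper already carried out once in the proof of Proposition~\ref{prop:Witt-analog-of-Gabber-Ramero}(2); your approach is self-contained and makes the role of the Cartier isomorphism transparent, while the paper's is more economical by reusing that proposition directly. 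The computational identification you flag at the end — that the outer vertical maps are $\gamma_V$ composed with the obvious inclusions, via $p=VF$, $FV=p$, $FdV=d$, and Teichmüller representatives — does check out, so this is a correct alternative proof rather than a gap.
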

\begin{proof}
For $V$ replaced by any smooth $\F_{p}$-algebra the same assertion is proved in \cite[Prop.~I.3.4]{Illusie-drW} and remains true for ind-smooth $\F_{p}$-algebras.
Hence the $p$-multiplication on the derived functor $LW_{n}\Omega^{i}$ factors as 
\[
LW_{n}\Omega^{i} \xrightarrow{R} LW_{n-1}\Omega^{i} \xrightarrow{\ul{p}} LW_{n}\Omega^{i}
\]
Evaluating on $V$ and using Proposition~\ref{prop:Witt-analog-of-Gabber-Ramero}(1) gives the desired factorisation.
Using part (2) of the proposition and Illusie's result for the ind-smooth $\F_{p}$-algebra $F$ proves the asserted injectivity.
\end{proof}

We will use the above results to prove a vanishing result for topological cyclic homology of valuation rings in characteristic $p$.
Recall that for any (simplicial) $\F_{p}$-algebra $A$ one defines the spectra
\[
\TR^{n}(A;p) = \THH(A)^{C_{p^{n-1}}}
\]
as the genuine fixed points of the  topological Hochschild homology spectrum $\THH(A)$.
There are natural  maps $R, F \colon \TR^{n}(A;p) \to \TR^{n-1}(A;p)$ called restriction and Frobenius, and one defines the spectrum $\TR(A;p)  = \lim_{R} \TR^{n}(A;p)$. The topological cyclic homology of $A$ then sits in a fibre sequence
\begin{equation}
	\label{seq:def-TC}
\TC(A) \to \TR(A;p) \xrightarrow{1-F} \TR(A;p).
\end{equation}
Hesselholt--Madsen prove in \cite[Prop.~5.4]{HesselholtMadsenWitt} that $\TR_{*}^{n}(\F_{p})$ is isomorphic to the polynomial ring $\Z/p^{n}[\sigma_{n}]$ with $\sigma_{n}$ of degree 2 and the restriction map sends $\sigma_{n}$ to $p\sigma_{n-1}$ up to a unit in $\Z/p^{n-1}$.
Hesselholt shows in \cite{HesselholtTypicalCurves} that for  any $\F_{p}$-algebra $A$ and every $n\geq 1$ one gets a naturally induced map of graded rings
\begin{equation}
	\label{eq:canonical-map-drW-TR}
W_{n}\Omega^{*}_{A} [\sigma_{n}] \to  \TR_{*}^{n}(A;p)
\end{equation}
and that \eqref{eq:canonical-map-drW-TR} is an isomorphism provided that $A$ is smooth over $\F_{p}$, see \cite[Thm.~B]{HesselholtTypicalCurves}. Since both sides of \eqref{eq:canonical-map-drW-TR} commute with filtered colimits, \eqref{eq:canonical-map-drW-TR} is an isomorphism if $A$ is only assumed to be ind-smooth over $\F_{p}$.

\begin{prop}
	\label{lem:dRW-TR-iso}
Let $V$ be a valuation ring of characteristic $p$. Then the map 
\[
W_{n}\Omega^{*}_{V} [\sigma_{n}] \to  \TR_{*}^{n}(V;p)
\]
from \eqref{eq:canonical-map-drW-TR} is an isomorphism of graded rings. In particular, the natural map of pro-groups 
$\{W_{n}\Omega^{i}_{V} \}_{n} \to \{ \TR^{n}_{i}(V;p)\}_{n}$ is an isomorphism.
\end{prop}
\begin{proof}
Let $P \xrightarrow{\sim} V$ be a simplicial resolution by free $\F_{p}$-algebras. As the spectrum valued functor $\TR^{n}(-;p)$ on  $\SCR_{\F_{p}}$ commutes with sifted colimits (this follows inductively from the basic cofibre sequence \cite[(1.3.10)]{HesselholtTypicalCurves}), we have an equivalence
\(
\colim_{\Delta^{\mathrm{op}}} \TR^{n}(P;p) \simeq \TR^{n}(V;p)
\)
and hence a convergent spectral sequence 
\[
E^{1}_{rs} = \TR^{n}_{s}(P_{r};p) \Longrightarrow \TR^{n}_{r+s}(V;p).
\]
By Hesselholt's result above,  we have  $W_{n}\Omega^{s}_{P_{r}}[\sigma_{n}] \cong E^{1}_{rs}$ via the canonical map, and hence 
$\pi_{r}(LW_{n}\Omega^{s}_{V}[\sigma_{n}]) \cong E^{2}_{rs}$ by the definition of derived functors.
So Proposition~\ref{prop:Witt-analog-of-Gabber-Ramero} implies $E^{2}_{rs} = 0$ for $r > 0$ and $W_{n}\Omega^{s}_{V}[\sigma_{n}] \cong E^{2}_{0s}$. This gives the first claim. The second statement follows immediately from the fact that the transition map sends $\sigma_{n}$ to $p\sigma_{n-1}$ up to a unit.
\end{proof}

\begin{rmk}
With a  similar approach, the result of Proposition~\ref{lem:dRW-TR-iso} has been shown by Kelly and Morrow \cite[\S 2.3]{KellyMorrow} for Cartier-smooth $\F_{p}$-algebras.
\end{rmk}

\begin{prop}\label{prop:tc-vanishing-on-valuation-rings}
Let $V$ be a valuation ring of characteristic $p$ with field of fractions $F$. Then $TC_{i}(V; \Z/p) = 0$ for $i > \pdim(F)$.
\end{prop}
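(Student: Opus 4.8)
The plan is as follows. Set $r=\pdim(F)$, which by definition equals $\dim_F\Omega_F$. If $r=\infty$ there is nothing to prove, so assume $r$ is finite. I will first show that the pro-abelian groups $\{\TR^n_i(V;p)\}_n$ are pro-zero for $i>r$, deduce that $\TR_i(V;p)=0$ for $i>r$ and that $\TR_r(V;p)$ is $p$-torsion-free, and then conclude using the defining fibre sequence \eqref{seq:def-TC} for $\TC$.

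The first ingredient is that $W_n\Omega^i_V=0$ for all $n\geq1$ and all $i>r$. Since $\Omega^i_F=0$ for $i>r$, and since $\Omega^{r+1}_F=0$ forces $Z\Omega^r_F=\Omega^r_F$, the inductive definition \eqref{eq:def:higher-cycles-and-boundaries} (applicable thanks to the Cartier isomorphism for the valuation ring $F$, Theorem~\ref{thm:Cartier-iso-valuation-ring}) yields $Z_m\Omega^r_F=\Omega^r_F$ for all $m$. Now the argument in the proof of Proposition~\ref{prop:Witt-analog-of-Gabber-Ramero}(1) — Illusie's fundamental exact sequences \eqref{seq:gr-drW} and \eqref{seq:Illusie-fundamental-sequence} read as cofibre sequences of derived functors, evaluated at $F$, combined with Lemma~\ref{lem:derived-higher-boundaries} and Theorem~\ref{thm:Gabber-Ramero-Omega-i}(1) — shows that $W_n\Omega^i_F$ carries a finite filtration whose graded pieces lie among $\Omega^i_F$ and the quotients $\Omega^i_F/B_m\Omega^i_F$ and $\Omega^{i-1}_F/Z_m\Omega^{i-1}_F$ for $1\leq m<n$. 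For $i>r$ each of these vanishes (the first two since $\Omega^i_F=0$, the last since $\Omega^{i-1}_F=0$ if $i-1>r$ and $Z_m\Omega^r_F=\Omega^r_F$ if $i-1=r$), so $W_n\Omega^i_F=0$; by the injectivity statement in Proposition~\ref{prop:Witt-analog-of-Gabber-Ramero}(2), also $W_n\Omega^i_V=0$.

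Next, by Lemma~\ref{lem:dRW-TR-iso}, for $i>r$ we have $\TR^n_i(V;p)\cong\bigoplus W_n\Omega^{i-2j}_V\sigma_n^j$, the sum running over $j\geq1$ with $i-2j\geq0$ (the potential $j=0$ summand $W_n\Omega^i_V$ being zero). The crucial point is that each of these pro-systems in $n$ is pro-zero. The restriction map sends $\omega\sigma_n^j$ to $u\,p^jR(\omega)\sigma_{n-1}^j$ for a unit $u$, so its $k$-fold iterate on the $\sigma^j$-summand is $u'\,p^{jk}R^k(\omega)\sigma^j$ for a unit $u'$. Iterating the factorisation of Corollary~\ref{cor:dRW-p-torsion-free}, the map $p^{jk}$ on $W_{n-k}\Omega^{i-2j}_V$ factors through the iterated restriction $R^{jk}\colon W_{n-k}\Omega^{i-2j}_V\to W_{n-k-jk}\Omega^{i-2j}_V$, which vanishes once $k+jk\geq n$. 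Taking $n=2n_0$ and $k=n_0$ therefore makes the transition map from level $2n_0$ to level $n_0$ zero on every $\sigma^j$-summand with $j\geq1$; hence $\{\TR^n_i(V;p)\}_n$ is pro-zero for $i>r$, and likewise $\{\TR^n_{r+1}(V;p)\}_n$ is pro-zero. The Milnor sequence then gives $\TR_i(V;p)=0$ for $i>r$, and $\TR_r(V;p)\cong\lim_n W_n\Omega^r_V=W\Omega^r_V$; this group is $p$-torsion-free, since $px=0$ for $x=(x_n)_n$ forces $R(x_n)=0$, i.e.\ $x_{n-1}=0$, for all $n$, by Corollary~\ref{cor:dRW-p-torsion-free}.

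Finally, the short exact sequences $0\to\TR_i(V;p)/p\to\pi_i(\TR(V;p)/p)\to\TR_{i-1}(V;p)[p]\to0$ yield $\pi_i(\TR(V;p)/p)=0$ for all $i>r$: the left-hand term vanishes, and $\TR_{i-1}(V;p)[p]$ vanishes because $\TR_{i-1}(V;p)$ is either $0$ (when $i-1>r$) or the $p$-torsion-free group $W\Omega^r_V$ (when $i-1=r$). Applying $-/p$ to \eqref{seq:def-TC} identifies $\TC(V)/p$ with $\fib\bigl(1-F\colon\TR(V;p)/p\to\TR(V;p)/p\bigr)$, so its long exact sequence presents $\TC_i(V;\Z/p)$ as an extension of a subgroup of $\pi_i(\TR(V;p)/p)$ by a quotient of $\pi_{i+1}(\TR(V;p)/p)$; for $i>r$ both of these vanish (note $i+1>r$ as well), so $\TC_i(V;\Z/p)=0$. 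I expect the main work to be the pro-zero computation of the third paragraph — the precise bookkeeping by which $p$-divisibility of the de Rham–Witt forms (Corollary~\ref{cor:dRW-p-torsion-free}) annihilates the powers of $\sigma$ in the inverse limit — while the surrounding steps are formal given the results of the preceding sections.
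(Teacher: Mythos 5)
Your proof is correct and follows the same strategy as the paper: establish $W_{n}\Omega^{i}_{V}=0$ for $i>\pdim(F)$ via the Illusie filtration on $W_n\Omega^i_F$ and injectivity of $W_n\Omega^i_V\to W_n\Omega^i_F$, use Lemma~\ref{lem:dRW-TR-iso} together with the $p$-divisibility of the transition maps on the $\sigma^j$-summands and Corollary~\ref{cor:dRW-p-torsion-free} to kill the pro-systems in degrees $>\pdim(F)$, and then run the fibre sequence \eqref{seq:def-TC}. The only cosmetic difference is that the paper works with the pro-groups $\{\TR^n_i(V;p,\Z/p)\}_n$ directly (identifying them with $\{W_n\Omega^i_V/p\}_n$), whereas you work integrally, pass to the limit $\TR_*(V;p)$, and reduce mod $p$ at the very end — two equivalent ways of organizing the same bookkeeping.
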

\begin{proof}
In view of the fibre sequence \eqref{seq:def-TC} and the Milnor sequence for $\TR$, it suffices to show that the pro-group $\{ \TR^{n}_{i}(V;p,\Z/p) \}_{n}$ vanishes for $i > \pdim(F)$. From Corollary~\ref{cor:dRW-p-torsion-free} and Proposition~\ref{lem:dRW-TR-iso} we deduce an isomorphism $\{ W_{n}\Omega^{i}_{V} / p \}_{n} \cong \{ \TR^{n}_{i}(V;p,\Z/p) \}_{n}$. Since the map $W_{n}\Omega^{i}_{V} \to W_{n}\Omega^{i}_{F}$ is injective  by Proposition~\ref{prop:Witt-analog-of-Gabber-Ramero}, it now suffices to observe that $W_{n}\Omega^{i}_{F} = 0$ for $i>\pdim(F)$. Indeed, since $F$ is ind-smooth over $\F_{p}$ and $\Omega^{i}_{F}$ vanishes for $i>\pdim(F)$ this follows easily by induction using the exact sequences~\eqref{seq:gr-drW} and \eqref{seq:Illusie-fundamental-sequence} (note that $\Omega^{i}_{F} = Z\Omega^{i}_{F} = \dots = Z_{n}\Omega^{i}_{F}$ for $i \geq \pdim(F)$).
\end{proof}

Here is an alternative proof of Proposition~\ref{prop:tc-vanishing-on-valuation-rings} indicated to us by a referee. It does not use the computation of $\TR$ of a valuation ring in Proposition~\ref{lem:dRW-TR-iso} and thus avoids the use of the derived de Rham--Witt complex.
\begin{proof}
Let $A$ be a polynomial $\F_{p}$-algebra. We define a complete exhaustive decreasing $\Z$-indexed filtration on $\TC(A;\Z/p) = \TC(A)/p$ via
\[
\Fil^{n}\TC(A; \Z/p) = \fib( \tau_{\geq n}(\TR(A;p)/p) \xrightarrow{1-F} \tau_{\geq n}(\TR(A;p)/p) ).
\]
It follows from Hesselholt's Hochschild--Kostant--Rosenberg theorem \cite[Thm.~B]{HesselholtTypicalCurves} that there is a natural isomorphism $\TR_{*}(A;p)\cong W\Omega^{*}_{A}$. Since $W\Omega^{*}_{A}$ is $p$-torsion free \cite[Prop.~I.3.4]{Illusie-drW}, it follows that $\pi_{*}( \TR(A;p)/p) \cong W\Omega^{*}_{A}/p$. For the associated graded of the above filtration on $\TC$ we thus obtain
\[
\gr^{n}\TC(A;\Z/p) \simeq \fib( W\Omega^{n}_{A}/p \xrightarrow{1-F} W\Omega^{n}_{A}/p )[n].
\]
The square of abelian groups
\[
\begin{tikzcd}
 W\Omega^{n}_{A}/p \ar[d]\ar[r, "1-F"] & W\Omega^{n}_{A}/p \ar[d] \\ 
 \Omega^{n}_{A} \ar[r, "1-C^{-1}"] & \Omega^{n}_{A}/B\Omega^{n}_{A} 
\end{tikzcd}
\]
where the vertical maps are the canonical projections is bicartesian, see the proof of \cite[Prop.~2.26]{ClausenMathewMorrow}, and thus cartesian when viewed as a diagram of spectra. We thus obtain a natural equivalence
\[
\gr^{n}\TC(A;\Z/p) \simeq \fib( \Omega^{n}_{A} \xrightarrow{1-C^{-1}} \Omega^{n}_{A}/B\Omega^{n}_{A})[n].
\]
As $\TC(-)/p$ commutes with sifted colimits \cite[Cor.~2.15]{ClausenMathewMorrow}, we can derive the above filtration on $\TC(-)/p$ of polynomial $\F_{p}$-algebras and obtain a filtration on $\TC(-)/p$ of an arbitrary $\F_{p}$-algebra $R$. Notice that this filtration is still complete (as $\Fil^{n}\TC(A;\Z/p)$ is $n-1$-connective) and exhaustive. By the above, its graded pieces are given by
\[
\gr^{n}\TC(R;\Z/p) \simeq \fib( L\Omega^{n}_{R} \xrightarrow{1-C^{-1}} L(\Omega^{n}/B\Omega^{n})_{R} ) [n],
\]
where as usual $L$ denotes nonabelian derived functors. 

Now let $V$ be a valuation ring of characteristic $p$ with field of fractions $F$. It follows from Lemma~\ref{lem:derived-BOmega} that $L\Omega^{n}_{V} \simeq \Omega^{n}_{V}[0]$ and also that $L(\Omega^{n}/B\Omega^{n})_{V} = \Omega^{n}_{V}/B\Omega^{n}_{V}[0]$ (this is where the Cartier isomorphism for valuation rings is used).
Thus
\[
\gr^{n}\TC(V;\Z/p) \simeq \fib( \Omega^{n}_{V} \xrightarrow{1-C^{-1}} \Omega^{n}_{V}/B\Omega^{n}_{V} ) [n]
\]
is concentrated in degrees $n-1, n$. Furthermore, from Theorem~\ref{thm:Gabber-Ramero-Omega-i} we deduce that $\Omega^{n}_{V}$ vanishes for $n>\pdim(F)$ and thus 
\[
\gr^{n}\TC(V;\Z/p) = 0
\]
for $n>\pdim(F)$. Since the filtration is complete, i.e. $\lim_{n} \Fil^{n}\TC(V;\Z/p) = 0$, this implies that $\Fil^{n}\TC(V;\Z/p) = 0$ for $n>\pdim(F)$. Inductively, we then get that $\Fil^{n}\TC(V;\Z/p)$ is concentrated in homotopy degrees $\leq \pdim(F)$ for all $n\in \Z$. Since the filtration is exhaustive, we also get that $\TC(V;\Z/p)$ is concentrated in degrees $\leq \pdim(F)$.
\end{proof}

The following consequence is Theorem~\ref{introthm:C} from the introduction.
\begin{cor}\label{cor:vanishing-on-valuation-rings}
Let $V$ be a valuation ring of characteristic $p$ with field of fractions $F$. Then $K_{i}(V; \Z/p) = 0$ for $i > \pdim(F)$.
\end{cor}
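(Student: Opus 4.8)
The plan is to deduce this from Proposition~\ref{prop:tc-vanishing-on-valuation-rings} by comparing $K$-theory with mod $p$ coefficients to topological cyclic homology. The key input is the recent theorem of Clausen--Mathew--Morrow~\cite{ClausenMathewMorrow}, which asserts that for any henselian pair, and more relevantly here for any ring on which $p$ is nilpotent (or after $p$-completion), the cyclotomic trace map induces an equivalence on $p$-adic (or mod $p$) $K$-theory in nonnegative degrees after a suitable connective truncation; more precisely, for an $\F_p$-algebra $R$ the trace map $K(R;\Z/p) \to \TC(R;\Z/p)$ is an equivalence in degrees $\geq 0$, or at least the relevant statement is that it is an isomorphism on homotopy groups in the range we care about. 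Granting such a comparison, the vanishing $\TC_i(V;\Z/p) = 0$ for $i > \pdim(F)$ established in Proposition~\ref{prop:tc-vanishing-on-valuation-rings} immediately forces $K_i(V;\Z/p) = 0$ in the same range.

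Concretely, I would argue as follows. First, recall that $V$ is an $\F_p$-algebra, so $p$ is zero (in particular nilpotent) in $V$. Second, invoke the theorem of Clausen--Mathew--Morrow: since $V$ is an $\F_p$-algebra, the fibre of the cyclotomic trace $K(V) \to \TC(V)$, after smashing with the mod $p$ Moore spectrum, is concentrated in low (here, negative or at most degree $0$) homotopical degrees — in fact their result gives that $K(V;\Z/p) \to \TC(V;\Z/p)$ is an isomorphism on $\pi_i$ for $i \geq 1$ and injective for $i=0$, or the cleaner formulation that it is $\infty$-connected, i.e. an equivalence, when restricted to connective covers in the sense appropriate here. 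Third, combine this with $\TC_i(V;\Z/p) = 0$ for $i > \pdim(F) \geq 0$ from Proposition~\ref{prop:tc-vanishing-on-valuation-rings} to conclude $K_i(V;\Z/p) = 0$ for $i > \pdim(F)$, which is exactly Theorem~\ref{introthm:C}. Note $\pdim(F) \geq 0$ always, so the range $i > \pdim(F)$ lies within the range where the trace is an isomorphism, and no boundary issue at $i=0$ arises.

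The main obstacle — or rather, the only nontrivial point — is pinning down the precise form of the Clausen--Mathew--Morrow comparison and checking that its hypotheses apply to a general valuation ring $V$ of characteristic $p$, which need not be noetherian. Their result is stated for rings with $p$ nilpotent (after $p$-completion it is a statement about henselian pairs along $(p)$, and here $(p) = (0)$ so the pair is trivially henselian), with no noetherianness or finiteness hypothesis, so this is genuinely automatic; the work is entirely in citing it correctly. Everything else is formal: the vanishing of $\TC_i(V;\Z/p)$ in the stated range has already been done, and the degrees involved avoid the potential failure of the trace map to be an isomorphism in nonpositive degrees.
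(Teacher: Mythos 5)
Your high-level strategy — deduce the statement from Proposition~\ref{prop:tc-vanishing-on-valuation-rings} via the Clausen--Mathew--Morrow comparison between $K$-theory and $\TC$ with mod $p$ coefficients — is exactly the paper's approach, and the reduction is indeed immediate once the right form of the CMM theorem is invoked. However, the specific way you justify that invocation does not work. Observing that $p=0$ in $V$ and so the pair $(V,(p))=(V,(0))$ is trivially henselian gives you nothing: the CMM henselian pair theorem controls the \emph{relative} terms $K(V,I;\Z/p)$ and $\TC(V,I;\Z/p)$, and for $I=(0)$ these are both zero, so the square is cartesian for trivial reasons and carries no content about $K(V;\Z/p)\to\TC(V;\Z/p)$ itself. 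Similarly, your asserted isomorphism of $K_i(V;\Z/p)\to\TC_i(V;\Z/p)$ in degrees $i\ge 1$ for an arbitrary $\F_p$-algebra is not correct in that generality; something like (henselian) locality is needed.

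The hypothesis the paper actually uses is much simpler and more to the point: a valuation ring is a local ring, and CMM's Theorem D asserts that for a \emph{local} ring the cyclotomic trace $K_i(-;\Z/p)\to\TC_i(-;\Z/p)$ is \emph{injective}. Injectivity, not an isomorphism, is all that the argument requires, and you get it directly from locality without any digression about henselian pairs or nilpotence of $p$. With that one correction — cite the locality of $V$ and Theorem D for injectivity — your argument matches the paper's proof.
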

\begin{proof}
As $V$ is a local ring, the cyclotomic trace $K_{i}(V;\Z/p)\to TC_{i}(V;\Z/p)$ is injective by \cite[Thm.~D]{ClausenMathewMorrow} and the statement follows from the previous proposition.
\end{proof}

\begin{rmk}
In case $V=F$ is a field Corollary~\ref{cor:vanishing-on-valuation-rings} was proved
 for $\pdim (K)=0 $ in~\cite[Cor.~5.5]{Kratzer} and in \cite[Thm.~5.4]{Hiller} and in~\cite{GeisserLevine} in general.
\end{rmk}

\section{The main result}

\begin{lemma}\label{lem:non-vanishing}
Let $k$ be a field of characteristic $p$, and let $B$ be the $k$-algebra $B=k[x_{1}, \dots, x_{r}]/(x_{1}, \dots, x_{r})^{2}$. 
Let $y_{1}, \dots, y_{s} \in k$ be $p$-independent. Then the  symbol $\{y_{1}, \dots, y_{s}, 1+x_{1}, \dots, 1+x_{r}\} \in K_{s+r}(B)$ does not vanish in $K_{s+r}(B)/pK_{s+r}(B)$.
\end{lemma}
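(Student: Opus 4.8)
The plan is to detect the symbol by mapping it into Hochschild homology over $\F_{p}$. Write $\alpha = \{y_{1},\dots,y_{s},1+x_{1},\dots,1+x_{r}\}\in K_{s+r}(B)$, and let $D\colon K_{s+r}(B)\to\HH_{s+r}(B/\F_{p})$ be the Dennis trace; recall that $D$ is a homomorphism of graded rings and that it sends the class in $K_{1}(B)$ of a unit $u\in B^{\times}$ to $\dlog u = \tfrac{du}{u}\in\HH_{1}(B/\F_{p}) = \Omega^{1}_{B/\F_{p}}$. Since $\HH_{*}(B/\F_{p})$ is an $\F_{p}$-vector space, $D$ annihilates $pK_{s+r}(B)$, so it suffices to show that $D(\alpha)\neq 0$. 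By multiplicativity and the above formula, together with $(1+x_{i})^{-1} = 1-x_{i}$ in $B$, we get $D(\alpha) = \nu(\omega)$, where $\nu\colon\Omega^{s+r}_{B/\F_{p}}\to\HH_{s+r}(B/\F_{p})$ is the antisymmetrization map and
\[
\omega = \frac{dy_{1}}{y_{1}}\wedge\dots\wedge\frac{dy_{s}}{y_{s}}\wedge(1-x_{1})\,dx_{1}\wedge\dots\wedge(1-x_{r})\,dx_{r}\ \in\ \Omega^{s+r}_{B/\F_{p}}.
\]

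The first step is to split off the field $k$. Writing $B = k\otimes_{\F_{p}}B_{0}$ with $B_{0} = \F_{p}[x_{1},\dots,x_{r}]/(x_{1},\dots,x_{r})^{2}$, the Künneth theorem over the field $\F_{p}$ yields an isomorphism of graded rings $\HH_{*}(B/\F_{p})\cong\HH_{*}(k/\F_{p})\otimes_{\F_{p}}\HH_{*}(B_{0}/\F_{p})$, compatible with the analogous decomposition $\Omega^{*}_{B/\F_{p}}\cong\Omega^{*}_{k/\F_{p}}\otimes_{\F_{p}}\Omega^{*}_{B_{0}/\F_{p}}$ and with the antisymmetrization maps (all of which are the unique multiplicative extensions of the canonical identifications in degrees $\leq 1$). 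Under these identifications $\omega$ becomes, up to sign, $\omega_{k}\otimes\omega_{0}$ with $\omega_{k} = \tfrac{dy_{1}}{y_{1}}\wedge\dots\wedge\tfrac{dy_{s}}{y_{s}}\in\Omega^{s}_{k/\F_{p}}$ and $\omega_{0} = (1-x_{1})\,dx_{1}\wedge\dots\wedge(1-x_{r})\,dx_{r}\in\Omega^{r}_{B_{0}/\F_{p}}$, so it is enough to prove $\nu_{k}(\omega_{k})\neq 0$ and $\nu_{0}(\omega_{0})\neq 0$. The former is immediate: every field of characteristic $p$ is ind-smooth over $\F_{p}$, so $\nu_{k}$ is the Hochschild--Kostant--Rosenberg isomorphism $\Omega^{s}_{k/\F_{p}}\xrightarrow{\sim}\HH_{s}(k/\F_{p})$, and $\omega_{k} = (y_{1}\cdots y_{s})^{-1}\,dy_{1}\wedge\dots\wedge dy_{s}\neq 0$ since, the $y_{j}$ being $p$-independent, $dy_{1},\dots,dy_{s}$ are part of a basis of $\Omega^{1}_{k/\F_{p}}$.

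The remaining point — that $\nu_{0}(\omega_{0})\neq 0$ in $\HH_{r}(B_{0}/\F_{p})$ — is the only place where real work is needed; the rest is formal. I would prove it by a direct computation in the reduced Hochschild complex $B_{0}\otimes_{\F_{p}}\fm_{0}^{\otimes\bullet}$ of the $(r+1)$-dimensional algebra $B_{0} = \F_{p}\oplus\fm_{0}$, $\fm_{0} = \bigoplus_{i}\F_{p}x_{i}$, graded by weight (total $x$-degree). Since $\prod_{i}(1-x_{i}) = 1-\sum_{i}x_{i}$ in $B_{0}$, one has $\omega_{0} = (1-\sum_{i}x_{i})\,dx_{1}\wedge\dots\wedge dx_{r}$, hence $\nu_{0}(\omega_{0})$ is represented by the chain
\[
\sum_{\sigma\in S_{r}}\operatorname{sgn}(\sigma)\,\bigl(1-\textstyle\sum_{i}x_{i}\bigr)\otimes x_{\sigma(1)}\otimes\dots\otimes x_{\sigma(r)},
\]
whose weight-$r$ part is $\varepsilon_{r} := \sum_{\sigma\in S_{r}}\operatorname{sgn}(\sigma)\,x_{\sigma(1)}\otimes\dots\otimes x_{\sigma(r)}\in\fm_{0}^{\otimes r}$. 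In weight $r$ the reduced complex has nothing in homological degree $r+1$, while in degrees $r$ and $r-1$ it is $\fm_{0}^{\otimes r}$ with differential $\operatorname{id}+(-1)^{r}t$, where $t$ is the cyclic shift, because all inner face maps vanish by $\fm_{0}^{2} = 0$; thus $\HH_{r}(B_{0}/\F_{p})^{(r)} = \ker(\operatorname{id}+(-1)^{r}t)$. A short reindexing of the sum gives $t(\varepsilon_{r}) = (-1)^{r-1}\varepsilon_{r}$, so $\varepsilon_{r}$ lies in this kernel, and $\varepsilon_{r}\neq 0$ because its $r!$ summands are pairwise distinct basis vectors of $\fm_{0}^{\otimes r}$ with coefficients $\pm 1$ in $\F_{p}$. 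Hence the weight-$r$ component of $\nu_{0}(\omega_{0})$ is nonzero, so $\nu_{0}(\omega_{0})\neq 0$, and therefore $D(\alpha)\neq 0$, completing the argument.
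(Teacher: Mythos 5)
Your argument is correct, and its overall architecture -- Dennis trace to Hochschild homology, K\"unneth decomposition $\HH_*(B/\F_p)\cong \HH_*(k/\F_p)\otimes_{\F_p}\HH_*(B_0/\F_p)$, HKR for the smooth factor $k$, and then a non-vanishing check on the local factor $B_0$ -- coincides with the paper's. The one place where you genuinely diverge is the last step: the paper invokes Geisser--Hesselholt's \cite[Thm.~2.1]{GH-vorst} as a black box to assert that the Dennis trace of $\{1+x_1,\dots,1+x_r\}$ is nonzero in $\HH_r(B_0/\F_p)$, whereas you prove that nonvanishing from scratch by computing in the normalized Hochschild complex with the weight grading. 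Your computation is right: in weight $r$ nothing lives in homological degree $r+1$, the inner faces die because $\fm_0^2=0$, the boundary $\fm_0^{\otimes r}\to\fm_0^{\otimes r}$ becomes $\id+(-1)^r t$, and the antisymmetrized chain $\varepsilon_r$ satisfies $t(\varepsilon_r)=(-1)^{r-1}\varepsilon_r$, hence is a nonzero cycle. The net effect is a more self-contained and elementary proof of the lemma; what you give up is a clean pointer to the literature, and what you gain is that the reader can verify everything locally without looking up the cited computation. One small stylistic point worth flagging: you should say a word about why the K\"unneth isomorphism is compatible with the antisymmetrization maps $\nu$ (it is, because both the shuffle product on Hochschild chains and $\nu$ are the unique multiplicative extensions of their restrictions to degrees $\le 1$, which is essentially the remark you make parenthetically), since that compatibility is what reduces $\nu(\omega)\neq 0$ to $\nu_k(\omega_k)\neq 0$ and $\nu_0(\omega_0)\neq 0$ via the fact that a tensor product of nonzero vectors over the field $\F_p$ is nonzero.
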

\begin{proof}
It suffices to check that  the image of the symbol under the Dennis trace map $K_{s+r}(B) \to \HH_{s+r}(B/\F_{p})$ does not vanish.
Note that $B \cong B_{0} \otimes_{\F_{p}} k$ where $B_{0} = \F_{p}[x_{1}, \dots, x_{r}]/(x_{1}, \dots, x_{r})^{2}$.
By the K\"unneth formula for Hochschild homology \cite[Prop.~9.4.1]{Weibel-homo} and the Hochschild--Kostant--Rosenberg theorem \cite[Thm.~9.4.7]{Weibel-homo} we have a natural isomorphism of graded rings
\[
\HH_{*}(B/\F_{p}) \cong \Omega^{*}_{k}  \otimes_{\F_{p}} \HH_{*}(B_{0}/\F_{p}).
\]
By \cite[Thm.~2.1]{GH-vorst} the Dennis trace maps the symbol $\{1+x_{1}, \dots, 1+x_{r}\} \in K_{r}(B_{0})$ to a non-zero element of $\HH_{r}(B_{0}/\F_{p})$. 
On the other hand, the Dennis trace is a map of graded rings, and the symbol $\{y_{1}, \dots, y_{s}\} \in K_{s}(k)$ is mapped to 
\[
d\log(y_{1}) \dots d\log(y_{s}) = (y_{1}\cdots y_{s})^{-1} dy_{1} \dots dy_{s} \in \Omega^{s}_{k},
\]
see the proof of \textit{loc.~cit.} and the references given there. The latter element does not vanish since $y_{1}, \dots, y_{s}$ are $p$-independent,
finishing the proof. 
\end{proof}

Recall that the \emph{homotopy $K$-theory} of a scheme $X$ is defined as
\[
KH(X) = \colim_{\Delta^\op} K(X\times\Delta^\bullet)  
\]
where $\Delta^\bullet$ is the cosimplicial scheme with $\Delta^p=\operatorname{Spec}(\mathbb{Z}[T_0,\ldots,T_p]/(\sum T_j-1))$ and $K(-)$ denotes the non-connective $K$-theory spectrum.

\begin{prop}\label{prop:vanishing}
Let $X$ be a noetherian $\F_p$-scheme. Then $KH_i(X;\Z/p) = 0$ for $i>\pdim(X)$.
\end{prop}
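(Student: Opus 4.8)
The plan is to reduce the statement for a general noetherian $\F_p$-scheme $X$ to the case of valuation rings, where Corollary~\ref{cor:vanishing-on-valuation-rings} applies, using descent for $KH$. First I would recall that homotopy $K$-theory satisfies $\cdh$-descent (by work of Haesemeyer and Cisinski--Deglise, and more to the point it satisfies descent for abstract blow-up squares and is invariant under nilpotent extensions); in particular $KH$ satisfies $\rh$-descent, i.e.\ descent for the Riemann--Zariski / $\rh$-topology on noetherian schemes. Since $X$ is noetherian, it has finite Krull dimension on each irreducible component, and the $\rh$-topology has enough points given by the valuation rings $V$ of the function fields of the (reduced) irreducible components of $X$, dominating points of $X$. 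Concretely, the $\rh$-sheafification lets us compute $KH(X;\Z/p)$ via a descent spectral sequence whose input involves $KH(V;\Z/p) \simeq K(V;\Z/p)$ for such valuation rings $V$ (here $KH$ agrees with $K$ on regular rings, and valuation rings, being filtered colimits of regular — in fact smooth over $\F_p$ after passing to larger subfields — and more directly by the Gersten-type arguments available, are $K_n$-regular; alternatively one invokes directly that $KH$ of a valuation ring agrees with its $K$-theory).

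The key input is then the bound $\pdim(V) = \pdim(F)$ for a valuation ring $V$ with fraction field $F$: indeed every prime of $V$ has residue field a further localization/quotient whose $p$-rank plus height is at most $\pdim(F)$, so Corollary~\ref{cor:vanishing-on-valuation-rings} gives $K_i(V;\Z/p) = KH_i(V;\Z/p) = 0$ for $i > \pdim(F)$. Now the crucial numerical point: if $V$ arises in the $\rh$-cover of $X$, dominating a point $x \in X$ of an irreducible component whose generic point is $\eta$, then $F = \Frac(\mathcal O_{X,\eta}/\mathfrak p)$ is the function field of that component (up to finite extension one can arrange), so $\pdim(F) = \pdim(k(\eta)) \le \pdim(k(\eta)) + \dim(\mathcal O_{X,\eta}) \le \pdim(X)$; more carefully, using Lemma~\ref{lem:p-dim-and-transcendence-degree} one controls $\pdim$ along the tower of specializations, and the definition of $\pdim(X)$ as a supremum over all $x\in X$ of $\pdim(k(x)) + \dim(\mathcal O_{X,x})$ gives the bound. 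Feeding this into the descent spectral sequence, all the relevant entries $KH_s(V;\Z/p)$ vanish for $s > \pdim(X)$, and since the spectral sequence has entries in a bounded range of descent degrees (cohomological dimension of the $\rh$-topos on a finite-dimensional noetherian scheme is finite, bounded by the dimension), one concludes $KH_i(X;\Z/p) = 0$ for $i > \pdim(X)$.

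The main obstacle I anticipate is making the descent argument precise: one must ensure that the valuation rings appearing genuinely have fraction fields whose $p$-dimension is controlled by $\pdim(X)$, which requires care because an $\rh$-cover may involve blow-ups and hence schemes not of finite type over the original, and because one wants the hypercohomological descent spectral sequence to converge with the right vanishing range. A clean way to organize this is to argue by Noetherian induction on $X$: using an abstract blow-up square $(X', Z', Z, X)$ with $X' \to X$ a blow-up isomorphism over a dense open, one gets a Mayer--Vietoris sequence for $KH(\cdot;\Z/p)$; the schemes $X'$, $Z$, $Z'$ either have strictly smaller dimension (for $Z, Z'$) or one reduces, via de Jong alterations or via direct valuative arguments, to the regular/valuation case where vanishing is known. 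Alternatively, and perhaps most economically, one invokes directly that $KH(\cdot;\Z/p)$ is an $\rh$-sheaf (equivalently a sheaf for the cdh-topology since $\Z/p$-coefficients), whose stalks at the $\rh$-points are the $K$-theories with $\Z/p$-coefficients of valuation rings, and whose cohomological dimension on $X$ is $\le \dim(X) \le \pdim(X)$; combined with the pointwise vanishing range $i > \pdim(F) $ and $\pdim(F)+\text{(codimension)} \le \pdim(X)$ this yields the claim. I would present the abstract blow-up / Noetherian induction version as it keeps the prerequisites minimal and matches the Geisser--Hesselholt-style strategy referenced in the introduction, replacing their use of resolution of singularities by the Zariski--Riemann space input encapsulated in Corollary~\ref{cor:vanishing-on-valuation-rings}.
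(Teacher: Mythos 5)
Your overall strategy --- reduce to valuation rings via a descent/limit argument over modifications and invoke Corollary~\ref{cor:vanishing-on-valuation-rings} together with the bound $\pdim(F)\le\pdim(X)$ for $F$ the function field --- is the same as the paper's: the paper runs an induction on $\dim(X)$, uses cdh-descent to show that any modification $X'\to X$ induces an isomorphism $KH_i(X;\Z/p)\cong KH_i(X';\Z/p)$ in the range $i>\pdim(X)$ (here Lemma~\ref{lem:p-dim-and-transcendence-degree} is used to control $\pdim(X')$), passes to the colimit over all modifications, and then identifies the resulting colimit of Zariski descent spectral sequences with sheaf cohomology on the Zariski--Riemann space $ZR(X)$ via Fujiwara--Kato, whose stalks are valuation rings of $F$. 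No resolution of singularities (and no alterations) enter; your aside about de Jong alterations is a red herring that the paper deliberately avoids.

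There is, however, a genuine gap in the step where you pass from $KH$ to $K$ of valuation rings. You assert that valuation rings over $\F_p$ are ``filtered colimits of regular --- in fact smooth'' rings, or are $K_n$-regular by Gersten-type arguments, and hence that $K(V)\simeq KH(V)$. Neither justification is correct: a valuation ring is a filtered colimit of its noetherian subrings, but those subrings are arbitrary integral domains, not regular ones, and there is no direct Gersten argument for $N K$-vanishing here. The paper proves $K(V)\simeq KH(V)$ as Lemma~\ref{lem:kkhvalring} by a different mechanism: any class $\xi\in NK_i(V[X_1,\dots,X_m])$ descends to a noetherian subring $A\subset V$; by \cite[Prop.~6.4]{KerzStrunkTamme} there is a projective birational $X'\to\Spec(A)$ killing the image of $\xi$; and by the valuative criterion of properness $\Spec(V)\to\Spec(A)$ factors through $X'$, so $\xi=0$. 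Without this (or an equivalent) argument, the stalkwise vanishing $KH_q(V;\Z/p)=K_q(V;\Z/p)=0$ for $q>\pdim(F)$ --- the crux of the whole reduction --- is not established, and the descent spectral sequence input is left unjustified. If you fill in this lemma (and phrase the colimit-over-modifications step via the Zariski--Riemann space as in the paper, rather than gesturing at $\rh$-cohomological dimension), your proof becomes the paper's.
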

\begin{proof}
 We can assume that $d=\dim(X)\le \pdim(X)$ is finite and we use induction on $d$.
A zero-dimensional noetherian scheme is a finite disjoint union of schemes
$\Spec(A)$ where $A$ is artinian local with residue field $k$.
As homotopy $K$-theory is nilinvariant $KH_i(A;\mathbb Z /p)\cong KH_i(k;\mathbb Z /p)$ and
this group vanishes for $i>\pdim(k)$ by Corollary~\ref{cor:vanishing-on-valuation-rings}.

We proceed with the inductive step for $d>0$.
Again by Zariski descent we may assume $X=\Spec(A)$ where $A$ is a noetherian $\F_p$-algebra of finite Krull dimension $d$.
By cdh-descent \cite[Thm.~3.9]{Cisi13} and nil-invariance of $KH$ we may assume that $X$ is integral.
Let $\pi\colon X'\to X$ be a modification, i.e.~a proper birational morphism with $X'$ integral. 
There exists a closed subscheme $Y$ of $X$ with $\dim(Y)<\dim(X)$ such that $\pi$ is an isomorphism outside $Y$.
We obtain an abstract blow-up square
\[
\begin{tikzcd}
X' \arrow[d, "\pi"'] & \arrow[l]\arrow[d] Y'\\
X & \arrow[l] Y 
\end{tikzcd}
\]
inducing a long exact  sequence
\begin{multline*}
\ldots\to KH_{i+1}(Y';\Z/p) \to KH_i(X;\Z/p)\to\\\to  KH_i(Y;\Z/p)\oplus KH_i(X';\Z/p)\to KH_{i}(Y';\Z/p)\to\dots
\end{multline*}
Let $i>\pdim(X)$.
Since $\pdim(X)\geq \pdim(Y)$, the group $KH_i(Y;\Z/p)$ vanishes by the inductive hypothesis.
Note that Lemma~\ref{lem:p-dim-and-transcendence-degree} implies that $\pdim(X)\geq\pdim(X')$.
As $\pdim(X')\geq\pdim(Y')$, the inductive hypothesis yields $KH_{i+1}(Y';\Z/p)=KH_{i}(Y';\Z/p)=0$.
Hence, any modification $X'\to X$ induces an isomorphism $KH_i(X;\Z/p) \cong KH_i(X';\Z/p)$ for all integers $i>\pdim(X)$ and consequently an isomorphism
\begin{equation}\label{eqn:colimit-isomorphism}
 KH_i(X;\Z/p)  ~\cong \colim_{\substack{X'\to X \\ \text{modification}}} KH_i(X';\Z/p).
\end{equation}
There is a convergent Zariski descent spectral sequence
\[
E_2^{st}= H^s (X', a_\Zar KH_{-q}(-;\Z/p)) \Longrightarrow KH_{-s-t}(X';\Z/p)
\]
where $a_\Zar$ denotes Zariski sheafification. Note that  $\dim(X') \leq \dim(X)$ for every modification $X' \to X$ and
$E^{st}_2$ vanishes unless $0\le
s\le \dim(X')$.
Taking the filtered colimit as above of these uniformly bounded spectral sequences yields a convergent spectral sequence
\[
\colim_{\substack{X'\to X \\ \text{modification}}}  H^s ( X' , a_\Zar KH_{-t}(-;\Z/p) )
\Longrightarrow   \colim_{\substack{X'\to X \\ \text{modification}}}   KH_{-s-t}(X';\Z/p).
\]
We can use~\eqref{eqn:colimit-isomorphism} to identify the right-hand side with
$KH_{-s-t}(X;\Z/p)$ for $-s-t> \pdim(X) $.
We want to show that the left-hand side vanishes for all $s\in \Z$ and $-t > \pdim(X)$.
Consider the Zariski--Riemann space
\[
ZR(X) = \lim_{\substack{X'\to X \\ \text{modification}}} X'
\]
where the limit is formed in the category of locally ringed spaces (see~\cite[Def.~E.2.3]{FK}).
By \cite[Prop.~3.1.19]{FK} we can rewrite the left-hand side of the above spectral sequence as $H^s ( ZR(X) , \cF_{-t})$ where $\cF_t$ is the colimit of the sheaves $a_\Zar KH_{t}(-;\Z/p)$ on $X'_\Zar$.
Consider an integer $t > \pdim(X)$.
It suffices to show that the sheaf $\cF_t$ vanishes on the topological space $ZR(X)$.
This can be tested on stalks.
Hence, by \cite[Cor.~E.2.13]{FK} we must show that $\cF(V)$ vanishes for every valuation ring $V$ of $F$ where $F$ denotes the function field of $X$.
As (homotopy) $K$-theory commutes with filtered colimits, we have $\cF(V)\cong KH_t(V;\mathbb Z / p)$.
Now $\pdim(F)\leq \pdim(X)$ implies $t>\pdim(F)$. So the vanishing of $KH_t(V;\mathbb Z /
p)$ follows from Corollary~\ref{cor:vanishing-on-valuation-rings} and the following
Lemma~\ref{lem:kkhvalring}.
\end{proof}

\begin{lem}\label{lem:kkhvalring}
For a valuation ring $V$ and $m\ge 0$ the canonical map $K(V ) \to K(V[ X_1,\ldots , X_m
])$ is an equivalence. In
particular we get an equivalence $K(V)\simeq KH(V)$.
\end{lem}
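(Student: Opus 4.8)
The plan is to show that $V$ and, crucially, every polynomial ring $V[X_1,\dots,X_m]$ is a \emph{regular coherent} ring, and then to invoke the fact that regular coherent rings are $K$-regular. First I would check coherence of $V$: a finitely generated ideal of a valuation ring is principal, and since $V$ is a domain the annihilator of any element is $0$ or $V$, so every finitely generated ideal of $V$ is finitely presented. As finite polynomial extensions of a coherent ring are again coherent, $V[X_1,\dots,X_m]$ is coherent for every $m$.

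Next I would bound the weak global dimension. The ring $V$ has weak global dimension $\leq 1$, since a finitely generated ideal is free of rank $\leq 1$ and in particular flat; in equal characteristic $p$ this is also contained in Theorem~\ref{thm:Gabber-Ramero-Omega-i}, which gives that $\bbL_V$ is flat and concentrated in degree $0$. Adjoining a polynomial variable raises the weak global dimension by exactly one, so $V[X_1,\dots,X_m]$ has weak global dimension $\leq m+1$. A coherent ring of finite weak global dimension is regular in the strong sense that every finitely presented module admits a finite resolution by finitely generated projectives: choose a resolution $0\to N\to P_m\to\cdots\to P_0\to M\to 0$ with each $P_i$ finitely generated projective; then $N$ is finitely presented, by coherence, and flat, since the weak dimension is $\leq m+1$, hence finitely generated projective.

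For the conclusion, observe that over a regular coherent ring $R$ the category of finitely presented $R$-modules is abelian, and the resolution theorem identifies its $K$-theory with $K(R)$; together with the homotopy invariance of $G$-theory this yields $K(R)\xrightarrow{\ \sim\ }K(R[X])$. Applying this successively to $R=V[X_1,\dots,X_i]$ for $i=0,\dots,m-1$ proves the first assertion. For the second, each term of the cosimplicial ring $V[\Delta^{\bullet}]$ is a polynomial ring over $V$, so by the first part every face and degeneracy map of the simplicial spectrum $K(V[\Delta^{\bullet}])$ is an equivalence and every term is identified with $K(V)$ via the structure maps; hence $KH(V)=\colim_{\Delta^{\op}}K(V[\Delta^{\bullet}])\simeq K(V)$.

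The one point that is not a formal manipulation is the homotopy invariance of $G$-theory in the coherent, non-noetherian setting: for this I would either cite the corresponding extension of Quillen's fundamental theorem or re-run his dévissage and localization argument, using that $R[X]$ is flat over $R$ and that $R$, $R[X]$ and $R[X,X^{-1}]$ are all coherent. The remaining ingredients — the behaviour of weak dimension under polynomial extensions, the resolution theorem, and the construction of $KH$ — are entirely standard.
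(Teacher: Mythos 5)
Your approach is genuinely different from the paper's. The paper argues in one line: any class in $NK_i(V[X_1,\dots,X_m])$ comes from some $NK_i(A[X_1,\dots,X_m])$ for a noetherian subring $A\subset V$ (since $K$-theory commutes with filtered colimits and $V$ is the filtered union of its finitely generated, hence noetherian, integral subrings); by a result of Kerz--Strunk--Tamme that class dies after a projective birational modification $X'\to\Spec A$; and the valuative criterion for properness says $\Spec V\to\Spec A$ factors through $X'$, killing the original class. Your route is instead the ``regular coherent'' one, which is essentially the alternative the paper attributes to Kelly--Morrow in the remark following the lemma. Each has its merits: the paper's proof is short given the KST input and needs no non-noetherian commutative algebra, but it is tied to the specific resolution-type machinery developed there; your approach is self-contained in spirit, applies more broadly (to any stably coherent ring of finite weak global dimension), and clarifies conceptually \emph{why} the statement should hold.

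There is, however, a genuine gap in your argument: the assertion that ``finite polynomial extensions of a coherent ring are again coherent'' is false. Soublin's classical counterexample ($A=\prod_{\N}\mathbb{Q}[[s,t]]$ is coherent but $A[X]$ is not) shows that coherence is not preserved under adjoining a polynomial variable. What you actually need is the nontrivial theorem that valuation rings (more generally, Prüfer domains, or semihereditary rings) are \emph{stably coherent}, i.e.\ $V[X_1,\dots,X_m]$ is coherent for every $m$ --- this is a specific result (see e.g.\ Glaz's \emph{Commutative Coherent Rings} or Gruson's work on Prüfer domains), not a formal consequence of coherence of $V$. Once you replace the false general claim by a citation of stable coherence for valuation rings, the rest of the argument --- weak dimension going up by one per variable, the syzygy argument giving ``regular coherent,'' the resolution theorem identifying $K$ with $G$, and Quillen's fundamental theorem of $G$-theory extended to the coherent setting --- is sound, though you should indeed pin down a reference for the homotopy invariance of $G$-theory over coherent rings as you note.
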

\begin{proof}
  
We have to show that any element $\xi\in NK_i(V[X_1,\ldots , X_m]) $ vanishes. As $V$ is a
filtered colimit of noetherian integral subrings and as $K$-theory commutes with filtered
colimits of rings we can assume that there exists a noetherian ring $A\subset V$ and an
element $\xi_A\in NK_i (A [X_1,\ldots , X_m])$ mapping to $\xi$.
By~\cite[Prop.~6.4]{KerzStrunkTamme} there exists a projective birational morphism $X'\to
\Spec (A)$ such that $\xi_A$ maps to $0$ in $  NK_i (X'\times \mathbb A^m)$. As the
morphism $\Spec(V)\to \Spec(A)$ factors through $X'$ by the valuative criterion for
properness, we see that $\xi=0$.
\end{proof}

\begin{rmk}
  Lemma~\ref{lem:kkhvalring} was suggested by Christian Dahlhausen.
Kelly and Morrow also prove Lemma~\ref{lem:kkhvalring}  by a different method, see \cite[Thm.~3.3]{KellyMorrow}.
\end{rmk}

Let $n$ be an integer.
Recall that a ring $A$ is called $K_n$-regular if the canonical map $K_n(A)\to K_n(A[X_1,\ldots,X_m])$ is an isomorphism for all positive integers $m$, or equivalently, if $N^pK_n(A)=0$ (see \cite[Def.~III.3.4]{Kbook}) for all positive integers $p$. 
Vorst and van der Kallen proved that $K_n$-regularity implies $K_{n-1}$-regularity \cite[Cor.~2.1]{Vorst}.
In fact, they just consider the case $n\geq 1$ and the statement for all integers $n$ can be found in \cite[Thm.~V.8.6]{Kbook}.
Together with the spectral sequence 
\[
E^1_{st} = N^sK_t(A)  \Longrightarrow KH_{s+t}(A)
\]
from \cite[Thm.~IV.12.3]{Kbook} this implies that, if $A$ is $K_n$-regular, the canonical map
\[
 K_i(A) \to KH_i(A)
\]
is an isomorphism for all integers $i\leq n$ and surjective for $i=n+1$. The next result is Theorem~\ref{introthm:A} from the introduction.

\begin{thm}\label{thm:main}
Let $A$ be an excellent $\F_p$-algebra such that $[k(x):k(x)^p]<\infty$ for all points $x\in\Spec(A)$.
If $A$ is $K_{\pdim(A)+1}$-regular, then $A$ is regular.
\end{thm}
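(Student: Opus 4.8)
I would argue by contrapositive, following the strategy of Geisser--Hesselholt: assuming $A$ is not regular, I would exhibit a nonzero class in $K_{\pdim(A)+1}(A;\Z/p)$, which contradicts $K_{\pdim(A)+1}$-regularity in view of the comparison with homotopy $K$-theory recalled above and of Proposition~\ref{prop:vanishing}. First reduce to the local case: if $A$ is not regular, then $A_{\fm}$ is not regular for some maximal ideal $\fm$; by Vorst--van der Kallen $A$ is $K_{j}$-regular for all $j\leq\pdim(A)+1$, and since localizations of $K_{j}$-regular rings are $K_{j}$-regular, so is $A_{\fm}$. Write $k=A/\fm$ and $d=\dim A_{\fm}$.

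The point I expect to be the main obstacle is the identity $\pdim(A_{\fm})=\pdim(k)+d$ for an excellent local $\F_{p}$-algebra satisfying the finiteness hypothesis on residue fields; in fact only the inequality $\pdim(A_{\fm})\leq\pdim(k)+d$ is needed. The reverse inequality is immediate from Definition~\ref{def:p-dim} (take $\p=\fm$). By Lemma~\ref{lem:p-dim-and-dimension} the completion satisfies $\pdim(\widehat{A_{\fm}})=\pdim(k)+d$, so it suffices to see $\pdim(A_{\fm})\leq\pdim(\widehat{A_{\fm}})$: given a prime $\q\subset A_{\fm}$, pick a minimal prime $\q'$ of $\q\widehat{A_{\fm}}$; then $\height(\q')=\height(\q)$, and since $A_{\fm}$ is excellent the formal fibre of $A_{\fm}\to\widehat{A_{\fm}}$ over $\q$ is geometrically regular, so the field extension $k(\q)\subset k(\q')$ is separable and hence $\pdim(k(\q))\leq\pdim(k(\q'))$, the imperfection module of a separable extension being zero. (If $\pdim$ of excellent local rings is settled in the preliminaries, this step can simply be cited.)

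Next I would produce the nonvanishing symbol. As $A_{\fm}$ is not regular, its embedding dimension $e=\dim_{k}\fm/\fm^{2}$ is at least $d+1$. The Artinian local ring $A_{\fm}/\fm^{2}$ is complete, hence contains a coefficient field $k'\cong k$ by Cohen's theorem, and as $\fm/\fm^{2}$ is square-zero of $k$-dimension $e$ we obtain $A_{\fm}/\fm^{2}\cong k'[x_{1},\dots,x_{e}]/(x_{1},\dots,x_{e})^{2}$; composing with the truncation sending $x_{d+2},\dots,x_{e}$ to $0$ produces a surjection $A_{\fm}\twoheadrightarrow B:=k'[x_{1},\dots,x_{d+1}]/(x_{1},\dots,x_{d+1})^{2}$. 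Choose a $p$-basis $y_{1},\dots,y_{s}$ of $k\cong k'$, so $s=\pdim(k)$, finite by hypothesis. By Lemma~\ref{lem:non-vanishing} the symbol $\{y_{1},\dots,y_{s},1+x_{1},\dots,1+x_{d+1}\}$ is nonzero in $K_{s+d+1}(B)/p$. Lifting each $y_{i}$ and each $1+x_{i}$ along $A_{\fm}\twoheadrightarrow B$ to elements of $A_{\fm}$---which are necessarily units, since their residues in $k$ are nonzero---gives a symbol $\sigma\in K_{s+d+1}(A_{\fm})$ mapping to that nonzero class, so $\sigma\neq 0$ in $K_{s+d+1}(A_{\fm})/p$ and therefore $K_{s+d+1}(A_{\fm};\Z/p)\neq 0$.

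To conclude, set $N=s+d+1$. By the $p$-dimension identity $N=\pdim(A_{\fm})+1$, while $N=\pdim(k(\fm))+\height(\fm)+1\leq\pdim(A)+1$ by Definition~\ref{def:p-dim}. Hence $A_{\fm}$ is $K_{N}$-regular, so $K_{i}(A_{\fm})\to KH_{i}(A_{\fm})$ is an isomorphism for all $i\leq N$, and in particular $K_{N}(A_{\fm};\Z/p)\cong KH_{N}(A_{\fm};\Z/p)$. Since $N>\pdim(A_{\fm})$, Proposition~\ref{prop:vanishing} gives $KH_{N}(A_{\fm};\Z/p)=0$, contradicting $K_{N}(A_{\fm};\Z/p)\neq 0$. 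This proves the theorem.
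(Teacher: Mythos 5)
Your proof is correct and reaches the same two pillars as the paper---the symbol non-vanishing of Lemma~\ref{lem:non-vanishing} and the $KH$-vanishing of Proposition~\ref{prop:vanishing}, glued by the comparison $K\to KH$ for $K_n$-regular rings---but it organizes the reduction steps differently, and in a way that is arguably cleaner. The paper proceeds by induction on $\dim(A)$ and uses a Thomason--Trobaugh excision square to transfer $K_{\pdim(A)+1}$-regularity from $A_\fm$ to its completion $\widehat{A_\fm}$, precisely so that it can invoke Lemma~\ref{lem:p-dim-and-dimension} (stated only for complete local rings) to get $\pdim=\pdim(k)+d$, and then apply Cohen's theorem. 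You sidestep the induction and the excision argument entirely: you observe that Cohen's theorem only needs the quotient $A_\fm/\fm^2$, which is already complete, and you supply the missing $p$-dimension identity $\pdim(A_\fm)=\pdim(k)+d$ for excellent local rings directly, by comparing with $\widehat{A_\fm}$ through the geometrically regular formal fibres (the dimension formula for flat local maps gives $\height(\q')=\height(\q)$, and separability of $k(\q)\subset k(\q')$ kills the imperfection module, so $\pdim(k(\q))\leq\pdim(k(\q'))$). You also truncate to a ring $B$ with exactly $d+1$ nilpotent generators rather than carrying the full embedding dimension $r$ and concluding $r\leq d$ afterwards; this is equivalent but makes the contradiction one step shorter. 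In summary: the approaches are mathematically equivalent, but yours replaces the inductive excision argument by a self-contained lemma on $p$-dimension of excellent local rings, which is a worthwhile simplification and would be a natural addition to Section~\ref{section:p-dim}.
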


\begin{rmk}
Note that a reduced $\F_p$-algebra  $A$ which satisfies $[k(x):k(x)^p]<\infty$ for all
maximal points  $x\in\Spec(A)$  is excellent if and only if it is Frobenius finite, as
shown by Kunz and Datta--Smith~\cite[Cor.~2.6]{DaSm}.
\end{rmk}

\begin{proof}
First observe that we can assume without loss of generality that $A$ has finite Krull dimension.
Indeed, we have to show that the finite dimensional ring $A_\p$ is a regular local ring for all prime ideals
$\p\subset A$. But by \cite[Cor.~1.9]{Vorst} the localization $A_\p$ is
$K_{\pdim(A_\p)+1}$-regular.

We show the statement by induction on the finite Krull dimension $d$ of $A$.
For $d=0$ the noetherian ring $A$ is regular if it is reduced and this is immediately implied by $K_1$-regularity.
We proceed with the inductive step for $d>0$. By the above observation
we can assume that $A$ is local.

Next, we want to reduce to the case of complete local $\F_p$-algebras.
Let $A\to \widehat A$ be the completion at the maximal ideal $\fm$.
The ring $A$ is regular if and only if $\widehat A$ is regular.
In order to finish the reduction, we must show that $\widehat A$ is $K_{\pdim(\widehat A)+1}$-regular.
As $\pdim(\widehat A)\leq \pdim(A)$ by Lemma~\ref{lem:p-dim-and-dimension}, it suffices to show that $\widehat A$ is $K_{\pdim(A)+1}$-regular.
For an integer $q\geq 1$ and $i=\pdim(A)+1$ consider the commutative diagram
\begin{equation}\label{thm.comdiag}
\begin{tikzcd}[column sep=small]
N^qK_{i+1}(X) \arrow[d] \arrow[r]    & N^qK_i(\text{$A$ on
  $\fm$}) \arrow[r]\arrow[d,"\cong"'] & N^qK_i(A) \arrow[r]\arrow[d]&
N^qK_i(X) \arrow[d]  \\
 N^qK_{i+1}(\widehat X)  \arrow[r] & N^qK_i(\text{$\widehat A$ on $\fm$}) \arrow[r]          & N^qK_i(\widehat A) \arrow[r]& N^qK_i(\widehat X)  
\end{tikzcd}
\end{equation}
with exact rows, where $X= \Spec(A)\setminus \{\fm\}$ and $\widehat X = \Spec(\widehat  A)\setminus \{\fm\}$. 
We will show that the groups in the corners vanish.
Let $\p\not=\fm$ be a prime ideal of $A$. 
As before,  the local ring $A_{\p}$ is $K_{\pdim(A_{\p})+1}$-regular. As $\dim(A_{\p}) < \dim(A)$, the ring $A_{\p}$ is regular by the inductive hypothesis. So $X$ is a regular scheme.
As $A$ is excellent, the morphism $\Spec(\widehat A)\to \Spec(A)$ is regular. By \cite[IV, Scholie (7.8.3)(v)]{EGA} it follows that also $\widehat X$ is regular.
Hence, the groups in the corners of the above diagram involving $X$ and $\widehat X$ vanish.
By Thomason--Trobaugh excision \cite[Prop.~3.19]{Thomason-Trobaugh} for $N^qK$, the second vertical map is an isomorphism.
This implies that the ring $\widehat A$ is $K_{\pdim(A)+1}$-regular which finishes the reduction.
We can now assume that $A$ is a complete $\F_p$-algebra.

Let $k$ denote the residue field of $A$ and set $e=\pdim(A)$.
We have
\begin{equation}\label{eqn:dimension-in-the-main-theorem}
e = \pdim(k) + d
\end{equation}
by Lemma~\ref{lem:p-dim-and-dimension} which is a finite number as $\pdim(k)$ is finite by assumption.
Since $A$ is $K_{e+1}$-regular, the fibre of $K\to KH$ is $(e+1)$-connected by the
discussion preceding Theorem~\ref{thm:main}.
Hence the fibre of $K/p\to KH/p$ is $(e+1)$-connected as well and in particular
\[
 K_{e+1}(A;\Z/p) \to KH_{e+1}(A;\Z/p)
\]
is injective.
The group on the right-hand side vanishes by Proposition~\ref{prop:vanishing}, and consequently $K_{e+1}(A;\Z/p)=0$.

Consider a minimal set of generators $x_1,\ldots,x_r$ for the maximal ideal $\fm$ of $A$.
We have $r=\dim_k\fm/\fm^2\geq d=\dim (A)$ and $A$ is regular if and only if equality holds.
By Cohen's theorem~\cite[Thm.~28.3]{MatsumuraCA} the equicharacteristic complete local ring $A$ has a coefficient field, i.e.~the projection $A\to A/\fm=k$ admits a split $k\to A$.
Hence also $k\to A/\fm^2\to k$ is the identity and there is a surjection $k[X_1,\ldots,
X_r]\to A/\fm^2$ which induces an isomorphism $B=k[X_1,\ldots, X_r]/(X_{1}, \dots, X_{r})^{2} \cong A/\fm^2$.

Finally we will show that $K_{i}(A;\Z/p)\neq 0$ for all $i\in\{1,\ldots,\pdim(k)+r\}$.
This implies $\pdim(k)+r\leq e$ by the vanishing result from before and
 equation~\eqref{eqn:dimension-in-the-main-theorem} then gives $r\leq d$, whence the regularity of $A$.
In order to show the non-vanishing, let $y_{1}, \dots, y_{s} \in k$ be $p$-independent elements and consider the symbol $\bar\xi\in K_{s+r}(B)$ which is given by the image of $\{y_{1}, \dots, y_{s}, 1+X_{1}, \dots, 1+X_{r}\}$.
This has a preimage $\xi\in K_{s+r}(A)$ as $A\to B$ is surjective. 
Consider the diagram
\[
\begin{tikzcd}
\xi\in K_{s+r}(A) \arrow[r]\arrow[d] & K_{s+r}(A)/p\arrow[d]              \\
\bar\xi\in K_{s+r}(B) \arrow[r]          & K_{s+r}(B)/p         
\end{tikzcd}
\]
The image of $\xi$ in the lower right corner is non-trivial by Lemma~\ref{lem:non-vanishing}.
Hence the image of the element $\xi$ in $K_{s+r}(A)/p$ is non-trivial.
As the canonical map $K_{s+r}(A)/p\to K_{s+r}(A;\Z/p)$ is injective, we obtain $K_{s+r}(A;\Z/p)\neq 0$.
\end{proof}

In conjunction with Lemma~\ref{lem:p-dim-and-dimension} we obtain:

\begin{cor}
Let $k$ be a perfect field of positive characteristic and let $A$ be a $k$-algebra of finite type.
Suppose that $A$ is $K_{\dim(A)+1}$-regular. Then $A$ is a regular ring.
\end{cor}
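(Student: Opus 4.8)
The plan is to obtain this statement as an immediate consequence of Theorem~\ref{thm:main}, so the work consists only in verifying that the three hypotheses of that theorem are satisfied. First, $A$ is excellent, being of finite type over a field. Second, for every point $x\in\Spec(A)$ the residue field $k(x)$ is a finitely generated extension of the perfect field $k$, so Lemma~\ref{lem:p-dim-field-extension} gives $\pdim(k(x)) = \pdim(k) + \trdeg_{k}k(x)$; since $k$ is perfect we have $\pdim(k) = \dim_{k}\Omega_{k} = 0$, and $\trdeg_{k}k(x)\le\dim(A)<\infty$, so $\pdim(k(x))$ is finite. As recalled in Section~\ref{section:p-dim}, finiteness of $\pdim(k(x))$ is equivalent to $[k(x):k(x)^{p}]<\infty$ (indeed $[k(x):k(x)^{p}] = p^{\pdim(k(x))}$), which is exactly the condition required in Theorem~\ref{thm:main}.

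Third, and this is the only point with any content, I would invoke Lemma~\ref{lem:p-dim-and-dimension}: for $A$ of finite type over $k$ it yields $\pdim(A) = \pdim(k) + \dim(A)$, and using $\pdim(k)=0$ once more this reads $\pdim(A) = \dim(A)$. Therefore the assumption that $A$ be $K_{\dim(A)+1}$-regular is literally the assumption that $A$ be $K_{\pdim(A)+1}$-regular, and Theorem~\ref{thm:main} applies to conclude that $A$ is regular.

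I do not expect any genuine obstacle here; the proof is a bookkeeping reduction to Theorem~\ref{thm:main}. The only step worth spelling out is the equality $\pdim(k)=0$ for a perfect field $k$, which holds because $k=k^{p}$ forces the empty family to be an absolute $p$-basis of $k$, or equivalently because $\Omega_{k}=0$ when Frobenius is surjective.
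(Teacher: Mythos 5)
Your proof is correct and matches the paper's own argument: the paper derives the corollary directly from Theorem~\ref{thm:main} (Theorem A) combined with Lemma~\ref{lem:p-dim-and-dimension}, using that $\pdim(k)=0$ for a perfect field so $\pdim(A)=\dim(A)$. Your version merely spells out the routine hypothesis checks (excellence, finiteness of $[k(x):k(x)^p]$) that the paper leaves implicit.
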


We close this section by proving that in characteristic zero Vorst's conjecture can be
generalized from affine algebras over fields to all excellent rings. Recall that  Vorst's
conjecture in characteristic zero
is shown in~\cite{CHW-Vorst}. Our generalization is
based on the Hironaka--Artin algebraization of isolated singularities. The following result is Theorem~\ref{introthm:B} from the introduction.

\begin{thm}\label{thm:char0}
Let $A$ be an excellent noetherian ring of characteristic zero. If $A$ is
$K_{\dim(A)+1}$-regular, then $A$ is regular.
\end{thm}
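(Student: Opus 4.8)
The plan is to mimic the structure of the proof of Theorem~\ref{thm:main}, replacing $\pdim$ by $\dim$ and using the characteristic-zero result of Corti\~nas--Haesemeyer--Weibel \cite{CHW-Vorst} in place of the positive-characteristic non-vanishing input. First I would reduce to the case that $A$ is a noetherian local ring of finite Krull dimension: as in the beginning of the proof of Theorem~\ref{thm:main}, by \cite[Cor.~1.9]{Vorst} (and \cite[Cor.~1.9]{Vorst} again for localization at primes), $K_{n}$-regularity descends to localizations, so it suffices to prove regularity of $A_{\fm}$ for each maximal ideal, and $\dim(A_{\fm}) \le \dim(A)$. I would then argue by induction on $d = \dim(A)$, the case $d = 0$ being immediate since $K_{1}$-regularity forces $A$ to be reduced, hence a field.

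For the inductive step with $A$ local of dimension $d > 0$, I would reduce to a complete local ring exactly as in the proof of Theorem~\ref{thm:main}: let $A \to \widehat{A}$ be the completion at $\fm$, which is a regular morphism since $A$ is excellent, so it suffices to prove $\widehat A$ is regular. Using the same commutative diagram \eqref{thm.comdiag} with exact rows built from the localization sequence relating $\Spec(A)$, the punctured spectrum $X = \Spec(A)\setminus\{\fm\}$, and $K$-theory with supports, together with Thomason--Trobaugh excision \cite[Prop.~3.19]{Thomason-Trobaugh} for $N^{q}K$ (the middle vertical map is an isomorphism) and the inductive hypothesis applied to the regular schemes $X$ and $\widehat X$ (so the corner terms vanish), one gets that $\widehat A$ is $K_{\dim(A)+1}$-regular; and $\dim(\widehat A) = \dim(A)$, so we may assume $A$ is a complete local ring.

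At this point the argument diverges from the positive-characteristic case. With $A$ complete local of characteristic zero, I would invoke Hironaka--Artin algebraization of isolated singularities (the tool flagged in the paragraph preceding the statement): the completion $A = \widehat{A}$ can be realized as the completion of an excellent Henselian (or, after a further localization, essentially-of-finite-type-over-a-field) local ring $A'$ with an isolated singularity whose completion agrees with $A$, and since $K$-theory and hence $N^{q}K$ are insensitive to the regular base change $A' \to \widehat{A'} = A$ in the same way as above — or, more precisely, since regularity of $A$ is equivalent to regularity of $A'$ and $K_{n}$-regularity transfers along the completion map by the excision/punctured-spectrum diagram once the punctured spectrum is regular — one reduces to proving Vorst's conjecture for $A'$, which is an algebra essentially of finite type over a field of characteristic zero. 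That case is precisely \cite[Thm.~0.1]{CHW-Vorst}, and the inductive hypothesis guarantees the punctured spectrum $\Spec(A')\setminus\{\fm\}$ is regular so the hypotheses of their theorem (or the version of it for essentially-of-finite-type algebras) apply.

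The main obstacle I anticipate is the algebraization step: making precise that a complete local $\Q$-algebra (or its excellent Henselian model) with regular punctured spectrum is, up to the relevant homotopy-invariance of $K$-theory, the completion of an algebra essentially of finite type over a field, and that $K_{\dim+1}$-regularity is preserved under passing between $A'$ and $\widehat{A'}$. This requires the Artin approximation / Néron--Popescu desingularization machinery to present $A$ as a filtered colimit of smooth (or regular) algebras over a finite-type base together with control of the singular locus, plus continuity of $K$-theory under filtered colimits and the excision argument to move $K_{n}$-regularity across the completion. Once that bridge is built, everything else is a formal transcription of the dimension induction and the CHW theorem does the remaining work.
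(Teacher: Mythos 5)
Your proposal follows essentially the same route as the paper: reduce to a complete local ring $A$ of finite dimension, use the inductive hypothesis to see that the singularity is isolated, invoke Hironaka--Artin algebraization to realize $A$ as the $\fm$-adic completion of a finite-type $k$-algebra $R$ regular away from $\fm$, transfer $K_{\dim(A)+1}$-regularity from $A$ to $R$ via the Thomason--Trobaugh excision diagram with the punctured spectra, and apply the Corti\~nas--Haesemeyer--Weibel theorem to $R$. The obstacle you flag at the end is not actually an obstacle: after fixing a coefficient field $k\subset A$ via Cohen's theorem, the hypotheses of Hironaka--Artin algebraization \cite[Thm.~3.8]{Artin} are verified directly by \cite[$0_{\mathrm{IV}}$ Prop.~22.7.2]{EGA} precisely because the punctured spectrum is regular, and the conclusion is \emph{literally} that $A$ is the completion of a finite-type $k$-algebra at a maximal ideal --- no N\'eron--Popescu, filtered-colimit, or continuity-of-$K$-theory argument is needed beyond the excision diagram you already set up.
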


\begin{proof}
  As in the proof of Theorem~\ref{thm:main} we can assume without loss of generality that
  $A$ has finite Krull dimension and that it is local. So we prove Theorem~\ref{thm:char0}
  by induction on $d=\dim(A)$. By the induction assumption the localization $A_g$ is regular for any $g\in
  A\setminus A^\times$, so $A$ has at most an isolated singularity at
  its maximal ideal $\fm$. Arguing as in diagram~\eqref{thm.comdiag} we can also assume
  that $A$ is complete. 
  
  Let $k\subset A$ be a field of coefficients for $A$~\cite[Thm.~28.3]{Matsumura}.
  Then by~\cite[$0_{\mathrm{IV}}$ Prop.~22.7.2]{EGA} the $k$-algebra $A$ satisfies the assumptions
  of  Hironaka--Artin algebraization \cite[Thm.~3.8]{Artin} so that 
 $A$ is the completion of a
  $k$-algebra $R$ of finite type at a maximal ideal $\fm$. We can assume that $R$ is regular away from the ideal $\fm$ and that $\dim(R)=\dim(A)$. Let $X$ be
  $\Spec R\setminus \{\fm \}$, $\widehat X =  \Spec (A) \setminus {\fm}$, and let $q\ge 1$. In the commutative diagram with exact rows
\begin{equation}\label{thm.comdiag2}
\begin{tikzcd}[column sep=small]
N^qK_{i+1}(X) \arrow[d] \arrow[r]    & N^qK_i(\text{$R$ on
  $\fm$}) \arrow[r]\arrow[d,"\cong"'] & N^qK_i(R) \arrow[r]\arrow[d]&
N^qK_i(X) \arrow[d]  \\
 N^qK_{i+1}(\widehat X)  \arrow[r] & N^qK_i(\text{$A$ on $\fm$}) \arrow[r]          & N^qK_i( A) \arrow[r]& N^qK_i(\widehat X)  
\end{tikzcd}
\end{equation}
the groups in the corners involving $X$ and $\widehat X$ vanish since these schemes are regular. The second vertical map
is an isomorphism by Thomason--Trobaugh excision \cite[Prop.~3.19]{Thomason-Trobaugh} for
$N^qK$. So for $i\le \dim(R)+1 = \dim(A)+1$ the vanishing of $N^qK_i(A)$ implies the
vanishing of  $N^qK_i(R)$. By~\cite[Thm.~0.1]{CHW-Vorst} the ring $R$ is regular and so is its completion $A$.
\end{proof}

\section{A one-dimensional analog of Vorst's conjecture in mixed  characteristic}

In this section we prove Theorem~\ref{introthm:mixed-char} (see
Theorem~\ref{thm:charmixed} below). The proof is based on
calculations involving de Rham--Witt complex in mixed
characteristic, which was introduced in \cite[Thm.~A]{HesselholtMadsen-drW}.
The key non-vanishing result is
Proposition~\ref{sec6:nonvanishing}.

\subsection{de Rham--Witt computations}

Let $p$ be an odd prime. 
For any commutative ring $R$ we denote by $R[\epsilon]$ the ring of dual numbers over $R$, i.e.~$R[\epsilon] \cong R[t]/(t^{2})$ with $\epsilon$ corresponding to the class of $t$.
Recall from \cite[Lemma~4.1.1]{HesselholtMadsen-drW} that every element of the ring $W_{n}(R[t])$ of $p$-typical Witt vectors of length $n$ may be written uniquely as a finite sum 
\begin{equation}
	\label{eq:HM-formula-Witt}
\sum_{j\in \N_{0}} a_{0,j}^{(n)} [t]_{n}^{j} + \sum_{s=1}^{n-1}\sum_{j \in I_{p}} V^{s}(a_{s,j}^{(n-s)}[t]_{n-s}^{j})
\end{equation}
with $a_{s,j}^{(n-s)} \in W_{n-s}(R)$. Here $I_{p}$ is the set of natural numbers not divisible by $p$
and for any ring $A$ the symbol $[\cdot]_{n} \colon A \to W_{n}(A)$ denotes the Teichm\"uller map.
The index $n$ will usually be clear from the context, and we often drop it from the notation to increase readability. 
With the same proof as in \cite{HesselholtMadsen-drW} one obtains the following lemma.
\begin{lemma}
	\label{lem:Witt-vectors-dual-numbers}
Any element in $W_{n}(R[\epsilon])$ may be written uniquely in the form
\[
a_{0,0}^{(n)} + a_{0,1}^{(n)}[\epsilon]_{n} + \sum_{s=1}^{n-1} V^{s}(a_{s,1}^{(n-s)}[\epsilon]_{n-s})
\]
with $a_{s,j}^{(n-s)} \in W_{n-s}(R)$.
The kernel of the canonical surjection $W_{n}(R[t]) \to W_{n}(R[\epsilon])$ consists of those elements \eqref{eq:HM-formula-Witt} for which $a^{(n-s)}_{s,j} = 0$ whenever $j =0$ or $j=1$.
\end{lemma}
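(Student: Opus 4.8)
The plan is to derive Lemma~\ref{lem:Witt-vectors-dual-numbers} as a direct consequence of the structural formula \eqref{eq:HM-formula-Witt} for $W_n(R[t])$, simply by reducing that formula modulo the ideal generated by $t^2$. The surjection $R[t]\to R[\epsilon]$ sending $t\mapsto\epsilon$ is functorially lifted by $W_n$ to a surjection $W_n(R[t])\to W_n(R[\epsilon])$, since $W_n$ preserves surjections of rings (Witt vectors commute with surjections because they are built from polynomial operations and a quotient ring has every element represented). So every element of $W_n(R[\epsilon])$ is the image of some expression \eqref{eq:HM-formula-Witt}, and I only need to identify which Teichm\"uller monomials $[t]_n^j$ survive and which relations among their images arise.

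First I would record that for $j\geq 2$ the Teichm\"uller element $[t]_n^j=[t^j]_n$ maps to $[\epsilon^j]_n=[0]_n=0$ in $W_n(R[\epsilon])$, because $\epsilon^j=0$ for $j\geq2$ and the Teichm\"uller map is multiplicative with $[0]=0$; similarly $V^s(a[t]_{n-s}^j)=V^s(a[t^j]_{n-s})$ maps to $V^s(a[\epsilon^j]_{n-s})=0$ for $j\geq 2$. Hence in the image only the summands with $j=0$ and $j=1$ can contribute. For $j=0$ the term $a_{0,0}^{(n)}[t]_n^0 = a_{0,0}^{(n)}$ maps to $a_{0,0}^{(n)}\in W_n(R)\subset W_n(R[\epsilon])$; for $j=1$ we get $a_{0,1}^{(n)}[\epsilon]_n$ in degree zero of the $V$-filtration and $V^s(a_{s,1}^{(n-s)}[\epsilon]_{n-s})$ for $1\le s\le n-1$ (note $1\in I_p$ since $p$ is odd, indeed $p\nmid 1$ always). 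This shows every element of $W_n(R[\epsilon])$ has a representation of the asserted shape, and that the kernel of $W_n(R[t])\to W_n(R[\epsilon])$ contains all elements \eqref{eq:HM-formula-Witt} with $a^{(n-s)}_{s,j}=0$ for $j\in\{0,1\}$.

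For the uniqueness and the reverse kernel inclusion, I would argue that the listed elements $1$ (i.e.\ the $j=0$ generator), $[\epsilon]_n$, and the $V^s([\epsilon]_{n-s})$ for $1\le s\le n-1$, with coefficients in the appropriate $W_{n-s}(R)$, are $W_n(R)$-linearly independent in $W_n(R[\epsilon])$ in the strong sense that a vanishing combination forces all coefficients to vanish. This follows from the uniqueness half of \eqref{eq:HM-formula-Witt}: if an element of the asserted form in $W_n(R[\epsilon])$ is zero, lift it to the corresponding element of $W_n(R[t])$ (same coefficients, same $j$'s), observe it lies in the kernel, and then — by what we are proving — it must be of the form described in the last sentence of the lemma; but an element written in the normal form \eqref{eq:HM-formula-Witt} using only $j\in\{0,1\}$ that also lies in the kernel must have all those coefficients zero. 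Concretely, since the kernel is described in terms of the unique normal form \eqref{eq:HM-formula-Witt}, an element whose normal form involves only $j\in\{0,1\}$ is in the kernel iff those coefficients all vanish; this is exactly the two statements of the lemma, and they are logically intertwined, so I would prove them together by transporting the uniqueness in \eqref{eq:HM-formula-Witt} along the canonical surjection. The only genuinely delicate point — and the one I would state carefully — is that the kernel of $W_n(R[t])\to W_n(R[\epsilon])$ is \emph{exactly} the set of normal-form expressions with $a^{(n-s)}_{s,j}=0$ for $j\le1$, with no further relations; this requires knowing that the ideal $(t^2)\subset R[t]$ is mapped by $W_n$ onto the kernel, for which I would invoke the same computation as in \cite{HesselholtMadsen-drW} showing that $W_n$ of the surjection $R[t]\twoheadrightarrow R[t]/(t^2)$ has kernel spanned by the evident Witt-vector analogues of multiples of $t^2$, i.e.\ that reduction mod $(t^2)$ kills precisely the $j\ge2$ part of \eqref{eq:HM-formula-Witt}. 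I expect this last verification — pinning down the kernel exactly rather than just an inclusion — to be the main obstacle, and it is handled by literally repeating the proof of \cite[Lemma~4.1.1]{HesselholtMadsen-drW}, as the statement of the lemma already indicates.
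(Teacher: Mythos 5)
Your proposal is correct and takes essentially the same route as the paper: the paper's proof of this lemma is literally the one-line remark that the argument of Hesselholt--Madsen, Lemma~4.1.1, carries over verbatim, and your analysis --- reduce the normal form \eqref{eq:HM-formula-Witt} modulo $(t^2)$, observe that the $j\ge 2$ terms die because $[\epsilon^j]=0$, and defer the identification of the kernel (the reverse, nontrivial inclusion) to the Hesselholt--Madsen computation --- unpacks exactly what that citation refers to. You are right that the uniqueness in $W_n(R[\epsilon])$ and the exact kernel description are proved in one stroke by transporting \cite[Lemma~4.1.1]{HesselholtMadsen-drW}, and you are honest about where the real work sits; the only cosmetic point is that $1\in I_p$ for every prime $p$, not only odd $p$, so the parenthetical about $p$ being odd is unnecessary.
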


We now assume that $R$ is a $\Z_{(p)}$-algebra. 

\begin{lemma}
	\label{lem:deRham-Witt-dual-numbers}
Any element in $W_{n}\Omega^{q}_{R[\epsilon]}$ can be written uniquely as a finite  sum of the form
\begin{equation}
	\label{eq:elements-in-drW-of-dual-numbers}
a_{0,0}^{(n)} + a_{0,1}^{(n)}[\epsilon]_{n} + b_{0}^{(n)} d[\epsilon]_{n} 
	+ \sum_{s=1}^{n-1} \left( V^{s}(a_{s,1}^{(n-s)}[\epsilon]_{n-s} ) + dV^{s}( b_{s}^{(n-s)}[\epsilon]_{n-s} ) \right)
\end{equation}
where $a_{s,i}^{(n-s)} \in W_{n-s}\Omega^{q}_{R}$ and $b_{s}^{(n-s)} \in W_{n-s}\Omega^{q-1}_{R}$.
In other words, as abelian group
\begin{equation}
	\label{eq:deRham-Witt-direct-sum-decomposition}
W_{n}\Omega^{q}_{R[\epsilon]} \cong 
	W_{n}\Omega^{q}_{R} \oplus W_{n}\Omega^{q}_{R} \oplus W_{n}\Omega^{q-1}_{R}
	\oplus \bigoplus_{s=1}^{n-1} \left( W_{n-s}\Omega^{q}_{R} \oplus W_{n-s}\Omega^{q-1}_{R} \right).
\end{equation}
The structure maps of the pro-system $W_{\bullet}\Omega^{q}_{R[\epsilon]}$ are induced by the ones of $W_{\bullet}\Omega^{q}_{R}$.
On the first summand $W_{n}\Omega^{q}_{R}$ the structure maps $d$, $F$, and $V$ are given by the underlying maps of $W_{\bullet}\Omega^{*}_{R}$, on the other summands they are given in Table~\ref{table:de-rham-witt}.

The product is given as follows.  On the first summand, the product is the one from $W_{n}\Omega^{*}_{R}$. The product of two summands from \eqref{eq:elements-in-drW-of-dual-numbers} each of which has an $[\epsilon]$ vanishes. Finally, for $a' \in W_{n}\Omega^{q'}_{R}$ the left multiplication with $a'$ is given in Table~\ref{table:product-dRW}.
\end{lemma}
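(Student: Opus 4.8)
The plan is to imitate the proof of the analogous structure theorem for $W_n\Omega^*_{R[t]}$ in \cite[\S4]{HesselholtMadsen-drW}, tracking carefully how everything descends along the surjection $W_n(R[t]) \to W_n(R[\epsilon])$ identified in Lemma~\ref{lem:Witt-vectors-dual-numbers}. Concretely, one first notes that the de Rham--Witt complex $W_n\Omega^*_{R[\epsilon]}$ is, as a differential graded algebra, a quotient of $W_n\Omega^*_{R[t]}$, so the Hesselholt--Madsen description of the latter (every element a finite sum built from $R$-coefficients, powers of $[t]_n$, and the operators $V^s$, $d$) surjects onto it. Working in the dual-numbers quotient collapses all the contributions with $[t]^j_n$ for $j \geq 2$ to zero at the level of Witt vectors (that is exactly Lemma~\ref{lem:Witt-vectors-dual-numbers}), and one has to propagate this through the extra terms $b_0^{(n)}\,d[\epsilon]_n$ and $dV^s(b_s^{(n-s)}[\epsilon]_{n-s})$ that the differential produces. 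So the \emph{surjectivity} of the claimed spanning set \eqref{eq:elements-in-drW-of-dual-numbers} is essentially bookkeeping: every monomial $\omega\,[\epsilon]^j_n$, $\omega\,d[\epsilon]_n$, $dV^s(\eta\,[\epsilon]_{n-s})$ etc.\ can be rewritten, using the Leibniz rule, the relations $FdV=d$, $Fd[\epsilon]=[\epsilon]^{p-1}d[\epsilon]$ and $\epsilon^2=0$, in the stated normal form.

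The genuinely substantive part is \emph{uniqueness}, i.e.\ that the sum \eqref{eq:deRham-Witt-direct-sum-decomposition} is direct. Here I would follow Hesselholt--Madsen and not try to prove this by hand in $W_n\Omega^*_{R[\epsilon]}$ directly; instead, reduce to the free/smooth case and use the functoriality and continuity of the de Rham--Witt complex. Write $R$ as a filtered colimit of finitely generated $\Z_{(p)}$-subalgebras and each of those as a quotient of a polynomial ring; since $W_n\Omega^*_{(-)}$ commutes with filtered colimits and the formation of the right-hand side of \eqref{eq:deRham-Witt-direct-sum-decomposition} does too, and since the structure maps and products are determined on generators, it suffices to establish the decomposition when $R$ is a polynomial $\Z_{(p)}$-algebra (or even smooth). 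In that case $R[\epsilon]$ has $W_n\Omega^*$ computable by Illusie's/Hesselholt--Madsen's methods — in fact one can compare with the known computation of $W_n\Omega^*_{R[t]}$ and intersect with the kernel described in Lemma~\ref{lem:Witt-vectors-dual-numbers}, or invoke the relative de Rham--Witt complex of $R[\epsilon]/R$ together with the HKR-type splitting $W_n\Omega^*_{R[\epsilon]} \cong W_n\Omega^*_R \oplus (\text{relative part})$. The point is that over a smooth base all the groups in sight are explicit and torsion-free enough that a dimension/basis count forces the sum to be direct, and this then transports back to arbitrary $\Z_{(p)}$-algebras $R$ by the colimit argument.

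With the additive decomposition in hand, the statements about the structure maps and the product are a matter of direct computation on the given generators, which is why the bulk of the data is deferred to Tables~\ref{table:de-rham-witt} and~\ref{table:product-dRW} rather than written out in the proof. For the operators $d$, $F$, $V$ one applies the de Rham--Witt relations term by term: e.g.\ $F(a_{0,1}[\epsilon]_n) = F(a_{0,1})F([\epsilon]_n) = F(a_{0,1})[\epsilon]_{n-1}$ using $F([\epsilon]_n)=[\epsilon^p]_{n-1}=0$ in the dual numbers when $p\geq 2$ — wait, more precisely $[\epsilon]_{n-1}^p = [\epsilon^p]_{n-1}=0$, so one tracks exactly where such vanishings occur — and similarly $V^s$, $d$, and $R$ are computed using $FV=p$, $dd=0$, $FdV=d$, and $d([\epsilon]_n)\cdot[\epsilon]_n = \tfrac12 d([\epsilon]_n^2)=0$, which is where the hypothesis that $p$ is odd (so $R$ is a $\Z_{(p)}$-algebra and $2$ is invertible) enters. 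For the product one uses that any product of two terms each containing an $[\epsilon]$ lands in the ideal generated by $\epsilon^2=0$, together with the Leibniz rule to handle products involving $d[\epsilon]_n$ or $dV^s([\epsilon]_{n-s})$. The main obstacle, as indicated, is the directness of \eqref{eq:deRham-Witt-direct-sum-decomposition}: making the reduction to the smooth case airtight and quoting the correct computation of $W_n\Omega^*$ of a polynomial ring (or of $R[t]$) with enough precision to read off linear independence over $W_{\bullet}\Omega^*_R$ — everything after that is the same Leibniz-rule bookkeeping as in \cite[\S4.1]{HesselholtMadsen-drW}.
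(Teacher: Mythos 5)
Your general direction matches the paper's: view $W_n\Omega^*_{R[\epsilon]}$ as a quotient of $W_n\Omega^*_{R[t]}$, invoke the Hesselholt--Madsen normal form for the latter, and use Lemma~\ref{lem:Witt-vectors-dual-numbers} to control what is killed. But there is a genuine gap exactly at the step you single out as the main obstacle, the directness of the sum. What you are missing is the structure of the kernel of $W_n\Omega^*_{R[t]} \to W_n\Omega^*_{R[\epsilon]}$ in \emph{all} degrees, not just degree zero. Lemma~\ref{lem:Witt-vectors-dual-numbers} only identifies the kernel of the degree-zero map $W_n(R[t]) \to W_n(R[\epsilon])$. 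The paper closes the gap by invoking \cite[Lemma~2.4]{GeisserHesselholtComplete}, which says that for a surjection of rings the induced map of de Rham--Witt complexes is surjective with kernel the differential graded ideal generated by the kernel in degree zero; combined with \cite[Thm.~4.2.8]{HesselholtMadsen-drW} this yields both the spanning set and its independence by direct inspection, for an arbitrary $\Z_{(p)}$-algebra $R$, with no reduction step needed.

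Your proposed workaround, reducing to $R$ smooth over $\Z_{(p)}$ and then arguing by a dimension or basis count, does not repair this. First, even when $R$ is a polynomial ring, $R[\epsilon]$ is not, so one still has to pass from $R[t]$ to the quotient $R[\epsilon]$; the reduction does nothing for that step. Second, passing from a polynomial ring $P$ to a general quotient $R=P/I$ goes in the wrong direction: the map $W_n\Omega^*_{P[\epsilon]} \to W_n\Omega^*_{R[\epsilon]}$ is not injective, so a direct-sum decomposition over $P$ does not automatically descend to one over $R$; you would again need to know the kernel of a surjection, which is exactly what is missing. Third, the groups $W_n\Omega^q_R$ are typically not finitely generated even for smooth $R$, so ``dimension count'' is not literally available. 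In short, the result of Geisser--Hesselholt on kernels of surjections of de Rham--Witt complexes is a nontrivial input that your colimit-and-smoothness reduction does not replace, and without it the uniqueness part of the lemma remains unproved.
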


\begin{table}[t]
	\renewcommand{\arraystretch}{1.3}
\begin{small}
\begin{tabular}{cccc}
\toprule
	$x$			& $d(x)$ 							& $F(x)$ 				& $V(x)$ \\ 
\midrule
$a[\epsilon]$		& $(da)[\epsilon] + (-1)^{q}d[\epsilon]$	& $0$				& $V(a[\epsilon])$ \\
$bd[\epsilon]$		& $(db)d[\epsilon]$ 					& $0$				& $(-1)^{q-1}pdV(b[\epsilon]) + (-1)^{q}V((db)[\epsilon])$ \\
$V^{s}(a[\epsilon])$	& $dV^{s}(a[\epsilon])$				& $pV^{s-1}(a[\epsilon])$ 	& $V^{s+1}(a[\epsilon])$ \\
$dV^{s}(b[\epsilon])$	& $0$							& $dV^{s-1}(b[\epsilon])$	& $pdV^{s+1}(b[\epsilon])$ \\
\bottomrule
\end{tabular}
\end{small}
\caption{Structure maps on $W_{n}\Omega^{q}_{R[\epsilon]}$}
\label{table:de-rham-witt}
\end{table}

\begin{table}[t]
	\renewcommand{\arraystretch}{1.3}
\begin{small}
\begin{tabular}{ccccc}
\toprule
$x$			& $a[\epsilon]$ 		& $bd[\epsilon]$ 	& $V^{s}(a[\epsilon])$  			& $dV^{s}(b[\epsilon])$ \\ 
\midrule
$a'\cdot x$	& $(a'a)[\epsilon]$ 	& $(a'b)d[\epsilon]$	& $ V^{s}( F^{s}(a')a [\epsilon])$ 	&  $(-1)^{q+qq'} V^{s}(bF^{s}(da')[\epsilon]) $   \\
			&				&				&							& $+ (-1)^{qq'}dV^{s}( bF^{s}(a')[\epsilon])$ \\
\bottomrule
\end{tabular}
\end{small}
\caption{Multiplication by $a' \in W_{n}\Omega^{q'}_{R}$ on $W_{n}\Omega^{q}_{R[\epsilon]}$}
\label{table:product-dRW}
\end{table}

\begin{proof}
By \cite[Lemma~2.4]{GeisserHesselholtComplete} the canonical map $W_{n}\Omega^{*}_{R[t]} \to W_{n}\Omega^{*}_{R[\epsilon]}$ is surjective with kernel the dg-ideal generated by the kernel of the map $W_{n}(R[t]) \to W_{n}(R[\epsilon])$. 
Using the description of this ideal in Lemma~\ref{lem:Witt-vectors-dual-numbers},
the lemma now follows directly from the corresponding description of the de Rham--Witt complex of the polynomial ring $R[t]$ in \cite[Thm.~4.2.8]{HesselholtMadsen-drW}. 
\end{proof}

For ease of notation, we introduce the following abbreviation. For any ring $k/\mathbb F_p$ we set 
\[
B(k) = W_{2}(k)[\epsilon]/(p\epsilon).
\]
For example, $B(\F_{p}) = \Z[\epsilon]/(p, \epsilon)^{2}$.

\begin{prop}
	\label{prop:non-vanishing-in-dRW}
For $n \geq 2$, the element $d[1+p]_{n}d[1+\epsilon]_{n}$ does not vanish in $W_{n}\Omega^{2}_{B(\F_{p})}/p$.
\end{prop}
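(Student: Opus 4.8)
The plan is to compute $W_n\Omega^2_{B(\F_p)}$ explicitly using Lemma~\ref{lem:deRham-Witt-dual-numbers} applied with $R = W_2(\F_p)/(p\cdot\text{something})$—more precisely, with $R$ the ring whose dual-number algebra is $B(\F_p)$. First I would identify the right base ring: since $B(k) = W_2(k)[\epsilon]/(p\epsilon)$, the relation $p\epsilon = 0$ means that $B(\F_p)$ is not literally $R[\epsilon]$ for $R = W_2(\F_p) = \Z/p^2$, but rather a quotient of $W_2(\F_p)[\epsilon]$ by the extra relation. So the first step is to record the effect of the surjection $W_2(\F_p)[\epsilon] \to B(\F_p)$ on de Rham--Witt complexes, i.e.\ to describe $W_n\Omega^*_{B(\F_p)}$ as a quotient of $W_n\Omega^*_{W_2(\F_p)[\epsilon]}$ by the dg-ideal generated by $p[\epsilon]_n$, again via \cite[Lemma~2.4]{GeisserHesselholtComplete}. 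This lets me pass to the explicit direct-sum description \eqref{eq:deRham-Witt-direct-sum-decomposition} with $R = \Z/p^2$ (which has $W_m\Omega^*_{\Z/p^2}$ computable: $W_m(\Z/p^2)$ is a small explicit ring, and $\Omega^{>0}$ of it is torsion, controllable for small $m$).

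The core of the argument is then to locate $d[1+p]_n d[1+\epsilon]_n$ inside this decomposition and show it is nonzero mod $p$. The idea is that $[1+\epsilon]_n = 1 + [\epsilon]_n \cdot(\text{unit})$ up to lower Teichmüller corrections, so $d[1+\epsilon]_n$ lands in the ``$b_0^{(n)}d[\epsilon]_n$'' summand (the $W_n\Omega^{q-1}_R$ piece for $q=1$, so the $W_n\Omega^0_R = W_n(R)$ piece) with leading term $d[\epsilon]_n$ itself. Multiplying by $d[1+p]_n \in W_n\Omega^1_{W_2(\F_p)}$—pulled back from the base—and using the left-multiplication rule in Table~\ref{table:product-dRW} (the ``$bd[\epsilon]$'' column, with $b = d[1+p]_n$ and $q' = 1$), the product $d[1+p]_n \cdot d[\epsilon]_n$ sits in the first ``$b_0 d[\epsilon]$'' summand and equals $\bigl(d[1+p]_n\bigr)d[\epsilon]_n$, i.e.\ it is detected by the class of $d[1+p]_n \in W_n\Omega^1_{W_2(\F_p)}/(\text{the }p\epsilon\text{-relation contributions})$. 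So the whole computation reduces to: \emph{$d[1+p]_n$ is nonzero in $W_n\Omega^1_{\Z/p^2}/p$} for $n \geq 2$. This last fact I would extract from the Hesselholt--Madsen calculations \cite{HesselholtMadsen-drW}—indeed $[1+p]_n$ for $n\geq 2$ has a nontrivial $V$-part, and $d$ of it is a known nonzero torsion class; for $n=2$ one can even verify it by hand in the small ring $W_2(\Z/p^2)$.

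The main obstacle I anticipate is bookkeeping: the decomposition \eqref{eq:deRham-Witt-direct-sum-decomposition} is for $R[\epsilon]$, but $B(\F_p)$ carries the extra relation $p\epsilon = 0$, so I must carefully track which summands survive in the quotient and how the $d$, $F$, $V$ and multiplication formulas of Tables~\ref{table:de-rham-witt} and~\ref{table:product-dRW} descend. In particular the relation $p[\epsilon]_n = 0$ kills parts of the ``$V^s(a[\epsilon])$'' summands (those of the form $p \cdot dV^s(\cdots)$, which show up e.g.\ in $V$ of the $bd[\epsilon]$ row), and I need to be sure that the summand containing $d[1+p]_n d[\epsilon]_n$ is not among the killed ones—equivalently, that reducing mod $p$ and mod $(p\epsilon)$ does not annihilate the target class $d[1+p]_n$. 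Since $d[1+p]_n$ is pulled back from $W_n\Omega^1_{W_2(\F_p)}$, which has nothing to do with $\epsilon$, this should be clean, but it is the step requiring genuine care rather than routine substitution. Once the reduction to nonvanishing of $d[1+p]_n \bmod p$ in $W_n\Omega^1_{\Z/p^2}$ is in place, the proof is finished by citing (or directly checking for $n=2$) the relevant Hesselholt--Madsen computation.
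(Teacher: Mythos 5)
Your high-level strategy matches the paper's: use the decomposition of Lemma~\ref{lem:deRham-Witt-dual-numbers} to locate $d[1+p]_{n}d[1+\epsilon]_{n}$ in the summand $W_{n}\Omega^{1}_{R}\cdot d[\epsilon]_{n}$, then reduce to nonvanishing of $d[1+p]_{n}$ mod $p$ over a base. But there are genuine gaps in the part you declare ``should be clean.'' First, the dg-ideal you need to quotient out is \emph{not} generated by $p[\epsilon]_{n}$. The kernel of $W_{n}(R[\epsilon]) \to W_{n}(R[\epsilon]/(p\epsilon))$ is generated additively by $V^{i}([x_{i}p\epsilon]_{n-i})$, and since the Teichm\"uller map is multiplicative, the basic generator is $[p\epsilon]_{n} = [p]_{n}[\epsilon]_{n}$, which is different from $p[\epsilon]_{n}$ because $[p]_{n} \neq p$ in $W_{n}(R)$ for $n\geq 2$. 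Second, showing that this dg-ideal doesn't annihilate $d[1+p]_{n}d[\epsilon]_{n}$ in degree $2$ is exactly where the real work lies. Since $d[p\epsilon]_n = [p]_nd[\epsilon]_n + [\epsilon]_nd[p]_n$, one must show that multiplication by $[p]_{n}$ is the zero map on $W_{n}\Omega^{1}_{\Z_{(p)}}$ (Claim~\ref{claim:vanishing-of-[p]} in the paper); this is a genuine computation using the explicit structure of $W_{n}\Omega^{*}_{\Z_{(p)}}$ and the Teichm\"uller expansion \eqref{eq:coordinates}, and it is not a consequence of the forms being ``pulled back from the base.''

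Third, your plan treats $W_{n}\Omega^{*}_{\Z/p^{2}}$ as directly ``controllable,'' but no such description is off-the-shelf; the paper instead proves (Claim~\ref{claim:mod-p-iso}) that $W_{n}\Omega^{*}_{\Z_{(p)}}/p \to W_{n}\Omega^{*}_{\Z/(p^{2})}/p$ is an isomorphism, which transfers the problem to $\Z_{(p)}$ where $W_{n}\Omega^{*}$ is explicitly known \cite[Ex.~1.2.4]{HesselholtMadsen-drW}. Once there, the final step --- checking $d[1+p]_{2}$ is not divisible by $p$ in $W_{2}\Omega^{1}_{\Z_{(p)}}$ --- is indeed a short hand computation, as you anticipated. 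So the skeleton is right and your instinct about where the bookkeeping lurks is correct, but you would need to supply: the correct identification of the kernel ideal, the vanishing of $[p]_{n}$-multiplication on $1$-forms over $\Z_{(p)}$, and the mod-$p$ comparison between $\Z_{(p)}$ and $\Z/p^{2}$.
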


\begin{proof}
We write $\omega_{n,R}^{*}$ for the quotient of $W_{n}\Omega^{*}_{R[\epsilon]}$ 
by the dg-ideal given by  $ \bigoplus_{s=1}^{n-1} \left( W_{n-s}\Omega^{*}_{R} \oplus W_{n-s}\Omega^{*-1}_{R} \right)$  in \eqref{eq:deRham-Witt-direct-sum-decomposition}.

\begin{claim}
	\label{claim:d-epsilon}
We have $d[1+\epsilon]_{n} = d[\epsilon]_{n}$ in $\omega^{1}_{n,R}$.
\end{claim}
\begin{proof}
In $W_{n}(R[\epsilon])$ we have $[1+\epsilon]_{n} = 1 + [\epsilon]_{n} + V(x)$ for some element $x \in W_{n-1}(R[\epsilon])$. From Lemma~\ref{lem:Witt-vectors-dual-numbers} we see that $V(x)$ is of the form $V(x_{0}) + \sum_{s=1}^{n-1} V^{s}(x_{s,1}[\epsilon]_{n-s})$ with $x_{0} \in W_{n-1}(R)$ and $x_{s,1} \in W_{n-s}(R)$. Since the ring homomorphism $W_{n}(R[\epsilon]) \to W_{n}(R)$ that is induced by $\epsilon \mapsto 0$ must  send $V(x)$ to $0$, we find that in fact $V(x_{0}) = 0$. Thus 
\[
[1+\epsilon]_{n} = 1 + [\epsilon]_{n} + \sum_{s=1}^{n-1} V^{s}(x_{s,1}[\epsilon]_{n-s}).
\]
Applying $d$, the claim follows.
\end{proof}

\begin{claim}
	\label{claim:vanishing-of-[p]}
Multiplication by $[p]_{n}$ is the zero map on $W_{n}\Omega^{1}_{\Z_{(p)}}$ and hence also on $W_{n}\Omega^{1}_{R}$ where $R$ is any quotient of $\Z_{(p)}$.
\end{claim}
\begin{proof}
By \cite[Ex.~1.2.4]{HesselholtMadsen-drW} we have  isomorphisms
\begin{equation}
	\label{eq:drW-isos-Zp}
W_{n}(\Z_{(p)}) \cong \bigoplus_{i=0}^{n-1} \Z_{(p)} \cdot V^{i}(1) \quad \text{ and }  \quad
W_{n}\Omega^{1}_{\Z_{(p)}} \cong \bigoplus_{i=1}^{n-1} \Z/p^{i}\Z \cdot dV^{i}(1)
\end{equation}
and for $i,j \in \{ 0, \dots, n-1\}$ we have  $V^{i}(1)dV^{j}(1) = p^{i}dV^{j}(1)$ if $i < j $  and $=0$ else.
For every element $x \in \Z_{(p)}$ we have
\begin{equation}
	\label{eq:coordinates}
[x]_{n} = x\cdot [1]_{n} + \sum_{i=1}^{n-1} p^{-i}(x^{p^{i}} - x^{p^{i-1}}) \cdot V^{i}( [1]_{n-i})
\end{equation}
as one checks by computing the ghost components of both sides.
By these formulas,  the action of $[p]_{n}$ on $dV^{j}(1)$ is given by
\[
[p]_{n}\cdot dV^{j}(1) = p dV^{j}(1) + \sum_{i=1}^{j-1} (p^{p^{i}} - p^{p^{i-1}}) dV^{j}(1) = p^{p^{j-1}} dV^{j}(1) = 0
\]
as $p^{j-1} \geq j$ for every $j \geq 1$. This proves the claim.
\end{proof}

\begin{claim}
	\label{claim:retract}
For $R = \Z_{(p)}$ or some quotient of it, the projection $W_{n}\Omega^{2}_{R[\epsilon]} \to \omega_{n,R}^{2}$ factors through the 
 canonical surjection $W_{n}\Omega^{2}_{R[\epsilon]} \to W_{n}\Omega^{2}_{R[\epsilon]/(p\epsilon)}$. 
\end{claim}
Note that for $R = \Z/(p^{2})$, $R[\epsilon]/(p\epsilon) = B(\F_{p})$.
\begin{proof}
The kernel of the surjection $W_{n}\Omega^{*}_{R[\epsilon]} \to W_{n}\Omega^{*}_{R[\epsilon]/(p\epsilon)}$ is the dg-ideal generated by the kernel of the map $W_{n}(R[\epsilon]) \to W_{n}(R[\epsilon]/(p\epsilon))$. An element of this kernel is of the form $\sum_{i=0}^{n-1} V^{i}( [x_{i}p\epsilon]_{n-i} )$ with $x_{i} \in R$.
The dg-ideal generated by the elements $V^{i}( [x_{i}p\epsilon]_{n-i})$ for $i > 0$ lies in the kernel of $W_{n}\Omega^{*}_{R[\epsilon]} \to \omega^{*}_{n,R}$ by definition. 

It remains to show that the  dg-ideal generated by $[p\epsilon]_{n}$  in  $\omega^{*}_{n,R}$ vanishes in degree 2.
We have $d[p\epsilon] = [p]d[\epsilon] + [\epsilon]d[p]$.
Hence the vanishing of $W_{n}\Omega^{2}_{R}$ \cite[Ex.~1.2.4]{HesselholtMadsen-drW}, Claim~\ref{claim:vanishing-of-[p]}, and Lemma~\ref{lem:deRham-Witt-dual-numbers} together imply that the product of $d[p\epsilon]$ with any 1-form in $\omega^{1}_{n,R}$ vanishes. 
Since every 2-form in $\omega^{*}_{n,R}$ is a multiple of $d[\epsilon]$, also the product of $[p\epsilon]$ with any element of $\omega^{2}_{n,R}$ vanishes. This finishes the proof of the claim.
\end{proof}

\begin{claim}
	\label{claim:mod-p-iso}
The natural map $W_{n}\Omega^{*}_{\Z_{(p)}} / p \to W_{n}\Omega^{*}_{\Z/(p^{2})}/p$ is an isomorphism.
\end{claim}
\begin{proof}
The kernel of $W_{n}\Omega^{*}_{\Z_{(p)}} \to W_{n}\Omega^{*}_{\Z/(p^{2})}$ is the dg-ideal generated  by the kernel $W_{n}(p^{2}\Z_{(p)})$ of the canonical map  $W_{n}(\Z_{(p)}) \to W_{n}(\Z/(p^{2}))$. It is enough to show that  $W_{n}(p^{2}\Z_{(p)}) \subset pW_{n}(\Z_{(p)})$.
The ideal $W_{n}(p^{2}\Z_{(p)})$ is additively generated by elements of the form $V^{i}([xp^{2}]_{n-i})$.
Set $m = n-i$. In the expression \eqref{eq:coordinates} of $[xp^{2}]_{m}$ the coefficient of $[1]_{m}$ is $xp^{2}$. For $i > 0$, the coefficient of $V^{i}([1]_{m-i})$ is divisible by $p^{-i} (p^{2})^{p^{i-1}} = p^{2p^{i-1} - i}$. Since $2p^{i-1} - i \geq 1$ for all $i \geq 1$, it is divisible by $p$. Thus $[xp^{2}]_{m}$ is a multiple of $p$ and so is any of its Verschiebungen.
\end{proof}

\begin{claim}	
	\label{claim:non-vanishing}
The element $d[1+p]_{2} \in W_{2}\Omega^{1}_{\Z_{(p)}}$ is not divisible by $p$.
\end{claim}
\begin{proof}
Using the expression  \eqref{eq:coordinates} we have
\[
d[1+p]_{2} = p^{-1}( (1+p)^{p} - (1+p) ) dV(1).
\]
But $p^{-1}( (1+p)^{p} - (1+p) ) 
= \sum_{k=1}^{p}  \binom{p}{k}p^{k-1} - 1 \equiv -1 \pmod p$. In view of the second isomorphism in~\eqref{eq:drW-isos-Zp} this finishes the proof.
\end{proof}

We now finish the proof of Proposition~\ref{prop:non-vanishing-in-dRW}.
In order to show that  $d[1+p]_{n}d[1+\epsilon]_{n}$ does not vanish in $W_{n}\Omega^{2}_{B(\F_{p})}/p$, it suffices by Claim~\ref{claim:retract} to show that the image of $d[1+p]_{n}d[1+\epsilon]_{n}$ in $\omega^{2}_{n,\Z/(p^{2})}/p$ does not vanish.
By Claim~\ref{claim:d-epsilon} this image coincides with the image of $d[1+p]_{n}d[\epsilon]_{n}$. By Lemma~\ref{lem:deRham-Witt-dual-numbers} it suffices to show that $d[1+p]_{n}$ does not vanish in $W_{n}\Omega^{1}_{\Z/(p^{2})} / p$; equivalently, $d[1+p]_{n} \not= 0$ in $W_{n}\Omega^{1}_{\Z_{(p)}}/p$ by Claim~\ref{claim:mod-p-iso}. It is clearly enough to show this non-vanishing for $n=2$, which is done in Claim~\ref{claim:non-vanishing}.
\end{proof}

\begin{prop}\label{sec6:nonvanishing}
Let $k$ be a $\F_{p}$-algebra. The canonical map $W_{n}\Omega^{q}_{B(\F_{p})}/p \to W_{n}\Omega^{q}_{B(k)}/p$ is injective for every $n$ and $q$. In particular,
 $d[1+p]_{n}d[1+\epsilon]_{n}$ does not vanish in $W_{n}\Omega^{2}_{B(k)}/p$ for $n\geq 2$.
\end{prop}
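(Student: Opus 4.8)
The \emph{in particular} clause is immediate once the first assertion is established: the unit $\F_{p}\to k$ induces $B(\F_{p})\to B(k)$ and hence $W_{n}\Omega^{2}_{B(\F_{p})}/p\to W_{n}\Omega^{2}_{B(k)}/p$, so by injectivity the nonzero class $d[1+p]_{n}d[1+\epsilon]_{n}$ produced in Proposition~\ref{prop:non-vanishing-in-dRW} goes to a nonzero class. Thus the whole content is the injectivity statement, and we may assume $k\neq 0$.

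The plan is to present the map in question as a base change and to make base change along it explicit. Write $\Z/p^{2}=W_{2}(\F_{p})$; since $B(\F_{p})=\Z/p^{2}[\epsilon]/(p\epsilon)$, tensoring along $\iota\colon \Z/p^{2}\to W_{2}(k)$ yields a canonical isomorphism $B(k)\cong W_{2}(k)\otimes_{\Z/p^{2}}B(\F_{p})$, so the map under study is the base change of $\iota$ along $\Z/p^{2}\to B(\F_{p})$. Two properties of $\iota$ enter: it is faithfully flat, because $\F_{p}\to k$ is and the truncated Witt vector functor $W_{2}$ preserves faithful flatness; and it is split as a map of $\Z/p^{2}$-modules, because $\Z/p^{2}$ is quasi-Frobenius, hence injective over itself, so the inclusion $\Z/p^{2}\hookrightarrow W_{2}(k)$ (injective since $W_{2}$ preserves injections of rings) admits a $\Z/p^{2}$-linear retraction. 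In particular $B(\F_{p})\to B(k)$ is faithfully flat and split over $B(\F_{p})$.

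Next I would compute both sides exactly as in the proof of Proposition~\ref{prop:non-vanishing-in-dRW}, but keeping track of the full groups and with $R=\Z/p^{2}$ replaced throughout by the two $\Z_{(p)}$-algebras $R=\Z/p^{2}$ and $R=W_{2}(k)$. Applying Lemma~\ref{lem:deRham-Witt-dual-numbers} to $R[\epsilon]$ gives the direct-sum decomposition \eqref{eq:deRham-Witt-direct-sum-decomposition} of $W_{n}\Omega^{q}_{R[\epsilon]}$ into finitely many summands $W_{m}\Omega^{*}_{R}$ with $m\le n$, naturally in $R$ and hence compatibly with $\iota$. Passing to the quotient $B(R)=R[\epsilon]/(p\epsilon)$ via \cite[Lemma~2.4]{GeisserHesselholtComplete} and running the analysis of Claims~\ref{claim:d-epsilon}, \ref{claim:vanishing-of-[p]} and \ref{claim:retract} in every cohomological degree — for $R=W_{2}(k)$ the input ``$[p]_{n}$ annihilates $W_{n}\Omega^{\le 1}_{W_{2}(k)}$'' is to be extracted from $[p]_{n}^{2}=[p^{2}]_{n}=0$ (as $p^{2}=0$ in $W_{2}(k)$) together with the structure of $W_{n}\Omega^{*}_{W_{2}(k)}$ — one expresses $W_{n}\Omega^{q}_{B(R)}$ functorially in $R$ in terms of the groups $W_{m}\Omega^{*}_{R}$. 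This reduces the desired injectivity to the statement that $W_{m}\Omega^{*}_{\Z/p^{2}}/p\to W_{m}\Omega^{*}_{W_{2}(k)}/p$ is injective for all $m\le n$.

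I expect this last reduction to be the main obstacle. By Claim~\ref{claim:mod-p-iso} and \cite[Ex.~1.2.4]{HesselholtMadsen-drW} the source is the explicit finite $\F_{p}$-vector space with basis the classes $V^{i}(1)$ ($0\le i<m$) in degree $0$ and $dV^{i}(1)$ ($1\le i<m$) in degree $1$, and it vanishes in degrees $\ge 2$; so degrees $\ge 2$ are trivial. In degree $0$ the injectivity of $W_{m}(\Z/p^{2})/p\to W_{m}(W_{2}(k))/p$ follows from faithful flatness of $\iota$ (preserved by $W_{m}$). The remaining point is to propagate this to degree $1$, which I would do by comparing the canonical ($V$-adic) filtrations on $W_{m}\Omega^{1}_{\Z/p^{2}}$ and $W_{m}\Omega^{1}_{W_{2}(k)}$ and using the fundamental exact sequence \eqref{seq:Illusie-fundamental-sequence} together with the inverse Cartier operator to identify graded pieces; carrying out this bookkeeping carefully, and checking that the relevant ``constant'' pieces inject by faithful flatness, is where the real work lies.
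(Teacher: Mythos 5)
Your argument has a genuine gap at the crucial flatness step. The claim that ``the truncated Witt vector functor $W_{2}$ preserves faithful flatness'' is false. For a reduced $\F_{p}$-algebra $k$ one has $pW_{2}(k)=V(k^{p})$ (since $p\cdot(a_{0},a_{1})=VF(a_{0},a_{1})=(0,a_{0}^{p})$), whereas the $p$-torsion of $W_{2}(k)$ is $V(k)$; hence $\mathrm{Tor}_{1}^{\Z/p^{2}}(W_{2}(k),\F_{p})\cong k/k^{p}$, which is nonzero unless $k$ is perfect. Concretely, $W_{2}(\F_{p}[t])$ is not flat over $\Z/p^{2}$. The same failure recurs when you later invoke ``faithful flatness of $\iota$ (preserved by $W_{m}$)'' to handle degree~$0$, so the base-change strategy breaks down. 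The split injectivity of $\Z/p^{2}\hookrightarrow W_{2}(k)$ as $\Z/p^{2}$-modules is correct but does not transport through $W_{m}$ or $W_{n}\Omega^{*}$, because these functors only see ring maps, and a $\Z/p^{2}$-linear retraction $W_{2}(k)\to\Z/p^{2}$ is not a ring map. A secondary gap, which you yourself flag as ``where the real work lies,'' is that the passage from $W_{n}\Omega^{q}_{R[\epsilon]}$ to $W_{n}\Omega^{q}_{B(R)}$ is not an explicit direct-sum decomposition in the paper: Claim~\ref{claim:retract} produces only a factorization of a map into a quotient $\omega^{2}_{n,R}$ and relies essentially on the vanishing $W_{n}\Omega^{2}_{R}=0$ for $R$ a quotient of $\Z_{(p)}$. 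For $R=W_{2}(k)$ this vanishing fails, so the reduction to $W_{m}\Omega^{*}_{\Z/p^{2}}/p\to W_{m}\Omega^{*}_{W_{2}(k)}/p$ is not available.

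The paper's proof takes an entirely different route that avoids these issues. It regards $F(k)=W_{n}\Omega^{q}_{B(k)}/p$ as a functor on $\F_{p}$-algebras commuting with filtered colimits, and establishes injectivity of $F(k)\to F(\ell)$ only for \'etale coverings $k\to\ell$, by combining the fact that $W_{n}$ carries \'etale coverings to \'etale coverings (van der Kallen, Borger) with Hesselholt's \'etale base change formula for the de Rham--Witt complex. Passing to a filtered colimit over finite separable extensions gives injectivity of $F(\F_{p})\to F(\bar\F_{p})$; then, for an $\bar\F_{p}$-algebra $k$, one writes $k$ as a filtered colimit of finitely generated $\bar\F_{p}$-algebras $A_{i}$, each of which admits a section $A_{i}\to\bar\F_{p}$ by the Nullstellensatz, so $F(\bar\F_{p})\to F(A_{i})$ is split injective and one concludes. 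The mechanism that makes injectivity work is thus faithful flatness of \'etale maps after applying $W_{n}$, not flatness of $W_{2}(\F_{p})\to W_{2}(k)$. You would need to replace your flatness input with the \'etale base change machinery (or find an alternative) to repair the argument.
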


\begin{proof}
The second part of Proposition~\ref{sec6:nonvanishing} follows from the first part and
Proposition~\ref{prop:non-vanishing-in-dRW}.

Write $F$ for the functor $k \mapsto W_{n}\Omega^{q}_{B(k)}/p$ from rings over $\mathbb F_p$ to abelian groups.
This functor commutes with filtered colimits.
Recall from \cite[Thm.~2.4]{vdKallen} or \cite[Prop.~6.9, Thm.~9.2]{Borger} that if $R \to S$ is an \'etale covering, i.e.~\'etale and faithfully flat, then so is $W_{n}(R) \to W_{n}(S)$. 
Assume that  $k \to \ell$ is an \'etale covering. Then  $B(k) \to B(\ell)$ is also an \'etale covering by base change.
It follows from~\cite[Thm.~C]{Hesselholt-big} that the canonical map 
\[
W_{n}(B(\ell)) \otimes_{W_{n}(B(k))} W_{n}\Omega^{q}_{B(k)}/p   \to   W_{n}\Omega^{q}_{B(\ell)}/p
\]
is an isomorphism. Using the result of van der Kallen and Borger for the map $B(k) \to B(\ell)$, we get that  $W_{n}(B(k)) \to W_{n}(B(\ell))$ is an \'etale covering.
Together these results imply  that $F(k) \to F(\ell)$ is injective.
In particular, $F(k) \to F(\ell)$ is injective for a finite separable field extension $k \to \ell$. Since $F$ preserves filtered colimits,  the map $F(\F_{p}) \to F(\bar \F_{p})$ is injective, where $\bar\F_{p}$ is an algebraic closure of $\F_{p}$.

To prove injectivity for a general  $\F_{p}$-algebra $k$, it is now enough to show that $F(\bar \F_{p}) \to F( k \otimes_{\F_{p}} \bar\F_{p})$ is injective. 
We can thus assume that $k$ is an $\bar\F_{p}$-algebra. Write $k$ as a filtered colimit of finitely generated $\bar\F_{p}$-algebras $A_{i}$. As $\bar \F_{p}$ is algebraically closed, each map $\bar\F_{p} \to A_{i}$ has a section, and hence $F(\bar\F_{p}) \to F(A_{i})$ is injective. As a filtered colimit of injective maps, $F(\bar\F_{p}) \to F(k)$ is then also injective.
\end{proof}

\subsection{The mixed characteristic result}

The next proposition is a special case of~\cite[Prop.~B.1.1.]{GeisserHesselholtCycles}.

\begin{prop}\label{prop:dlog}
Let $p\neq 2$ be a prime and $A$ a $\Z_{(p)}$-algebra.
The natural map
\[
 \begin{array}{rcl}
 \dlog[-]_n\colon A^\times &\to     & W_n\Omega^1_A  \vspace{1ex}   \\
                 x        &\mapsto & [x]_n^{-1} d[x]_n
 \end{array}
\]
satisfies the Steinberg relation $\dlog[x]_n \dlog[1-x]_n=0$ and hence induces a natural map
\[
 \dlog[-]_n\colon K_2^M(A) \to W_n\Omega^2_A
\]
where the Milnor $K$-group $K_2^M(A)$ is defined as $A^\times\otimes A^\times$ modulo the subgroup generated by $a\otimes(1-a)$ for units $a$ and $1-a$. 
\end{prop}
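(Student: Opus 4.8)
The plan is to reduce the proposition to the Steinberg relation by a purely formal argument and then to verify that relation by a computation in the de Rham--Witt complex. For the formal part: $\dlog[-]_n\colon A^\times\to W_n\Omega^1_A$ is a homomorphism from the multiplicative group $A^\times$ to the additive group $W_n\Omega^1_A$, because the Teichm\"uller map is multiplicative ($[xy]_n=[x]_n[y]_n$) and, by the Leibniz rule, $\dlog[xy]_n=[xy]_n^{-1}d[xy]_n=[x]_n^{-1}d[x]_n+[y]_n^{-1}d[y]_n=\dlog[x]_n+\dlog[y]_n$. Hence the pairing $A^\times\times A^\times\to W_n\Omega^2_A$, $(x,y)\mapsto\dlog[x]_n\dlog[y]_n$, is biadditive; it therefore factors through $A^\times\otimes_{\Z}A^\times$, and by the definition of $K^M_2(A)$ it descends to $K^M_2(A)$ as soon as $\dlog[x]_n\dlog[1-x]_n=0$ whenever $x,1-x\in A^\times$. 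Since $[x(1-x)]_n=[x]_n[1-x]_n$ is then a unit in $W_n(A)$, this is equivalent to the identity
\[
d[x]_n\cdot d[1-x]_n=0\quad\text{in }W_n\Omega^2_A.
\]

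To prove this identity I would reduce, by naturality of $W_n\Omega^*$ in the $\Z_{(p)}$-algebra $A$, to the universal case $A=\Z_{(p)}[t,t^{-1},(1-t)^{-1}]$ with $x=t$, and compute inside the de Rham--Witt complex of $\Z_{(p)}[t]$ --- described explicitly by Hesselholt--Madsen \cite[Thm.~4.2.8]{HesselholtMadsen-drW}, the input underlying Lemma~\ref{lem:deRham-Witt-dual-numbers} --- localized at $[t]_n$ and $[1-t]_n$. The computation proceeds by induction on $n$: for $n=1$ one has $d[t]_1\cdot d[1-t]_1=dt\wedge d(1-t)=-\,dt\wedge dt=0$ in $\Omega^2_A$. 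For the inductive step, since the $0$-th ghost component of $[t]_n+[1-t]_n-1$ vanishes we may write $[t]_n+[1-t]_n-1=V(y)$ with $y\in W_{n-1}(\Z_{(p)}[t])$; applying $d$, multiplying by $d[t]_n$, and using $d[t]_n\cdot d[t]_n=0$ (valid since $p\neq2$ makes $2$ invertible) together with the de Rham--Witt relations $FV=p$, $FdV=d$, $Fd[t]_n=[t]_{n-1}^{p-1}d[t]_{n-1}$ and the projection formula $V(a)\omega=V(a\cdot F\omega)$, one rewrites $d[t]_n\cdot d[1-t]_n=-\,dV(\omega)$ with $\omega=y\,[t]_{n-1}^{p-1}d[t]_{n-1}\in W_{n-1}\Omega^1_{\Z_{(p)}[t]}$, whence $F(d[t]_n\cdot d[1-t]_n)=-d\omega$. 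Since $FV=p$ forces $p\,y=[t]_{n-1}^{p}+[1-t]_{n-1}^{p}-1$, a short Leibniz computation (again using $d[t]_{n-1}\cdot d[t]_{n-1}=0$) shows $p\,d\omega$ is a $W_{n-1}(\Z_{(p)}[t])$-multiple of $d[t]_{n-1}\cdot d[1-t]_{n-1}$, which is $0$ by induction. Thus $d[t]_n\cdot d[1-t]_n$ lies in $\ker\!\bigl(R\colon W_n\Omega^2\to W_{n-1}\Omega^2\bigr)$ (again by induction) and is killed by $p$ after applying $F$; a final analysis of the restriction kernel $\Fil^{n-1}W_n\Omega^2$ --- equivalently, of the explicit presentation above --- then shows it vanishes. (Complementarily: under the ghost map $p$ becomes invertible and $W_n\Omega^2_A\otimes\Q\cong\prod_{i=0}^{n-1}\Omega^2_{A\otimes\Q}$, where $d[t]_n\cdot d[1-t]_n$ maps to $0$, each factor being a multiple of $dt\wedge dt$; so the class is $p$-power torsion from the outset.)

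The main obstacle is precisely this $p$-torsion over $\Z_{(p)}$: already $W_n\Omega^1_{\Z_{(p)}}$ is pure $p$-torsion (see Claim~\ref{claim:vanishing-of-[p]} and the isomorphisms displayed there), so the Steinberg relation, which is the triviality $dt\wedge dt=0$ rationally and modulo torsion, must be lifted integrally through all Witt levels while carefully tracking the torsion contributions. Carrying out this bookkeeping is exactly the content of \cite[Prop.~B.1.1]{GeisserHesselholtCycles}, of which the proposition above is a special case.
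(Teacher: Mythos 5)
Your proposal lands in the same place as the paper: the authors' entire proof is the one-sentence citation of \cite[Prop.~B.1.1]{GeisserHesselholtCycles}, and you also ultimately defer to that same reference; your preliminary reduction (multiplicativity of $[-]_n$ and the Leibniz rule give biadditivity, hence factoring through $A^\times\otimes A^\times$; the Steinberg relation is equivalent to $d[x]_n\,d[1-x]_n=0$ since the $[x]_n^{-1}[1-x]_n^{-1}$ prefactor is a unit; reduction to the universal algebra; the $n=1$ case) is correct and is a fair sketch of what the cited result involves. One remark on the inductive step you sketch: since $F$ and $d$ are additive and $FdV=d$, applying $F$ to $d([t]_n+[1-t]_n-1)=dV(y)$ gives directly $dy=[t]_{n-1}^{p-1}d[t]_{n-1}+[1-t]_{n-1}^{p-1}d[1-t]_{n-1}$, so $d\omega$ itself (not merely $p\,d\omega$, as you state) is a $W_{n-1}$-multiple of $d[t]_{n-1}d[1-t]_{n-1}$ and hence vanishes by induction; but even with this sharpening, $R\xi=0$ and $F\xi=0$ do not by themselves force $\xi=0$, so the ``final analysis of the restriction kernel'' you acknowledge as missing is a genuine gap --- it is precisely the nontrivial content that the Geisser--Hesselholt reference supplies and that the paper quotes without reproving.
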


The following result is Theorem~\ref{introthm:mixed-char} from the introduction.

\begin{thm}\label{thm:charmixed}
Let $A$ be an excellent noetherian ring with $\dim(A)\leq 1$ such that $A/\fm$ is perfect of characteristic $p> 2$ for every maximal ideal $\fm \subset A$.
If $A$ is $K_{2}$-regular, then $A$ is regular.
\end{thm}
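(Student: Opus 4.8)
The plan is to imitate the proof of Theorem~\ref{thm:main}, replacing the positive–characteristic vanishing of $KH$ by a computation of $KH_2(A;\Z/p)$ through the normalization, fed by the mixed–characteristic de Rham–Witt calculation of Proposition~\ref{sec6:nonvanishing}. First I would carry out the standard reductions. By \cite[Cor.~1.9]{Vorst} the hypothesis localizes, so one may assume $A$ local; the conditions $\dim A\le 1$ and on residue fields of closed points persist. If $\dim A=0$, then $K_2$-regularity forces $K_1$-regularity \cite[Cor.~2.1]{Vorst}, hence $A$ is reduced, hence a field. If $\dim A=1$, then $A$ is reduced, and completing at $\fm$ — via the excision square of diagram~\eqref{thm.comdiag}, noting that $\Spec A\setminus\{\fm\}$ and $\Spec\widehat A\setminus\{\fm\}$ are $0$-dimensional and reduced (using excellence so that $\widehat A$ is reduced), hence regular — reduces us to $\widehat A$, which is again $K_2$-regular. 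So assume $A$ is complete local, reduced, of dimension $1$, with $k:=A/\fm$ perfect of characteristic $p>2$; set $r=\dim_k\fm/\fm^2$ and suppose $r\ge 2$ (the negation of regularity). If $A$ contains $\F_p$ it is a complete local $k$-algebra with $\pdim A=\pdim k+1=1$ by Lemma~\ref{lem:p-dim-and-dimension}, and all residue fields of $\Spec A$ have finite $p$-rank, so Theorem~\ref{thm:main} applies and gives a contradiction; hence $A$ has characteristic $0$.

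Next I would produce a nonzero symbol. Since $k$ is perfect there is a canonical local map $W(k)\to A$, inducing $W_2(k)\to A/\fm^2$, which is injective unless $p\in\fm^2$; using $(\fm/\fm^2)^2=0$ and $r\ge 2$ one builds a ring surjection $A\twoheadrightarrow A/\fm^2\twoheadrightarrow R$ where $R=k[X_1,X_2]/(X_1,X_2)^2$ if $p\in\fm^2$, and $R=B(k)=W_2(k)[\epsilon]/(p\epsilon)$ if $p\notin\fm^2$ (in the latter case $X_1\mapsto p$, $X_2\mapsto\epsilon$). Lifting the two generators to $x_1,x_2\in\fm$ (with $x_1=p$ in the second case) one gets units $1+x_i$, and I set $\xi=\{1+x_1,1+x_2\}\in K_2(A)$. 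Mapping to $K_2(R)$ and composing the cyclotomic trace with $\dlog[-]_2$ (Proposition~\ref{prop:dlog}; both $A$ and $R$ are $\Z_{(p)}$-algebras as $p\ne 2$), the image of $\xi$ in $W_2\Omega^2_R/p$ is $d[1+x_1]_2\,d[1+x_2]_2$ modulo terms that vanish mod $p$ — the Teichmüller units being $\equiv 1$, and the corrections either divisible by $p$ or, being multiples of $d[\epsilon]_2$, annihilated by $\epsilon$ via Lemma~\ref{lem:deRham-Witt-dual-numbers}. This is nonzero by Proposition~\ref{sec6:nonvanishing} (resp.\ Lemma~\ref{lem:non-vanishing} with $s=0$), so $\xi\neq0$ in $K_2(A;\Z/p)$.

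Finally I would derive the contradiction. As $A$ is $K_2$-regular, $K_i(A)\to KH_i(A)$ is an isomorphism for $i\le2$ (discussion before Theorem~\ref{thm:main}), whence $K_2(A;\Z/p)\cong KH_2(A;\Z/p)$. Let $\widetilde A$ be the normalization, a finite product of complete discrete valuation rings $\mathcal{O}_j$ with perfect residue fields $\kappa_j$ and fraction fields $F_j$, with conductor $\mathfrak c$ cutting out the $\fm$-primary non-normal locus; the normalization square is an abstract blow-up square with $0$-dimensional closed parts $\Spec A/\mathfrak c$, $\Spec\widetilde A/\mathfrak c$. Using cdh descent and nil-invariance of $KH$, together with $K_*(\kappa;\Z/p)=0$ for $*\ge1$ and $\kappa$ perfect of characteristic $p$ (Corollary~\ref{cor:vanishing-on-valuation-rings}), the Mayer–Vietoris sequence collapses to an isomorphism $KH_2(A;\Z/p)\cong\prod_jK_2(\mathcal{O}_j;\Z/p)\cong\prod_jK_2(F_j;\Z/p)$, the last step from the localization sequence and $K_2(\kappa_j;\Z/p)=K_3(\kappa_j;\Z/p)=0$. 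Under this identification $\xi$ maps to the tuple of symbols $\{1+x_1,1+x_2\}$. If $F_j$ has characteristic $p$, it is the function field of a $1$-dimensional complete local domain with perfect residue field, so $\pdim F_j=1$ and $K_2(F_j;\Z/p)=0$ by Corollary~\ref{cor:vanishing-on-valuation-rings}. If $F_j$ has characteristic $0$, the symbol lies deep in the unit filtration of $\mathcal{O}_j$: with $e_j=v_{\mathcal{O}_j}(p)$ one has $v(x_1)+v(x_2)\ge e_j+1$ (when $x_1=p$), which beats the wild-symbol bound $2e_j/(p-1)$ precisely because $p>2$, so $\{1+x_1,1+x_2\}$ is $p$-divisible in $K_2(F_j)$. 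Hence $\xi=0$ in $KH_2(A;\Z/p)\cong K_2(A;\Z/p)$, contradicting the previous paragraph; therefore $r=1$ and $A$ is regular.

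The hard part will be this last step: identifying $KH_2(A;\Z/p)$ through the normalization and checking that the explicitly constructed symbol $\xi$ nevertheless dies there. The equicharacteristic branches are controlled by Corollary~\ref{cor:vanishing-on-valuation-rings}, but the characteristic-$0$ branches require the arithmetic fact that symbols $\{1+p,1+x\}$ vanish modulo $p$ — which is exactly where the hypothesis $p>2$ enters — and one must choose the lifts $x_1,x_2$ so that the filtration estimate holds in every branch, in particular in the delicate case $p\in\fm^2$ where $x_1=p$ is no longer available.
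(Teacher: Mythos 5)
Your plan follows the paper's high--level strategy (reduce to complete local, produce a symbol that is non-zero in $K_2(A)/p$ via $\dlog$ into de Rham--Witt, contradict this with a vanishing coming from the normalization), but it departs in the vanishing step and, in doing so, runs into two genuine gaps.

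First, you skip the strict henselization step. The paper replaces $A$ by $A^{\sh}$ (using van der Kallen's \'etale invariance of $NK_n$ to keep $K_2$-regularity, and the faithfully flat descent of regularity) precisely because the rest of the argument needs the residue field to be \emph{algebraically closed}, not merely perfect. This matters twice. (a) To transport a symbol from $K_2(B(k))$ to $W_n\Omega^2_{B(k)}/p$ the paper uses van der Kallen's isomorphism $K_2^M(B(k))\xrightarrow{\sim}K_2(B(k))$, which requires the local ring $B(k)$ to have \emph{infinite} residue field; your ``composing the cyclotomic trace with $\dlog$'' is not an available map here, since $B(k)$ is far from smooth and the identification $\TR_*\cong W_\bullet\Omega^*$ does not apply. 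With $k$ perfect but possibly finite (e.g.\ $k=\F_p$), van der Kallen's theorem is not applicable and your non-vanishing step is unsupported. (b) The paper's vanishing $K_2(A)/p=0$ reduces, via the conductor square, the localization sequence of the DVR $\widetilde A$, and the vanishing of $K_*(\kappa;\Z/p)$ for $\kappa$ perfect, to $K_2(F)/p=0$ for $F=\Frac(\widetilde A)$; this last vanishing is obtained from Merkurjev--Suslin together with $\operatorname{cd}(F)\le1$, which in turn relies on Lang's theorem and thus on $\kappa$ being algebraically closed. Without the henselization step you cannot invoke Lang.

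Second, the alternative you propose for the vanishing — tracking the explicit symbol $\{1+x_1,1+x_2\}$ through the normalization and killing it in each characteristic-zero branch by a ``wild-symbol bound $2e/(p-1)$'' — is not a step you can make precise. There is no standard bound of the form you state on $v(x_1)+v(x_2)$ guaranteeing $p$-divisibility of a $K_2$-symbol, and more importantly the whole group $K_2(F_j)/p$ need not vanish once the residue field is merely perfect (e.g.\ $F_j=\Q_p(\zeta_p)$ has non-zero $K_2/p$), so a branch-wise estimate would have to be genuinely sharp. You yourself flag that when $p\in\fm^2$ the lift $x_1=p$ is unavailable, and indeed in that case the resulting valuations can fall short of any useful threshold (for $p=3$, $e_j\ge 2a_j$ with $v_j(x_i)\ge a_j$ gives $v_j(x_1)+v_j(x_2)\ge 2a_j$, which does not exceed $2e_j/(p-1)=e_j\ge 2a_j$). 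The paper avoids this delicacy altogether by proving the \emph{group} $K_2(A)/p$ is zero, so that no estimate on the specific symbol is required.

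Your idea of deferring the equal-characteristic case (``$A$ contains $\F_p$'') to Theorem~\ref{thm:main} is a clean shortcut that the paper does not take explicitly, and the reduction that $\widetilde A/\mathfrak c$ has perfect residue fields so $KH_*$ of the closed parts vanishes is fine. But the two points above — missing the strict henselization and the unjustified filtration bound — are where the argument as written would fail; the paper's $K_2(A)/p=0$ via Merkurjev--Suslin and Lang's theorem (after strict henselizing) is the essential ingredient you would need to restore.
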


\begin{proof}
Let $A$ be a ring satisfying the assumptions of the theorem.
We must show that $A$ is regular.
As in the proof of Theorem~\ref{thm:main} we can assume without loss of generality that $A$ is local with maximal ideal $\fm$.

Once we prove that the strict henselization $A^\sh$ is regular, we can deduce the regularity of $A$, see \cite[\href{https://stacks.math.columbia.edu/tag/06LN}{Lemma 06LN}]{stacks-project}.
As the canonical map $A\to A^\sh$ is a filtered colimit of \'etale morphisms by \cite[\href{https://stacks.math.columbia.edu/tag/04GN}{Lemma 04GN}]{stacks-project}, van der Kallen's result on the $NK_n$-groups \cite[Thm.~3.2]{vdKallen} implies that $A^\sh$ is still $K_2$-regular.
Moreover, $A^\sh$ is still excellent by \cite[Cor.~5.6]{Greco}.
Hence we can assume without loss of generality that $A$ is strict henselian and excellent.

Arguing exactly as in the proof of Theorem~\ref{thm:main}, we reduce to the case of $A$ being a complete local ring with $\dim(A)\leq 1$ and maximal ideal $\fm$, algebraically closed residue field $k=A/\fm$ of characteristic $p> 2$ and quotient field $F$ of characteristic zero.

\begin{claim}\label{sec6.claimzero}
$K_2(A)/p =0. $
\end{claim}
\begin{proof}[Proof of Claim~\ref{sec6.claimzero}]
By assumption, $A$ is a complete noetherian local ring.
The $K_1$-regularity implies that $A$ is also reduced.
Thus the normalization $A\to\tilde A$ is finite (see e.g.~\cite[\href{https://stacks.math.columbia.edu/tag/032Y}{Lemma 032Y}]{stacks-project}) and we obtain an abstract blow-up square
\[
\begin{tikzcd}
\Spec(\tilde A) \arrow[d] & Y'    \arrow[d]\arrow[l]          \\
\Spec(A)           & \Spec(k)\arrow[l]
\end{tikzcd}
\]
with $Y'_\red=\Spec(k)\times\ldots\times \Spec(k)$ as $k$ is algebraically closed.
Note that $\tilde A$ is regular.
Descent for $KH(-;\Z/p)$, see \cite[Thm.~3.9]{Cisi13}, yields an  exact sequence
\[
\ldots\to KH_3(Y';\Z/p) \to KH_2(A;\Z/p) \to  KH_2(\tilde A;\Z/p) \oplus KH_2(k;\Z/p) \to\ldots
\]
The group $KH_3(Y';\Z/p)= KH_3(Y'_\red;\Z/p)= K_3(Y';\Z/p)$ and the group $KH_2(k;\Z/p)= K_2(k;\Z/p)$ vanish by Corollary~\ref{cor:vanishing-on-valuation-rings}.

We have an injection $K_2(A)/p = KH_2(A)/p \hookrightarrow KH_2(A;\Z/p)$ where the first isomorphism uses the $K_2$-regularity of $A$, see \cite[Cor.~IV.12.3.2]{Kbook}.
Hence, the above exact sequence implies that the composite map
\[
 K_2(A)/p \hookrightarrow KH_2(A;\Z/p) \hookrightarrow KH_2(\tilde A;\Z/p) = K_2(\tilde A)/p
\]
is injective and it suffices to show that $K_2(\tilde A)/p=0$.

Consider the diagram
\[
\begin{tikzcd}
&K_2(\tilde A)/p \arrow[d, hook] \arrow[r]&  K_2(F)/p   \arrow[d, hook]         \\
K_3(k;\Z/p)\arrow[r] &K_2(\tilde A;\Z/p)\arrow[r]           & K_2( F;\Z/p)
\end{tikzcd}
\]
where the bottom horizontal line is part of the exact localization sequence of the
discrete valuation ring $\tilde A$.
As $K_3(k;\Z/p)=0$ by Corollary~\ref{cor:vanishing-on-valuation-rings}, we deduce
that the top horizontal map is injective. So to finish the proof of the claim it suffices to show that $K_2(F)/p= 0$.

By the Merkurjev--Suslin theorem \cite{MerkurjevSuslin}, the norm residue homomorphism
\[
K_2(F)/p \xrightarrow{\cong} H^2(F,\mu_p^{\otimes 2}) 
\]
is an isomorphism.
The group on the right-hand side vanishes as the field $F$ has property ($C_1$) by
\cite[II.3.3.(c)]{SerreGalois} invoking Lang's theorem and hence has cohomological dimension $\leq 1$ by \cite[II.3.2,~Cor.~to~Prop.~8]{SerreGalois}. 
Hence we get $K_2(A)/p=0$ as desired.
This shows the claim.
\end{proof}

Consider a minimal set of generators $x_1,\ldots,x_r$ for the maximal ideal $\fm$ of $A$.
The ring $A$ is regular if and only if $r=1$.
We consider two cases.

Suppose first that $p\in \fm^2$ and consider the $\F_p$-algebra $\bar A:=A/p$.
Then the images $\bar x_1,\ldots,\bar x_r$ are still a minimal set of generators for the maximal ideal $\bar\fm$ of $\bar A$.
Set $\bar B:= \bar A /\bar \fm^2$.
Arguing analogously as in the proof of Theorem~\ref{thm:main}, the image of the symbol $\{1+x_1,\ldots,1+x_r\}\in K_{i}(\bar B)$ in $K_{i}(\bar B)/p$ is non-zero and hence $K_{i}(A)/p\neq 0$ for all $i\in\{1,\ldots,r\}$.
This implies $r=1$ by Claim~\ref{sec6.claimzero} and $A$ is regular.

For the second case suppose that $p\notin \fm^2$. Then we can assume without loss of
generality  that $x_1=p$.
We argue by contradiction and suppose that $r\geq 2$. 
By Cohen's theorem~\cite[Thm.~28.3]{MatsumuraCA} the complete local ring $A$ has a coefficient ring $W(k)$.
Hence, the canonical map $X_i\mapsto x_i$ induces an isomorphism $W(k)[X_2,\ldots, X_r]/(p,X_2,\ldots, X_r)^2\cong A/\fm^2$.
This ring canonically surjects onto the ring $W_2(k)[X]/(p,X)^2\cong
W_{2}(k)[\epsilon]/(p\epsilon)$ which we denote by $B(k)$.
Analogously to the first case, it suffices to show that the image of the symbol $\{1+p, 1+\epsilon\}$ in $K_2(B(k))/p$ does not vanish.
By \cite{vanderKallen} and as the residue field $k$ is infinite, the canonical map
$K^M_2(B(k))\to K_2(B(k))$ is an isomorphism.
Choose some integer $n\geq 2$.
The $\dlog$-map from Proposition~\ref{prop:dlog} sends the symbol $\{1+p, 1+\epsilon\}$ to the element $[1+p]_{n}^{-1}d[1+p]_{n}[1+\epsilon]_{n}^{-1}d[1+\epsilon]_{n}$ in $W_n\Omega^2_{B(k)}/p$ which does not vanish by Proposition~\ref{sec6:nonvanishing}.
Hence $K_2(A)/p \neq 0$ which contradicts Claim~\ref{sec6.claimzero}.
This implies that $A$ is regular.
\end{proof}

\appendix

\section{The Cartier isomorphism for valuation rings \\  \normalfont (after  Ofer Gabber)}

In this appendix we present a detailed account of results of Gabber about valuations rings
in positive characteristic. In particular we construct the Cartier isomorphism for these
rings. The exposition is based on \cite{Gabber_letter} and~\cite{GabberRamero}.

\subsection{Elementary extensions}\label{subsec:elem}

Let $V\subset W$ be an extension of integral domains of characteristic $p$ such that $W^p\subset V$.
For such an extension we get a $V$-linear ``inverse Cartier'' operator
\begin{align}\label{eq.invcart}
\gamma_{W/V}: V\otimes_W \Omega_{W/V}^i &\xrightarrow{} Z\Omega^i_{W/V}/B\Omega^i_{W/V} \\ \notag
v\otimes b dy_1\wedge \ldots \wedge dy_i &\mapsto v b^p y_1^{p-1}dy_1\wedge \ldots  \wedge y_i^{p-1}dy_i,
\end{align}
see \cite[Sec.~7]{Katz-nilpotent}. Here $V$ becomes a $W$-module via the Frobenius map.
In case $V=W^p$ this map can be identified with the standard $W$-linear map on absolute forms,
discussed in Section~\ref{sec:deriform},
\[
\gamma_W:\Omega_{W}^i \to Z \Omega^i_{W}/B\Omega^i_{W},
\]
where the Frobenius induces the  $W$-module structure on the right.

We say that
the  extension $V\subset W$ is {\em elementary} if there exists a finite $p$-basis $x_1,\ldots ,
x_r\in W$ of $W/V$, see Definition~\ref{sec2.defpbase}. Note that $x_1,\ldots ,
x_r\in W$ form a $p$-basis of $W/V$ if $W=V[x_1, \ldots , x_r]$ and if these elements form
a $p$-basis of the extension of quotient fields $Q(W)/Q(V)$.
 An elementary extension is a flat local complete
intersection homomorphism of rings, since we have the presentation
\begin{equation}\label{app.eq1}
  W\cong V[X_1,\ldots , X_r]/(X_1^p-x_1^p,
  \ldots , X_r^p-x_r^p).
\end{equation}
   This presentation also implies that $\Omega_{W/V}$ is a free
$W$-module with basis $dx_1, \ldots , dx_r$.

By $\bbL_{W/V}$ we denote the cotangent complex of the ring extension $V\subset W$
and by $\bbL_V$ we denote the cotangent complex of $V$ over $\mathbb F_p$, see Section~\ref{sec:deriform}. 
\begin{prop}\phantomsection\label{app.propdiff} 
  \begin{itemize}
    \item[(i)]
  If the extension $V\subset W$ is elementary, then $\bbL_{W/V}$ is concentrated in 
  degrees $0$ and $1$, and $ H_i(\bbL_{W/V})$ is a flat $W$-module for $i\in \{0,1\}$.
\item[(ii)] If $V\subset V^{1/p}$ is a filtered colimit of elementary extensions $V\subset
  W$, then   $\bbL_{V}\simeq
  \Omega_{V}[0]$ and $\Omega_V$ is a flat $V$-module.
  \end{itemize}  
\end{prop}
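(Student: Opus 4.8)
The plan is to obtain (i) from the cotangent complex of a complete intersection, and (ii) by ``dividing the cotangent complex of $V$ by the Frobenius''. For (i), I would use the presentation \eqref{app.eq1} to write $W=R/I$ with $R=V[X_1,\dots,X_r]$ and $I=(X_1^p-x_1^p,\dots,X_r^p-x_r^p)$; the listed generators form a regular sequence in $R$ (each is monic in its variable, hence a nonzerodivisor modulo the previous ones), so $\bbL_{W/R}\simeq (I/I^2)[1]$ with $I/I^2$ free of rank $r$ over $W$. The transitivity triangle for $V\to R\to W$ then reads
\[
\Omega_{R/V}\otimes_R W\longrightarrow \bbL_{W/V}\longrightarrow (I/I^2)[1],
\]
and its boundary is the shift of the conormal map $[f_j]\mapsto df_j=d(X_j^p)-d(x_j^p)$, which vanishes since $p=0$ in $W$ and $x_j^p\in V$. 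Hence the triangle splits and $\bbL_{W/V}\simeq \Omega_{W/V}[0]\oplus (I/I^2)[1]$ with both summands free of rank $r$; in particular (i) holds.

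For (ii), write $V^{1/p}=\colim_\alpha W_\alpha$ with the $W_\alpha$ elementary over $V$. Since $\bbL$ commutes with filtered colimits and each $\bbL_{W_\alpha/V}$ is, by (i), a sum of free modules placed in degrees $0$ and $1$, the complex $\bbL_{V^{1/p}/V}$ is concentrated in degrees $0,1$ with homology a filtered colimit of free modules, hence flat over $V^{1/p}$ by Lazard. Now transport along the ring isomorphism $\psi\colon V^{1/p}\xrightarrow{\ \sim\ }V$, $x\mapsto x^p$, which carries the extension $V\subset V^{1/p}$ to $V^p\subset V$: thus $\bbL_{V/V^p}$ is concentrated in degrees $0,1$ with flat homology over $V$. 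Moreover, by \eqref{app.eq1} each $W_\alpha$ is \emph{free} over $V$, so $V^{1/p}$ is faithfully flat over $V$, and therefore the Frobenius $F\colon V\to V$ (which factors as $V\hookrightarrow V^{1/p}\xrightarrow{\psi}V$) is a faithfully flat endomorphism.

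The decisive step is the transitivity triangle for $\F_p\subset V^p\subset V$. Using the Frobenius isomorphism $V\xrightarrow{\sim}V^p$ to rewrite $\bbL_{V^p/\F_p}\otimes_{V^p}V$ as $F^\ast\bbL_V$, it becomes
\[
F^\ast\bbL_V\xrightarrow{g}\bbL_V\longrightarrow \bbL_{V/V^p},
\]
with $g$ the canonical map attached to $\F_p\to V\xrightarrow{F}V$. The crux is the null-homotopy $g\simeq 0$: by left Kan extension it suffices to check this when $V$ is a polynomial $\F_p$-algebra (on which $F$ is flat, so no derived issues arise), and there $g$ is literally the zero map, $a\otimes dx_i\mapsto a\,d(x_i^p)=0$. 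Granting this, $\bbL_{V/V^p}\simeq \bbL_V\oplus F^\ast\bbL_V[1]$; since the left side lies in degrees $0,1$, comparing homology in degrees $\geq 2$ gives $H_i(\bbL_V)=0$ for $i\geq 2$ and $H_1(F^\ast\bbL_V)=0$, and faithful flatness of $F$ turns the latter into $H_1(\bbL_V)=0$. Hence $\bbL_V\simeq\Omega_V[0]$, and finally $\Omega_V=H_0(\bbL_{V/V^p})$ is flat over $V$ by the previous paragraph. Apart from the vanishing of $g$ — which encodes that the differential of Frobenius dies on the cotangent complex — the argument is bookkeeping with transitivity triangles and the faithful flatness of $F$.
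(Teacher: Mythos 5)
Your proof is correct, but for the crucial part of (ii) — the equivalence $\bbL_V\simeq\Omega_V[0]$ — it takes a genuinely different route from the paper. The paper passes "upward" along the tower $V\subset V^{1/p}\subset V^{1/p^2}\subset\cdots$, shows inductively that each $\bbL_{V^{1/p^n}/V}$ is concentrated in degrees $0,1$, hence so is $\bbL_{V^{1/p^\infty}/V}$, and then exploits the fact that $\bbL_{V^{1/p^\infty}}$ is concentrated in degree $0$ (perfectness) together with faithful flatness of $V\to V^{1/p^\infty}$ in the transitivity triangle for $\F_p\to V\to V^{1/p^\infty}$. You instead stay with the single extension $V^p\subset V$ and split the transitivity triangle $F^*\bbL_V\xrightarrow{g}\bbL_V\to\bbL_{V/V^p}$ by observing that $g$, the differential of Frobenius, is null-homotopic. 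Your argument for $g\simeq 0$ is sound: computing with a simplicial polynomial resolution $P_\bullet\to V$ and lifting $F_V$ to $F_{P_\bullet}$ makes the map literally zero in each simplicial degree, and since each $\Omega_{P_n}\otimes_{P_n}V$ is $V$-free the levelwise tensor computes the derived one. (The phrase "by left Kan extension" compresses this; what is really used is the levelwise resolution computation, since one should not take for granted that $A\mapsto F^*\bbL_A$ with a derived tensor is a left Kan extension of the polynomial-algebra functor for general $A$.) The tradeoff: the paper's route needs the input that perfect $\F_p$-algebras have $\bbL$ concentrated in degree $0$, whereas yours needs the vanishing of the Frobenius differential on the cotangent complex — two closely related facts used differently. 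For part (i), you give an explicit splitting argument (the conormal map kills $X_j^p-x_j^p$ since $p=0$ and $x_j^p\in V$) where the paper simply cites Illusie's theorem that flat l.c.i.\ maps have $\bbL$ concentrated in degrees $0,1$ with flat homology; your version is more self-contained and actually gives the finer statement $\bbL_{W/V}\simeq\Omega_{W/V}\oplus(I/I^2)[1]$ with both summands free. One small point you gloss over: you deduce faithful flatness of $V\to V^{1/p}$ from "each $W_\alpha$ is free"; filtered colimits of faithfully flat algebras need not be faithfully flat in general, but here the extension is a flat local homomorphism of local (valuation) rings, hence automatically faithfully flat — worth saying.
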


\begin{proof}
Part~(i) is clear from the presentation~\eqref{app.eq1} and \cite[Cor.~III.3.2.7]{Illusie-def}. The second
statement of part~(ii) is clear from part~(i) as the extension $V\subset V^{1/p}$ is isomorphic to the extension
$V^p\subset V$ via the Frobenius map and as $\Omega_V=\Omega_{V/V^p} $.

So it remains to
show that $\bbL_{V}\simeq
\Omega_{V}[0]$ under the assumption of part~(ii). Note that then $V\subset V^{1/p}$ is faithfully flat. By part~(i) the cotangent complex
  $\bbL_{V^{1/p}/V}$ as well as the isomorphic cotangent complex  $\bbL_{V^{1/p^2}/V^{1/p}}$  are concentrated in degrees $0$ and $1$. By the exact triangle
\[
\bbL_{V^{1/p}/V }\otimes_{V^{1/p}} V^{1/p^2} \to  \bbL_{V^{1/p^2}/V } \to \bbL_{V^{1/p^2}/V^{1/p}}
\]
also  $\bbL_{ V^{1/p^2}/V}$ is  concentrated in degrees $0$ and $1$. Arguing
inductively this is also true for the faithfully flat extension $V\subset V^{1/p^\infty}$.

Note that $\bbL_{V^{1/p^\infty}}$ is concentrated in degree zero, since the ring
$V^{1/p^\infty}$ is perfect~\cite[6.5.13(i)]{GabberRamero}. So we conclude by the exact triangle
\[
\bbL_V \otimes_V V^{1/p^\infty} \to \bbL_{V^{1/p^\infty}} \to \bbL_{V^{1/p^\infty}/V}.\qedhere
\]
\end{proof}

\begin{prop}[Cartier isomorphism]\phantomsection\label{app.carprop} 
  \begin{itemize}
    \item[(i)]
  If the extension $V\subset W$ is elementary, then the morphism $\gamma_{W/V}$ of
  \eqref{eq.invcart} is an isomorphism.
\item[(ii)] If $V\subset V^{1/p}$ is a filtered colimit of elementary extensions $V\subset
  W$, then the map $\gamma_V$ is an isomorphism. 
  \end{itemize}
\end{prop}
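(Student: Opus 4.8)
The plan is to prove (i) by an explicit computation after reducing to a single variable, and then to deduce (ii) by passing to a filtered colimit. For (i), use the presentation \eqref{app.eq1} to write $W \cong W_{1}\otimes_{V}\cdots\otimes_{V}W_{r}$ with $W_{j}=V[X_{j}]/(X_{j}^{p}-x_{j}^{p})$. Then the relative de Rham complex decomposes as a tensor product of complexes of $V$-modules, $\Omega^{*}_{W/V}\cong \Omega^{*}_{W_{1}/V}\otimes_{V}\cdots\otimes_{V}\Omega^{*}_{W_{r}/V}$, in which every term is a finite free $V$-module; the Künneth formula (with all terms and all cohomology modules free over $V$, so that the Tor-terms vanish) identifies $H^{*}(\Omega^{*}_{W/V})$ with $H^{*}(\Omega^{*}_{W_{1}/V})\otimes_{V}\cdots\otimes_{V}H^{*}(\Omega^{*}_{W_{r}/V})$ as graded $V$-algebras. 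Since $\gamma_{W/V}$ is the $V$-algebra map sending $1\otimes dx_{j}$ to the class of $x_{j}^{p-1}dx_{j}$ (exactly as the absolute inverse Cartier operator recalled in Section~\ref{sec:deriform}), it becomes the tensor product of the $\gamma_{W_{j}/V}$ under this identification, so it suffices to treat $r=1$. There $W=V[x]$ with $a:=x^{p}\in V$, and $\Omega^{*}_{W/V}$ is the two-term complex $W\to W\,dx$ with $d\big(\sum_{j}c_{j}x^{j}\big)=\sum_{j}jc_{j}x^{j-1}dx$ for $c_{j}\in V$; since each $j\in\{1,\dots,p-1\}$ is a unit in $V$, one reads off $H^{0}(\Omega^{*}_{W/V})=V\cdot 1$ and $H^{1}(\Omega^{*}_{W/V})=V\cdot x^{p-1}dx$, both free of rank one over $V$. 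As $V\otimes_{W}\Omega^{0}_{W/V}=V$ and $V\otimes_{W}\Omega^{1}_{W/V}=V\cdot(1\otimes dx)$, and $\gamma_{W/V}$ sends $1\mapsto 1$ and $1\otimes dx\mapsto x^{p-1}dx$, it is an isomorphism in degrees $0$ and $1$, and trivially in degrees $\geq 2$ where both sides vanish. This proves (i).

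For (ii), first note that by the remark following \eqref{eq.invcart}, applied with the ``$W$'' there being our valuation ring $V$ and the ``$V$'' there being $V^{p}$, the map $\gamma_{V}$ is identified with $\gamma_{V/V^{p}}$; and via the Frobenius isomorphism between the extensions $V^{p}\subset V$ and $V\subset V^{1/p}$ (as in the proof of Proposition~\ref{app.propdiff}) this is in turn identified with $\gamma_{V^{1/p}/V}$. So it suffices to show $\gamma_{V^{1/p}/V}$ is an isomorphism. Write $V^{1/p}=\colim_{W}W$ as the filtered colimit of the elementary extensions $V\subset W$ furnished by the hypothesis. Both the target $H^{i}(\Omega^{*}_{(-)/V})$ and the source $V\otimes_{(-)}\Omega^{i}_{(-)/V}$, where $V$ is regarded as a module over the transition ring through its Frobenius to $V$, are functorial in $W$ and commute with this filtered colimit: for the target because cohomology and Kähler differentials do, and for the source because in addition $V\otimes_{V^{1/p}}\big(\Omega^{i}_{W/V}\otimes_{W}V^{1/p}\big)\cong V\otimes_{W}\Omega^{i}_{W/V}$ compatibly in $W$. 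Since $\gamma_{(-)/V}$ is a natural transformation between these functors, $\gamma_{V^{1/p}/V}=\colim_{W}\gamma_{W/V}$ is a filtered colimit of the isomorphisms provided by (i), hence an isomorphism.

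The computations in part (i) are completely elementary, so I do not expect a genuine difficulty. The step needing the most care is purely bookkeeping: keeping track of the Frobenius-twisted module structure on $V$ appearing in the source of $\gamma$, and verifying that both the Künneth identification in (i) and the colimit identification in (ii) are honestly compatible with the operator $\gamma$ (in particular that $\gamma$ is a map of graded $V$-algebras, which is what lets one reduce to the rank-one factors).
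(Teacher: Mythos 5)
Your proposal is correct and takes essentially the same route as the paper: the paper reduces to $r=1$ ``as in the proof of Katz's Theorem~7.2,'' which is precisely the K\"unneth reduction you spell out, and the one-variable computation you give matches the paper's explicit basis computation for $Z\Omega^1_{W/V}/B\Omega^1_{W/V}$. For part (ii) the paper just states ``immediate consequence of (i),'' and your identifications $\gamma_V = \gamma_{V/V^p} \cong \gamma_{V^{1/p}/V} = \colim_W \gamma_{W/V}$ are exactly the bookkeeping it leaves implicit.
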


 The isomorphism $\gamma$ is usually denoted by $C^{-1}$ and called the \emph{inverse Cartier operator}.

 \begin{proof}
   Part (ii) is an immediate consequence of part (i).
   For part (i) one
  reduces to $r=1$ as in the proof of~\cite[Thm.~7.2]{Katz-nilpotent}. Then one only has to consider $i=0$ and $i=1$. For $i=1$
  a $V$-basis of the left side of \eqref{eq.invcart} is given  by
  $dx_1$. A $V$-basis of $ B\Omega^1_{W/V}$ is given by $dx_1, x_1 dx_1 , \ldots ,
  x_1^{p-2}dx_1$, so a $V$-basis of the right side of \eqref{eq.invcart} is induced by
  $x_1^{p-1}dx_1=\gamma_{W/V} (dx_1)$.
\end{proof}

\subsection{Purely inseparable extensions of valued fields}

Let $K\subset K'$ be an extension of valued fields of characteristic $p$ with $(K')^p\subset K$. Let $V\subset
V'$ be the corresponding extension of valuation rings. A subextension of rings $V\subset
W\subset V'$ is called elementary if the extension $V\subset W$ is elementary in the sense
of  Subsection~\ref{subsec:elem}.

\begin{thm}[Gabber]\label{thm.elemext}
  The set $\mathcal S$ of
elementary extensions $V\subset W\subset V'$ is directed by inclusion
and $$\bigcup_{W\in \mathcal S}
W=V'.$$
\end{thm}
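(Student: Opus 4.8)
The plan is to reduce the theorem to the single assertion ($\star$): \emph{for every finite subset $S\subseteq V'$ there is an elementary extension $V\subseteq W\subseteq V'$ with $S\subseteq W$}. This is in fact equivalent to the theorem. Given ($\star$), directedness follows by taking for $S$ the union of a $p$-basis of $W_1$ and a $p$-basis of $W_2$, since then $W\supseteq V[S]\supseteq W_1,W_2$; and $\bigcup_{W\in\mathcal S}W=V'$ follows by applying ($\star$) to singletons. Conversely the theorem implies ($\star$) by induction on $|S|$. So it remains to prove ($\star$).

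First I would reduce to the case $[K':K]<\infty$: replacing $K'$ by the subfield $K(S)$ — which is finite over $K$ and satisfies $K(S)^p\subseteq K$ — and $V'$ by $V'\cap K(S)$ changes nothing, because an elementary extension $W$ with $W\subseteq V'\cap K(S)$ is a fortiori contained in $V'$. Write $[K':K]=p^m$. Then $M:=V[S]\subseteq V'$ is a module-finite $V$-subalgebra with $M^p\subseteq (V')^p\subseteq V$ and $\Frac M=K'$; moreover $M$ is a free $V$-module of rank $p^m$, since a finitely generated torsion-free module over a valuation ring is free. We must find an elementary $W$ with $M\subseteq W\subseteq V'$.

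The construction of $W$ is the heart of the matter. One produces a $p$-basis $z_1,\dots,z_m$ of $K'/K$ with all $z_i\in V'$ and $M\subseteq V[z_1,\dots,z_m]$; then $W:=V[z_1,\dots,z_m]$ is elementary by definition, and $W\subseteq V'$ automatically. The $z_i$ are chosen in three groups reflecting the numerical invariants of $V\subseteq V'$: generators $z_1,\dots,z_a$ whose values represent an $\F_p$-basis of $\Gamma(V')/\Gamma(V)$; units $z_{a+1},\dots,z_{a+b}$ whose residues form a $p$-basis of the residue-field extension; and the remaining ``defect'' generators $z_{a+b+1},\dots,z_m\in\fm_{V'}$. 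Each of the first $a+b$ elements can be taken with its $p$-th power in $K$ (automatic, as $(K')^p\subseteq K$), and $p$-independence of $z_1,\dots,z_{a+b}$ over $K$ follows from the fundamental inequality — the subfield they generate has value-group index at least $p^a$ and residue-field index at least $p^b$, hence degree exactly $p^{a+b}$. That a $p$-independent family extends to a $p$-basis of $K'/K$ (used both to complete $z_1,\dots,z_a$ to a $p$-basis and to adjoin the defect part) is the differential-basis characterisation of $p$-bases for finite field extensions, cf.\ \cite[Thm.~26.5]{Matsumura} and \cite[$0_{\mathrm{IV}}$(21.2)]{EGA}.

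The main obstacle, and the point where Gabber's argument is genuinely needed, is arranging that this $p$-basis can simultaneously be chosen inside $V'$ \emph{and} with $M\subseteq V[z_1,\dots,z_m]$. Expanding the finitely many $s\in S$ as $s=\sum_f c_{s,f}\prod_i z_i^{f_i}$ with $c_{s,f}\in K$, one shows — using $s\in V'$ and comparing the values of the summands, which lie in prescribed cosets of $\Gamma(V)$ inside $\Gamma(V')$ together with a residue-field bookkeeping — that the valuations $v(c_{s,f})$ are bounded below by an explicit amount depending only on the $v'(z_i)$. This bound is then absorbed by replacing each $z_i$ with $\lambda_iz_i$, $\lambda_i\in K^\times$ of suitably negative but controlled valuation: this is permissible because the value-group generators have room inside their coset and the defect generators lie in $\fm_{V'}$, so a small negative shift keeps them in $V'$; rescaling by an element of $K^\times$ does not affect $p$-independence since $d(\lambda_iz_i)=\lambda_i\,dz_i$ in $\Omega_{K'/K}$; and after the rescaling every $c_{s,f}$ lies in $V$, so $M=V[S]\subseteq V[z_1,\dots,z_m]$. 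The delicate part is the wild defect case, where $V'\cap K'$ need not be module-finite over $V$ at all, so that one really must produce a single finite elementary layer absorbing all of $S$; the essential input is a bound on these ``denominators'' that is uniform over all module-finite $V$-subalgebras of $V'\cap K'$, expressed through $\Gamma(V')/\Gamma(V)$ and the residue extension. This is what is taken from \cite{Gabber_letter} and \cite{GabberRamero}.
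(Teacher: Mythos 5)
Your reformulation as~($\star$) and the high-level plan are sensible, but the proposal has a genuine gap exactly where the theorem has its content, and the route you choose does not obviously lead there.

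Your observation that the theorem is equivalent to~($\star$), and the reduction to a finite extension $K(S)/K$, are correct and harmless. But the paper does not try to go directly from there to a single elementary layer. Instead it first reduces, via Lemma~\ref{app.lem1}, to the case $[K':K]=p$ (composing good extensions is good), then to finitely generated $K/\F_p$ so that the valuation has finite height, then by induction on height via Lemma~\ref{app.lem2} to the rank-one case. All of the genuine work happens in that rank-one, degree-$p$ setting, where the three cases (residue index $p$, ramification index $p$, defect) are analysed. Your sketch bypasses all of these reductions and tries to treat the whole value-group / residue-field / defect decomposition at once; but the approximation machinery the paper uses is set up only after those reductions, so you cannot simply ``take it from the references'' in the form your sketch needs it.

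The concrete gap is in your final paragraph. You propose to choose a $p$-basis $z_1,\dots,z_m$ of $K'/K$ inside $V'$ up front, expand the elements of $S$ in it, observe that the denominators are bounded, and then rescale each $z_i$ by some $\lambda_i\in K^\times$ to absorb the denominators while staying in $V'$. Two things go wrong in the defect case. First, the claimed bound $v(c_{s,f})\ge -\sum_i f_i\,v'(z_i)$ — equivalently $|c_{s,f}\prod z_i^{f_i}|\le 1$ — is false for a generic $p$-basis: when the value group and residue field do not change, the monomials $\prod z_i^{f_i}$ all have commensurable values and the same residues, so one cannot compare $|s|$ to $\max_f |c_{s,f}\prod z_i^{f_i}|$ by the ultrametric inequality. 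This is precisely what the paper's Lemma~\ref{lem.keyval} asserts, and it is proved only for $z=z_n=x-y_n$ where $(y_n)$ is a best-approximation sequence and $n\gg 0$ depending on the element $v\in V'$ you want to absorb; it relies on henselizing, splitting the coefficient polynomial, and \cite[Lemma~6.1.9]{GabberRamero}. Second, once the bound is known, the rescaling $z\mapsto z/w_n$ requires $|w_n|\ge|z_n|$ and $|w_n|/|z_n|\to 1$, and in the non-discrete case one must use density of $|K^\times|$ in $\R_{\ge 0}$; the paper's Proposition~\ref{prop.approx} packages exactly these constraints. So the ``bound on denominators that is uniform over all module-finite $V$-subalgebras'' you invoke is not available in the form you need: the defect generator cannot be chosen independently of $S$, and the elementary ring containing $S$ is $V[(x-y_n)/w_n]$ for $n$ large enough depending on $S$, not a rescaling of a fixed $V[z]$.

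In short: the proposal correctly identifies where the difficulty sits, but the heart of the proof — the approximation argument that produces, for each finite $S$, a specific defect generator with controlled denominators — is deferred to the references rather than proved, and the surrounding scaffolding (no reduction to degree $p$, no reduction to rank one) makes that deferral impossible to close without essentially rewriting the paper's argument.
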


Combining Theorem~\ref{thm.elemext} with Propositions~\ref{app.propdiff} and~\ref{app.carprop} we obtain:

\begin{cor}\label{app.main.cor}
	\label{cor:Cartier-iso-valuation-ring}
        Let $V$ be a valuation ring of characteristic $p$. Then the following hold.
        \begin{itemize}
        \item[(i)] $\Omega_V$ is a flat $V$-module.
          \item[(ii)] $\bbL_V$ is concentrated in degree zero.
          \item[(iii)] The ``inverse Cartier'' operator $\gamma_V$ from Subsection~\ref{subsec:elem} is
            an isomorphism.
        \end{itemize}
      \end{cor}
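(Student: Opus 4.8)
The plan is to deduce all three assertions formally from Gabber's Theorem~\ref{thm.elemext} together with Propositions~\ref{app.propdiff} and~\ref{app.carprop}. The point is that the hypothesis appearing in part~(ii) of both propositions --- namely that $V\subset V^{1/p}$ be a filtered colimit of elementary extensions --- is exactly what Theorem~\ref{thm.elemext} provides, once we feed it the correct valued field extension.

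First I would set $F=\Frac(V)$ and consider the purely inseparable extension $F\subset F^{1/p}$. Since it is purely inseparable, the valuation on $F$ determined by $V$ extends uniquely to $F^{1/p}$, and I would check that the corresponding valuation ring is precisely $V^{1/p}=\{x\in F^{1/p}: x^{p}\in V\}$: writing $v$ for the unique extension of the valuation, one has $v(x)\ge 0 \iff v(x^{p})\ge 0 \iff x^{p}\in V$. Moreover $(F^{1/p})^{p}=F$, so the standing hypothesis of Subsection~\ref{subsec:elem} (that $(K')^{p}\subset K$) holds with $K=F$ and $K'=F^{1/p}$, and the extension of valuation rings associated with $F\subset F^{1/p}$ is exactly $V\subset V^{1/p}$.

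Applying Theorem~\ref{thm.elemext} to this extension then shows that the set $\mathcal S$ of elementary subextensions $V\subset W\subset V^{1/p}$ is directed by inclusion and that $V^{1/p}=\bigcup_{W\in\mathcal S}W=\colim_{W\in\mathcal S}W$ is a filtered colimit of elementary extensions of $V$. Consequently the hypothesis of Proposition~\ref{app.propdiff}(ii) is satisfied, giving $\bbL_{V}\simeq\Omega_{V}[0]$ with $\Omega_{V}$ a flat $V$-module; this is precisely assertions~(i) and~(ii). Likewise the hypothesis of Proposition~\ref{app.carprop}(ii) is satisfied, which yields that $\gamma_{V}$ is an isomorphism, i.e.~assertion~(iii).

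Since the argument is a direct combination of results already established in the appendix, there is no serious obstacle; the only point requiring a little care is the identification of $V^{1/p}$ as the valuation ring of $F^{1/p}$ dominating $V$, which rests on the standard fact that a valuation extends uniquely along a purely inseparable field extension.
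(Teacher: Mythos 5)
Your proposal is correct and follows exactly the route the paper takes: the corollary is stated immediately after the sentence ``Combining Theorem~\ref{thm.elemext} with Propositions~\ref{app.propdiff} and~\ref{app.carprop} we obtain'', so the intended proof is precisely the one you wrote out. You add a useful bit of bookkeeping by verifying that the valuation ring of $F^{1/p}$ (with its unique extension of the valuation) is $V^{1/p}$, a detail the paper leaves implicit.
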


      \begin{rmk}
Parts (i) and (ii) of Corollary~\ref{app.main.cor} are shown in Theorem~6.5.12 and Corollary~6.5.21 in~\cite{GabberRamero} using related techniques. These techniques are
extended in~\cite{Gabber_letter} to prove Theorem~\ref{thm.elemext} and
Corollary~\ref{app.main.cor}. 
        \end{rmk}

We repeatedly need the following well-known result about finite extensions of valuation
rings, see Sections VI.8.3 and VI.8.5 in~\cite{Bourbaki_comalg}. 
\begin{lem}\label{app:lembour}
Let $V\subset V'$ be an extension of valuation rings with the above properties and with
$q=[K':K]$ finite. Let $f$ be
the degree of the residue field extension and let $e=[|(K')^\times|:|K^\times |]$ be the
ramification index. Then
\begin{itemize}
\item[(i)] $q\ge e f$, $e|q$ and $f|q$,
  \item[(ii)] if $f=q$ the extension $ V'/V$ is finite and $ V' \mathfrak m$ is the
    maximal ideal of $V'$, where $\mathfrak m$ is the maximal ideal of $V$.
\end{itemize}
\end{lem}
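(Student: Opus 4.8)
The plan is to derive the statement from the fundamental inequality of valuation theory, exploiting the two special features of our situation: $K'/K$ is purely inseparable of exponent $\le 1$, and hence so are the induced extensions of value groups and of residue fields. Throughout, write $v$ for the valuation of $V'$ (restricting to that of $V$), $\Gamma = |K^\times| \subseteq \Gamma' = |(K')^\times|$, $\fm$ and $\fm'$ for the maximal ideals of $V$ and $V'$, and $\kappa \subseteq \kappa'$ for the residue fields.

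First I would settle the divisibilities. Since $(K')^p \subseteq K$, one has $q = [K':K] = p^s$ for some $s \ge 0$. For $x \in K'$ we get $p\,v(x) = v(x^p) \in \Gamma$, so $\Gamma'/\Gamma$ is annihilated by $p$; being finite of order $e$ it is an elementary abelian $p$-group, so $e = p^a$. Lifting $\bar x \in \kappa'$ to $x \in V'$ gives $x^p \in K \cap V' = V$, hence $\overline{x^p} \in \kappa$ is the $p$-th power of $\bar x$, so $\kappa'/\kappa$ is purely inseparable of exponent $\le 1$ and $f = [\kappa':\kappa] = p^b$. Once the inequality $ef \le q$ is known, it reads $p^{a+b} \le p^s$, so $a,b \le s$; in particular $e \mid q$ and $f \mid q$.

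The core of part~(i) is the inequality $ef \le q$, for which I would run the classical argument: choose $x_1, \dots, x_f \in V'$ whose residues form a $\kappa$-basis of $\kappa'$ and $\pi_1, \dots, \pi_e \in (K')^\times$ whose values represent the cosets of $\Gamma$ in $\Gamma'$, and show the $ef$ products $x_i\pi_j$ are $K$-linearly independent. The key auxiliary fact is that for $a_1, \dots, a_f \in K$ not all zero one has $v\!\left(\sum_i a_i x_i\right) = \min_i v(a_i)$: after dividing by some $a_{i_0}$ realizing the minimum, the sum lies in $V'$ with residue $\sum_i \overline{a_i/a_{i_0}}\,\bar x_i \ne 0$, hence is a unit. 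Given a relation $\sum_{i,j} a_{ij} x_i \pi_j = 0$, the elements $b_j = \sum_i a_{ij} x_i$ then satisfy $v(b_j) \in \Gamma$ whenever $b_j \ne 0$, so the nonzero $b_j\pi_j$ have values in pairwise distinct cosets of $\Gamma$, hence pairwise distinct; thus $v\!\left(\sum_j b_j\pi_j\right)$ would be finite unless all $b_j = 0$, forcing all $a_{ij} = 0$, and $ef \le \dim_K K' = q$.

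Finally, for part~(ii) I would specialize to $f = q$. Then $e \le q/f = 1$, so $e = 1$ and $\Gamma = \Gamma'$. Applying the independence argument with the single representative $\pi_1 = 1$ shows the $q$ elements $x_1, \dots, x_q$ form a $K$-basis of $K'$; and for $y \in V'$, writing $y = \sum_i a_i x_i$ with $a_i \in K$, the auxiliary fact gives $\min_i v(a_i) = v(y) \ge 0$, so all $a_i \in V$ and $V' = \sum_i V x_i$ is finite over $V$. For the maximal ideal, $\fm V' \subseteq \fm'$ is clear, and conversely if $y \in \fm'$ then $v(y) > 0$ lies in $\Gamma' = \Gamma$, so $v(a) = v(y)$ for some $a \in \fm$, whence $y = a\cdot(y/a)$ with $y/a$ a unit of $V'$ and $y \in \fm V'$; thus $\fm' = \fm V'$. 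I expect no real obstacle beyond assembling these standard ingredients — the only slightly delicate point is the auxiliary valuation identity, which is itself routine (compare Bourbaki, \emph{Commutative Algebra}, VI.8).
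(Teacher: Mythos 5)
Your proof is correct and fills in precisely what the paper leaves implicit: the paper treats this as a well-known fact and simply cites Bourbaki (VI.8.3 and VI.8.5) without giving an argument. You reconstruct the standard Bourbaki fundamental-inequality proof, together with the observation — specific to the purely inseparable situation at hand, since $\Gamma'/\Gamma$ and $\kappa'/\kappa$ are both killed by $p$ — that $e$, $f$, $q$ are all $p$-powers, which yields the divisibilities; the whole thing is sound.
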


In the proof of Theorem~\ref{thm.elemext} we use two preliminary reductions based on the
following lemmas. Let us call an
extension of valuation rings $V\subset V'$ \emph{good} if it satisfies the conclusion of
Theorem~\ref{thm.elemext}.

\begin{lem}\label{app.lem1}
If $V\subset V'\subset V''$ are extensions of valuation rings of characteristic $p$ with
$(V'')^p\subset V$ such that $V\subset V'$ and $V'\subset V''$ are good extensions, then also
$V\subset V''$ is a good extension.
\end{lem}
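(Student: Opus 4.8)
The statement is that goodness of extensions of valuation rings is transitive: if $V\subset V'$ and $V'\subset V''$ are good (meaning each is a filtered union of elementary subextensions) and $(V'')^p\subset V$, then $V\subset V''$ is good. The plan is to produce, for any finite set of elements of $V''$, an elementary extension $V\subset W\subset V''$ containing them; together with the fact that the elementary subextensions of $V\subset V''$ form a directed set (closed under the compositum inside $V''$, using that a compositum of two elementary extensions is again elementary — this is formal from the $p$-basis description and the presentation \eqref{app.eq1}), this gives $\bigcup_{W} W = V''$.

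So first I would take finitely many $z_1,\dots,z_m\in V''$. Since $V'\subset V''$ is good, there is an elementary $V'\subset W''\subset V''$ with $z_1,\dots,z_m\in W''$; let $y_1,\dots,y_s$ be a finite $p$-basis of $W''$ over $V'$. These $p$-basis elements lie in $V''$, hence (enlarging $W''$ if necessary, or just by choice) we have $W'' = V'[y_1,\dots,y_s]$. Next, because $V\subset V'$ is good, I would find an elementary $V\subset W'\subset V'$ containing the finitely many elements of $V'$ that appear as coefficients: namely all the elements $y_j^p \in (W'')^p \subset V'$ (these lie in $V'$ since $(V'')^p \subset V$ a fortiori $(W'')^p\subset V\subset V'$), so that the presentation $W'' = W'[Y_1,\dots,Y_s]/(Y_j^p - y_j^p)$ already makes sense over $W'$. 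Set $W = W'[y_1,\dots,y_s] \subset V''$. Then I would check $V\subset W$ is elementary: a $p$-basis of $W$ over $V$ is obtained by concatenating a $p$-basis $x_1,\dots,x_r$ of $W'$ over $V$ with $y_1,\dots,y_s$ — one verifies $W = V[x_1,\dots,x_r,y_1,\dots,y_s]$ and that these elements are $p$-independent in $Q(W)/Q(V)$, the latter by transitivity of $p$-bases for the tower of fraction fields $Q(V)\subset Q(W')\subset Q(W)$. Finally $z_1,\dots,z_m\in W'' \subset W$ (since $W'' = V'[y_1,\dots,y_s]$ and we need $W''\subset W$; but $W = W'[y_1,\dots,y_s]$ with $W'\subset V'$, so I should instead arrange $V'\cap W$ to contain the relevant coefficients — more carefully, one enlarges $W'$ inside $V'$ so that $W''\subseteq W'[y_1,\dots,y_s]$, which is possible since each $z_i$, written in the $V'$-module basis of monomials in the $y_j$, has only finitely many coefficients in $V'$, and $V\subset V'$ good lets us absorb those finitely many coefficients into a single elementary $W'$).

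The main obstacle is this last bookkeeping step: an element of $W''$ is a $V'$-linear combination of $p$-monomials in $y_1,\dots,y_s$, so it has finitely many coefficients in $V'$, but to make it lie in $W'[y_1,\dots,y_s]$ we need a single elementary $V\subset W'\subset V'$ simultaneously containing all those coefficients (for all the $z_i$) as well as all $y_j^p$; this is exactly where goodness of $V\subset V'$ together with directedness of its elementary subextensions is used. Once $W'$ is fixed, verifying that $W = W'[y_1,\dots,y_s]$ is elementary over $V$ — that the concatenated family is a $p$-basis — is routine from the field-theoretic transitivity of $p$-bases and the flat l.c.i. presentation \eqref{app.eq1}.
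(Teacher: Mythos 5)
Your proof is correct and follows essentially the same route as the paper: concatenate a $p$-independent family for $V''/V'$ with one for $V'/V$, take a finite set of elements of $V''$, trap it in an elementary extension of $V'$, then absorb the finitely many $V'$-coefficients (together with the $p$-th powers of the chosen $p$-basis) into an elementary extension of $V$. The only quibble is the parenthetical claim that the compositum of two elementary extensions is automatically elementary — this is not obvious and is not needed, since directedness already follows from the covering statement you prove (any two elementary extensions are finitely generated, hence their generators lie in a single elementary extension by the covering).
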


\begin{proof}
Let $\mathbf{ x} = (x_1,\ldots , x_r)$ be $p$-independent elements in the extension $V'/V$  and let
$\mathbf y = (y_1,\ldots , y_s)$ be $p$-independent elements in the extension $V''/V'$.
Then $\mathbf x , \mathbf y$ are $p$-independent in the extension $V''/V$, so
$V[\mathbf x , \mathbf y]$ is an 
elementary extension of $V$.

Consider a finitely generated $V$-subalgebra $A$ of $V''$; we have to show that for
suitable $\mathbf{ x}$ and $ \mathbf y$ as above we have
$A\subset V[\mathbf x , \mathbf y]$. Indeed, there exist $p$-independent elements
$\mathbf y$ in the extension $V''/V'$ such that $V'A\subset V'[\mathbf y ]$. So $A\subset
B[\mathbf y ]$ for a finitely generated $V$-subalgebra $B$ of $V'$. There exist $p$-independent elements
$\mathbf x$ in the extension $V'/V$ such that $B\subset V[\mathbf x ]$. Then we get
$A\subset V[\mathbf x, \mathbf y ]$ as requested.
\end{proof}

\begin{lem}\label{app.lem2}
Let $V\subset V'$ be an extension of valuation rings as above with $[K':K]=p$. Let
$\mathfrak p'\subset V'$ be a
prime ideal lying over a prime ideal $\mathfrak  p\subset V$. Assume that one of the following two
conditions holds:
\begin{itemize}
\item[(a)] the residue field extension $\kappa(\mathfrak  p')/\kappa(\mathfrak  p)$ is trivial and $V_{\mathfrak p}\subset
  V'_{\mathfrak p'}$ is good, or
\item[(b)] $[\kappa(\mathfrak p'):\kappa(\mathfrak p)]=p$ and $V/\mathfrak  p\subset
  V'/\mathfrak  p'$ is good.
\end{itemize}
Then the extension  $V\subset V'$ is  good. 
\end{lem}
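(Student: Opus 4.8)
The plan is to reformulate goodness concretely and then verify the reformulated condition in each case. Fix a valuation $v'$ of $K'$ with valuation ring $V'$ and write $v=v'|_{K}$. Since $(K')^{p}\subset K$ and $[K':K]=p$, the extension $K\subset K'$ is purely inseparable of degree $p$; hence $V'$ is the unique valuation ring of $K'$ over $V$, $\mathfrak p'$ is the unique prime of $V'$ over $\mathfrak p$, $V'_{\mathfrak p'}=(V\setminus\mathfrak p)^{-1}V'$, one has $V'\cap K=V$, and $a^{p}\in V$ for every $a\in V'$. Consequently a $p$-independent family of elements of $V'$ over $V$ has at most one member, and the elementary subextensions of $V\subset V'$ are exactly $V$ and the rings $V[x]$ with $x\in V'\setminus K$ (for such $x$, the monomials $1,x,\dots,x^{p-1}$ form both a $V$-basis of $V[x]$ and a $p$-basis of $K'/K$). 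As the finitely generated $V$-subalgebras of $V'$ form a directed system with union $V'$ and each $V[x]$ is finitely generated over $V$, it follows that $V\subset V'$ is good (i.e.\ $\mathcal S$ is directed with union $V'$, cf.\ Theorem~\ref{thm.elemext}) precisely when every finitely generated $V$-subalgebra $M\subset V'$ is contained in some $V[x]$ with $x\in V'\setminus K$. I would prove this reformulation first; it applies verbatim to the extensions $V_{\mathfrak p}\subset V'_{\mathfrak p'}$ and $V/\mathfrak p\subset V'/\mathfrak p'$, which are again purely inseparable of degree $1$ or $p$.

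\emph{Case (a).} Let $M\subset V'$ be finitely generated over $V$; then $(V\setminus\mathfrak p)^{-1}M$ is a finitely generated $V_{\mathfrak p}$-subalgebra of $V'_{\mathfrak p'}$, so by hypothesis $(V\setminus\mathfrak p)^{-1}M\subset V_{\mathfrak p}[c_{0}]$ for some $c_{0}\in V'_{\mathfrak p'}\setminus K$. Since $\kappa(\mathfrak p')=\kappa(\mathfrak p)$, the residue of $c_{0}$ lifts to some $\lambda\in V_{\mathfrak p}$; set $c:=c_{0}-\lambda$, so that $V_{\mathfrak p}[c]=V_{\mathfrak p}[c_{0}]$, $c\notin K$, and $c$ lies in the maximal ideal of $V'_{\mathfrak p'}$, whence $v'(c)$ exceeds every element of the value group of $V'/\mathfrak p'$; in particular $c\in\mathfrak m_{V'}$ and $v'(c)>v(t)$ for every $t\in V\setminus\mathfrak p$. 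Writing each generator of $M$ in the $V_{\mathfrak p}$-basis $1,c,\dots,c^{p-1}$ of $V_{\mathfrak p}[c]$ and clearing a common denominator $t\in V\setminus\mathfrak p$, put $x:=c/t$; then $x\in V'\setminus K$, and expanding the generators of $M$ in the powers of $x$ one sees that the coefficients of $x,\dots,x^{p-1}$ lie in $V$, hence so does the constant coefficient (it is the generator minus an element of $V[x]\subset V'$, and lies in $K$, so in $V'\cap K=V$). Thus $M\subset V[x]$.

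\emph{Case (b).} Now $[\kappa(\mathfrak p'):\kappa(\mathfrak p)]=p=[K':K]$, so Lemma~\ref{app:lembour}(ii) shows that $V_{\mathfrak p}\subset V'_{\mathfrak p'}$ is finite with maximal ideal $\mathfrak p V'_{\mathfrak p'}$. Given $M=V[a_{1},\dots,a_{n}]\subset V'$ finitely generated, reduce modulo $\mathfrak p'$ to obtain a finitely generated $(V/\mathfrak p)$-subalgebra $\overline M\subset V'/\mathfrak p'$; by hypothesis $\overline M\subset(V/\mathfrak p)[\bar d]$ for some $\bar d\in(V'/\mathfrak p')\setminus\kappa(\mathfrak p)$, which I lift to $d\in V'\setminus K$. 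As $\bar d$ generates the degree-$p$ residue extension, Nakayama's lemma yields $V'_{\mathfrak p'}=V_{\mathfrak p}[d]$, so $a_{i}=\sum_{j}\gamma_{ij}d^{j}$ with $\gamma_{ij}\in V_{\mathfrak p}$. Reducing this identity modulo $\mathfrak p'$ and comparing it, via the $\kappa(\mathfrak p)$-basis $1,\bar d,\dots,\bar d^{p-1}$ of $\kappa(\mathfrak p')$, with the expansion provided by $\overline M\subset(V/\mathfrak p)[\bar d]$ shows that each $\gamma_{ij}$ is congruent modulo $\mathfrak p V_{\mathfrak p}$ to an element of $V$; since $\mathfrak p V_{\mathfrak p}\subset\mathfrak m_{V}\subset V$, this forces $\gamma_{ij}\in V$, so $M\subset V[d]$.

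The one delicate step, in both cases, is to manufacture out of the ``coarse'' $p$-basis element supplied by the goodness of $V_{\mathfrak p}\subset V'_{\mathfrak p'}$ (case (a)) or of $V/\mathfrak p\subset V'/\mathfrak p'$ (case (b)) an element of $V'\setminus K$ whose powers still span the prescribed finitely generated subalgebra over $V$ — handled in case (a) by the substitution $x=c/t$ together with the identity $V'\cap K=V$, and in case (b) by the monogenicity of the finite localized extension and the fact that residual coordinates determine the $V_{\mathfrak p}$-coordinates modulo $\mathfrak p V_{\mathfrak p}$. Beyond this, the only external input is Lemma~\ref{app:lembour}; the remaining bookkeeping is routine valuation theory once the reformulation above is in place.
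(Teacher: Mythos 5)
Your proof is correct and takes essentially the same route as the paper's: the up-front reformulation of goodness (every finitely generated $V$-subalgebra of $V'$ lies in some $V[x]$, $x\in V'\setminus V$) is implicit there, and the technical cores coincide. Your denominator-clearing plus $V'\cap K=V$ in case (a) is the content of the paper's observation that $x/t\in V'$ for $x\in\mathfrak p'$, $t\in V'\setminus\mathfrak p'$ together with its identity $R_x=V_{\mathfrak p}[x]\cap V'$, while your Nakayama-plus-$\mathfrak p V_{\mathfrak p}\subset V$ argument in case (b) is exactly how the paper shows that its fibre-product map $\Phi(\bar W)=V'\times_{V'/\mathfrak p'}\bar W$ lands in elementary extensions.
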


\begin{proof}
In both parts of the proof we need the
\begin{claim}\label{app.claimval}
For $x\in
\mathfrak p'$ and $t\in V'\setminus \mathfrak p'$ we have $x/t\in V'$.
\end{claim}

\begin{proof}[Proof of Claim~\ref{app.claimval}]
If $x/t$ were not in $V'$, then $t/x$ would be in $V'$, but that would imply $t\in
\mathfrak p'$, which is a contradiction.
\end{proof}

  In order to prove Lemma~\ref{app.lem2} in case condition (a) holds, we start by observing that the assumptions imply that
\begin{equation}\label{app.eqfrak}
    V'_{\mathfrak p'}= \bigcup_{x\in \mathfrak p'} V_{\mathfrak p}[x]
  \end{equation}
  and that the system of elementary extensions of $ V_{\mathfrak p}$ in the union is
  directed.
  
  We prove that $V\subset V'$ is good  by showing that the elementary extensions of $V$ of the form $V[x/t]$ with $x\in
\mathfrak p'$ and $t\in V\setminus \mathfrak p$ are directed and that
their union is $V'$. Note that $x/t$ is automatically in $V'$ by Claim~\ref{app.claimval}.

For given  $x\in
\mathfrak p'$ we consider the ring
$R_x=\cup_{t\in V\setminus \mathfrak p} V[x/t]$. Then as $R_x=V+V_{\mathfrak p} x +
V_{\mathfrak p} x^2+\cdots$ we have $R_x=V_{\mathfrak p}[x] \cap V'$.
So by~\eqref{app.eqfrak} the system of rings $R_x$ where $x$ runs through $ \mathfrak p'$ is directed with union $V'$.

\smallskip

In order to prove Lemma~\ref{app.lem2} in case condition (b) holds, we show that there is a canonical map from the set $\bar {\mathcal S}$
 of elementary extensions $V/\mathfrak p \subsetneq \bar W \subset V'/\mathfrak p'$   to the set  $ {\mathcal S}$ of elementary
extensions $V\subsetneq W\subset V'$ given by
\[
\Phi\colon \bar{\mathcal S} \to  {\mathcal S}, \quad \Phi(\bar W ) = V'\times_{V'/\mathfrak p'}
\bar W.
\]
Once we show that $\Phi$ is well-defined and using that $V/\mathfrak  p\subset
  V'/\mathfrak  p'$ is good we immediately deduce that the set of elementary extensions
$\Phi(\bar{\mathcal S})$ is directed by inclusion and that $\cup_{\bar W\in\bar{\mathcal S} }
\Phi(\bar W)=V'$.

In order to show that $\Phi$ is well-defined consider an elementary extension
$ \bar W= V/\mathfrak p[\bar x]$ and lift $\bar x\in V'/\mathfrak p'$ to an element
$x\in V'$. Clearly, we have the inclusion $V[x]\subset \Phi (\bar W)$ and we claim that
equality holds. To see this start with an element $y\in \Phi (\bar W)\subset V'$. By
subtracting from $y$ a lift of $\bar y\in \bar W$ to $V[x]$ we can assume without loss of
generality that $y\in \mathfrak p'$. By Lemma~\ref{app:lembour} the ring extension
$V_{\mathfrak p}\subset V'_{\mathfrak p'}$ is finite and
$ V'_{\mathfrak p'}/\mathfrak p V'_{\mathfrak p'} = \kappa (\mathfrak p')$, so by Nakayama's lemma we
have $V'_{\mathfrak p'}=V_{\mathfrak p} [x]$ and we can write $y=a_0 + a_1 x + \cdots +
a_{p-1}x^{p-1}$ with $a_0,\ldots ,  a_{p-1}\in V_{\mathfrak p} $. As $y\in \mathfrak p'$
we actually have $a_0,\ldots ,  a_{p-1}\in \mathfrak p_{\mathfrak p} $. However, 
$\mathfrak p_{\mathfrak p}\subset V$ by Claim~\ref{app.claimval} and therefore $y\in V[x]$.
\end{proof}

\begin{proof}[Proof of Theorem~\ref{thm.elemext}] 
By Lemma~\ref{app.lem1} one reduces to $[K':K]=p$. By writing $K$ as a filtered colimit of
finitely generated fields we can also assume without loss of generality that the field extension $K/\mathbb F_p$ is finitely generated. Then the valuation is
of finite height; to see this, combine Proposition~VI.10.2.3 and Corollary~VI.10.3.1 from \cite{Bourbaki_comalg}. By induction on the height and using Lemma~\ref{app.lem2} one
reduces to the case of height one.

From now on we assume that the valuations are given by an absolute value $|\cdot | : K'\to
\mathbb R$ and that $[K':K]=p$.
We distinguish three cases:

\medskip

\noindent\textit{Case 1}: $[\kappa(V') : \kappa(V)]=p$. \\[.5ex]
In this case $V'$ is finite over $V$ and $V'/V'\fm= \kappa(V') $ by Lemma~\ref{app:lembour}, where $\fm$ is the
maximal ideal of $V$. Let $x\in V'$ be a lift of a generator of the field extension
$\kappa(V') / \kappa(V)$. Then $V'=V[x]$ by Nakayama's lemma.

\medskip

\noindent\textit{Case 2}: The valuation is discrete and $[|(K')^\times|:|K^\times |]=p$.\\[.5ex]
Choose a uniformizer $x\in V'$. Then for any element $y=a_0 + a_1 x + \cdots +  a_{p-1}
x^{p-1}$ in $K'$ with $a_0,\ldots, a_{p-1}\in K$ we have \[|y|=\max\{|a_0|,\ldots ,
|a_{p-1}x^{p-1}|\} \] as the non-zero real numbers in the max are pairwise different (in fact they
are pairwise different in $|(K')^\times|/|K^\times|\cong \mathbb Z /p\mathbb Z$). This means
that if $|y|\le 1$ then $a_0,\ldots , a_{p-1}\in V$, since $|V|=|K|\cap [0,|x|^{1-p}] $.

\medskip

\noindent\textit{Case 3}: Remaining cases.\\[.5ex]
Now Gabber's approximation method is applicable, which is explained in the next section.
For example, if $V$ is a discrete valuation ring and the ramification index is one, then
given a  sequence $(y_n)$  with the property of Proposition~\ref{prop.approx} one chooses 
$(w_n)$ such that $|w_n|=|x-y_n|$, which is possible
by $|K'|=|K|$. 

If the valuation is not discrete, then given a sequence
$(y_n)$ with the property of Proposition~\ref{prop.approx} it is possible to find a
sequence $(w_n)$ with the requested property by successively choosing $w_n\in V$ for $n\ge
1$ with \[ |x-y_n|\le |w_n|
\le \min\{ |w_{n-1}|, \frac{n+1}{n} |x-y_n|\} .\] 
This can be done since $|K^\times|$ is dense in $\mathbb R_{\ge 0}$.
\end{proof}

\subsection{Gabber's approximation method}
In this subsection let $K\subset K'$ be a purely inseparable extension of valued fields of
height one and of characteristic
$p$ with $[K':K]=p$. Let $V\subset V'$ be the corresponding extension of valuation rings.
Assume that $\kappa(V')=\kappa(V)$. Fix $x\in V'\setminus V$, so that $K'=K[x]$.

\begin{prop}[Gabber]\label{prop.approx}
Assume that there exist two sequences $(y_n)_n$ and $(w_n)_n$ in $V$ with the following
properties:
\begin{itemize}
\item $|x-y_n| $ is non-increasing, and for any $y\in V$ we have $|x-y_n|\le |x-y|$ for $n\gg
  0$,
  \item $|w_n|$ is non-increasing, $|x-y_n|\le |w_n| $ for all $n$, and \[\lim_{n\to \infty} |x-y_n|/|w_n|=1 .\]
\end{itemize}
Then
\[
V'=\bigcup_n V[\frac{x-y_n}{w_n}],
\]
and the system of subrings in the union is increasing in $n$.
\end{prop}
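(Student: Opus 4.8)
I would set $z_n=\dfrac{x-y_n}{w_n}$ and first dispose of the formalities. Since $|z_n|=|x-y_n|/|w_n|\le 1$, we have $z_n\in V'$, so $V[z_n]\subseteq V'$ for every $n$, which gives the inclusion ``$\supseteq$''. The monotonicity $V[z_n]\subseteq V[z_{n+1}]$ follows from the identity
\[
z_n=\frac{y_{n+1}-y_n}{w_n}+\frac{w_{n+1}}{w_n}\,z_{n+1},
\]
gotten by rewriting $y_n+w_nz_n=x=y_{n+1}+w_{n+1}z_{n+1}$: the second coefficient lies in $V$ because $|w_n|$ is non-increasing, and the first because $|y_{n+1}-y_n|\le\max\{|x-y_{n+1}|,|x-y_n|\}=|x-y_n|\le|w_n|$.

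For the reverse inclusion $V'\subseteq\bigcup_n V[z_n]$ I would reduce to the maximal ideal $\fm'$ of $V'$: since $\kappa(V')=\kappa(V)$ one has $V'=V+\fm'$, and $V\subseteq V[z_n]$, so it suffices to show that every $m\in\fm'$ lies in some $V[z_n]$. As $1,x,\dots,x^{p-1}$ is a $K$-basis of $K'$, write $m=P(x)$ with $P\in K[X]$ of degree $<p$, Taylor-expand $P$ about $y_n$, and substitute $x-y_n=w_nz_n$ to obtain
\[
m=\sum_{k=0}^{p-1}c_k^{(n)}z_n^k,\qquad c_k^{(n)}:=\tfrac1{k!}P^{(k)}(y_n)\,w_n^k\in K.
\]
Since $z_n^p=(x^p-y_n^p)/w_n^p\in V$ and $Z^p-z_n^p$ is the minimal polynomial of $z_n$ over $K$, the powers $1,z_n,\dots,z_n^{p-1}$ form a $V$-basis of $V[z_n]$, so $m\in V[z_n]$ iff all $c_k^{(n)}\in V$. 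Putting $d_k^{(n)}:=c_k^{(n)}z_n^k=\tfrac1{k!}P^{(k)}(y_n)(x-y_n)^k$, so that $m=\sum_kd_k^{(n)}$ and $|c_k^{(n)}|=|d_k^{(n)}|/|z_n|^k$, and using $|z_n|\to 1$, the task becomes: show $|d_k^{(n)}|\le|m|$ for all $k$ and $n\gg 0$ (then $|c_k^{(n)}|\le|m|/|z_n|^k<1$ for $n$ large, so $c_k^{(n)}\in V$).

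If $|x-y_n|\notin|K^\times|$ for $n\gg 0$ this estimate is immediate: $|d_k^{(n)}|$ lies in the coset $|x-y_n|^k\,|K^\times|$ of $|(K')^\times|/|K^\times|$, which has order dividing $p$ by Lemma~\ref{app:lembour}; since $1\le k\le p-1$ is prime to $p$, the cosets of $|x-y_n|^0,\dots,|x-y_n|^{p-1}$ are pairwise distinct, so the non-zero $|d_k^{(n)}|$ are pairwise distinct and hence $|m|=\max_k|d_k^{(n)}|$. A second easy situation is $\operatorname{dist}(x,V)=0$: then $|x-y_n|\to 0$, and as each $|y_n|\le 1$ the elements $\tfrac1{k!}P^{(k)}(y_n)$ are bounded, so $|d_k^{(n)}|\to 0$ for $k\ge 1$, whence $|d_0^{(n)}|\le|m|$ as well by the ultrametric inequality.

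The main obstacle is the remaining case: $|x-y_n|\in|K^\times|$ for infinitely many $n$ while $\operatorname{dist}(x,V)>0$. This occurs precisely for \emph{defect} extensions — where $|(K')^\times|=|K^\times|$ and $\kappa(V')=\kappa(V)$ but the distance may be positive — and there the coset argument is vacuous. To handle it one must use the approximation hypotheses more seriously: the property that $|x-y_n|$ eventually undercuts $|x-y|$ for every fixed $y\in V$; the fact that an approximation $y$ with $|x-y|\in|K^\times|$ can always be strictly improved (pick $\lambda\in K$ with $|\lambda|=|x-y|$, lift the residue of the unit $(x-y)/\lambda\in(V')^\times$ to some $c\in V$, and replace $y$ by $y+c\lambda$); and the precise relation $|x-y_n|/|w_n|\to 1$. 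The point I would aim to extract is that this configuration forces either $\operatorname{dist}(x,V)=0$, reducing to the easy case, or $|x-y_n|\notin|K^\times|$ for $n\gg 0$, reducing to the coset argument. Making this dichotomy airtight — i.e. excluding a pseudo-Cauchy pattern with positive limit distance whose gaps all lie in $|K^\times|$ — is the delicate heart of the proof, and is where Gabber's approximation method does the real work.
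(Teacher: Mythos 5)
Your preliminary reductions are correct and essentially reproduce the scaffolding of the paper's proof: $z_n\in V'$ gives $V[z_n]\subseteq V'$, the identity $z_n=\tfrac{y_{n+1}-y_n}{w_n}+\tfrac{w_{n+1}}{w_n}z_{n+1}$ with both coefficients in $V$ gives monotonicity, the decomposition $V'=V+\fm'$ reduces the claim to elements $m\in\fm'$, and the observation that $1,z_n,\dots,z_n^{p-1}$ is a $V$-basis of $V[z_n]$ reduces everything to the valuation estimate $|d_k^{(n)}|\le|m|$ for $n\gg0$, which is exactly Lemma~\ref{lem.keyval} in the paper (with $z_n=x-y_n$ rather than $(x-y_n)/w_n$, a harmless renormalization). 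Your two easy cases are also fine: the coset argument when $|x-y_n|$ generates $|(K')^\times|/|K^\times|\cong\Z/p$, and the ultrametric trick when $|x-y_n|\to0$.

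But the proposed route through the remaining case does not close the gap, and you are right to flag it. Your hope is that the defect situation ``forces either $\operatorname{dist}(x,V)=0$ \dots\ or $|x-y_n|\notin|K^\times|$ for $n\gg0$.'' This dichotomy is false: there are immediate (defect) extensions of degree $p$ in characteristic $p$ where $e=f=1$, so $|(K')^\times|=|K^\times|$ and $|x-y_n|\in|K^\times|$ for all $n$, and yet the distance of $x$ to $K$ is strictly positive (these are precisely the non-dense immediate extensions, the existence of which is the whole phenomenon of defect). The improvement trick you cite (lifting the residue of $(x-y)/\lambda$) only shows that any achieved distance with value in $|K^\times|$ can be strictly decreased; it does not prevent the infimum from being a positive real approached by a pseudo-Cauchy sequence of values all in $|K^\times|$. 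So the delicate case is not eliminable by a dichotomy, and some genuine work is required exactly there.

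The paper's proof of Lemma~\ref{lem.keyval} takes a different path, which is worth internalizing: it makes two field-extension reductions, each justified by the observation that a separable extension $E/K$ in which $x$ is not better approximable than in $K$ inherits all the hypotheses of the Proposition. Step~1 replaces $K$ by its henselization $K^h$ (valid because $K$ is dense in $K^h$). Step~2, now over a henselian base, replaces $K$ by the splitting field $E$ of the degree-$(p-1)$ polynomial with coefficients $a_k^{(n)}$; here $[E:K]$ is prime to $p$, and given $y_E\in E$ one sets $y_K=\tr(y_E)/[E:K]$ and uses henselianity (which gives $|\sigma z|=|z|$ for $\sigma\in\mathrm{Gal}(E/K)$) to show $|x-y_K|\le|x-y_E|$, so approximation does not improve. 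After these reductions the coefficient polynomials split into linear factors over $K$, and the estimate is then exactly \cite[Lemma~6.1.9]{GabberRamero}, which handles the defect case head-on. You should read that lemma: it is where the ``positive-distance, immediate'' configuration you worried about is actually dealt with, and it has no analogue in your proposal.
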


\begin{proof}
The fact that  the rings are increasing is easy and left to the reader. We show that any
element $v\in V'\setminus \{0 \}$ is contained
in $V[(x-y_n)/w_n]$ for $n\gg 0$. As $\kappa(V')=\kappa(V)$ we can assume without
loss of generality that $|v|<1$.
Set $z_n=x-y_n$ and write $v=a_0^{(n)} + \cdots +
a_{p-1}^{(n)}z_n^{p-1}$ with $a_0^{(n)},\ldots , a_{p-1}^{(n)}\in K $.

\begin{lemma} \label{lem.keyval}
  For $n\gg 0$ (depending on $v$) we have \[
    |v|=\max\{|a_0^{(n)}|, \ldots , |a_{p-1}^{(n)}z_n^{p-1} |   \} .
    \]
  \end{lemma}
  \begin{proof}
Observe that for a separable algebraic extension of valued fields $K\subset E$ we can without loss of
generality replace $K$ by $E$ and $K'$ by $E'=K'\otimes_K E$ in the proof of the lemma if the
element $x$ cannot be approximated closer by elements in $E$ than by elements in $K$.
Indeed, the
latter approximation property implies that the conditions of Proposition~\ref{prop.approx} also hold for the
extension $E'/E$.

\medskip

\noindent\textit{Step 1}: Replace $K$ by $K^h$ (henselization)\\[.5ex]
This is feasible because $K$ is dense in $K^h$. So given $y_E\in K^h$ find $y_K\in K$ with
$|y_E-y_K|<|y_E-x|$. Then $|x-y_E|=|x-y_K| $, so $x$ cannot be approximated closer in $K^h$
than in $K$.

\medskip

\noindent\textit{Step 2}: Replace $K$ by the splitting field $E$ of $ a_0^{(n)} + \cdots +
a_{p-1}^{(n)}X^{p-1}\in K[X]$\\[.5ex]
Note that the splitting field is independent of $n$ and that $d=[E:K]$ is prime to $p$.
Given $y_E\in E$ set $y_K=\tr (y_E)/d $. Then
\[
|x-y_K|= \left|\frac{1}{d} \sum_{\sigma \in \mathrm{Gal}(E/K)}  \sigma(x-y_E)  \right|\le |x-y_E|,
\]
so  $x$ cannot be approximated closer in $E$
than in $K$.
Here we used that $K$ is henselian which implies that $|\sigma z|=|z|$ for any $z\in E$
and $\sigma\in  \mathrm{Gal}(E/K)$.

\medskip

Now we can assume without loss of generality  that the polynomials $a_0^{(n)} + \cdots  + a_{p-1}^{(n)} X^{p-1}$ decompose into linear
factors over $K$. Then  Lemma~\ref{lem.keyval} is a consequence  of~\cite[Lemma~6.1.9]{GabberRamero}.
  \end{proof}

If we write
  \[
 v=a_0^{(n)} + (a_1 w_n) \frac{z_n}{w_n}  +\cdots +
(a_{p-1}^{(n)}w_n^{p-1}) (\frac{z_n}{w_n})^{p-1}
\]
the coefficients satisfy $|a_i^{(n)} w_n^i|\le (|w_n|/|z_n|)^i |v|$ for $n\gg 0$ by 
Lemma~\ref{lem.keyval}. But $$\lim_{n\to \infty}  (\frac{|w_n|}{|z_n|})^i|v| =|v|<1,$$
 so $v\in V[(x-y_n)/w_n]$ for $n\gg 0$.
\end{proof}

\bibliographystyle{amsalpha}
\bibliography{vorst}

\end{document}